\documentclass[11pt]{article}
\usepackage[margin=1.15in]{geometry}
\usepackage{amsmath,amssymb,amsthm,mathtools}
\usepackage{mathrsfs}
\usepackage{microtype}
\usepackage{enumitem}
\setlist{itemsep=2pt,topsep=4pt}
\usepackage[colorlinks=true,linkcolor=blue,citecolor=blue,urlcolor=blue]{hyperref}
\usepackage{float}

\newcommand{\lam}{\lambda}

\newcommand{\sh}{\sinh}

\newtheorem{theorem}{Theorem}[section]
\newtheorem{lemma}[theorem]{Lemma}
\newtheorem{proposition}[theorem]{Proposition}
\newtheorem{definition}[theorem]{Definition}
\newtheorem{remark}[theorem]{Remark}
\newtheorem{corollary}[theorem]{Corollary}

\newtheorem{example}[theorem]{Example}

\DeclareMathOperator{\Area}{Area}

\newcommand{\cB}{\mathcal B}
\newcommand{\cK}{\mathcal K}

\newcommand{\pair}[2]{\left\langle #1,#2\right\rangle}
\newcommand{\R}{\mathbb R}
\newcommand{\LLp}{\mathcal L^+}
\newcommand{\osp}{{\rm OSp}(1|2)}

\def\hyperp{{\rm I}\kern-.3ex{\rm H}}

\title{Regular  Additive Scalar Invariants of ${\mathcal N}=1$\\ Hyperbolic Spin Supertriangles}

\author{ Ilia Gaiur\\ Institut des Hautes \'Etudes Scientifiques\\ 35 route de Chartres, 91440 Bures-sur-Yvette, France\\ \texttt{gaiur@ihes.fr} \and Robert Penner\\ Institut des Hautes \'Etudes Scientifiques\\ 35 route de Chartres, 91440 Bures-sur-Yvette, France\\ \texttt{rpenner@ihes.fr} }

\date{\today}

\begin{document}
\maketitle

\begin{abstract}
\noindent We classify regular cyclic scalar invariants of non-degenerate hyperbolic
${\mathcal N}=1$ spin supertriangles which are additive under geodesic
subdivision.  After fixing the spin lift, the common-first gauge (introduced in
earlier work by the second-named author and further developed here) gives a
complete $3|4$ modulus of such supertriangles..  The space of additive scalar invariants is proved to have
superdimension $3|1$: ordinary hyperbolic area in degree zero, two even
generators in degree two, and one odd generator in degree three; the
degree-one and degree-four sectors vanish.  One degree-two generator is the
fermionic first variation of body area, equivalently the nilpotent correction
from ordinary body angle defect to super angle defect.  The degree-three
generator changes sign under reversal of the spin lift, while the other
generators are fixed, so ordinary supertriangles carry only
three even invariants.
\end{abstract}

%\tableofcontents

\section*{Introduction}

For ordinary hyperbolic triangles, the continuous isometry-invariant scalar
valuations which are additive under geodesic subdivision are up to overall scale
generated by hyperbolic area
\cite{Klain2006HyperbolicValuations}.  Angle defect 
$
\pi-(A+B+C)
$,
given by subtracting from pi the sum of interior angles $A,B,C$ at the respective vertices $P,Q,R$,
is also easily seen to be additive, and the classical Angle Defect Theorem identifies it with
area.  This uniqueness underlies Lambert's original proof \cite{Lambert} of this identity.

The situation is different for hyperbolic supertriangles.
The calculation in \cite{defect}, when expressed in a suitable gauge,
produced a non-vanishing degree-two correction between super angle defect and
area.  As explained in \cite{corrigendum}, the area form used in \cite{defect} is not
invariant under the odd part of $\osp$, so that comparison must be understood
gaugewise rather than as an identity of two global invariants.  The intrinsic
degree-two quantity is instead the difference between super angle defect and
body angle defect, or equivalently the fermionic first variation of body area discussed here.
In any case, this raises the question of finding all additive invariants of hyperbolic
supertriangles, which is solved in the present paper.
 
The common-first gauge developed below makes the first fermionic coordinate
of the three vertices equal. In this gauge, the four odd
parameters are
$
(\lambda,\beta_P,\beta_Q,\beta_R),$
and the central fermionic involution acts by
\[
\iota:
(\lambda,\beta_P,\beta_Q,\beta_R)
\longmapsto
(-\lambda,-\beta_P,-\beta_Q,-\beta_R).
\]
We principally work with
\emph{spin supertriangles}, meaning supertriangles together with a fixed
choice of signs on the fermions.  Ordinary supertriangles are obtained by quotienting by
$\iota$.
There is the following essential point about spin structure: 
no nonzero odd scalar invariant can be defined on the quotient which forgets this sign
since a homogeneous expression of fermionic degree $k$ is multiplied by
$(-1)^k$.

 After the spin lift is fixed, the resulting
four signed fermions together with the three even invariants given by pairwise 
distances between vertices $P,Q,R$ form a
complete $3|4$ modulus of hyperbolic spin supertriangles.  Every scalar function therefore has a finite
expansion 
\[
F=F_0+F_1+F_2+F_3+F_4
\]
by total fermionic degree, where $F_k$ is a linear combination of homogeneous expressions
with coefficients given by functions of the the bodies of the three pairwise vertex distances,
and the classification can be carried out fermionic degree by degree in these coordinates. 
 
Our main result is that any such additive scalar invariant is given by
\begin{equation}\label{eq:mainclass-main}
{
F=\mathsf A\,\Area+\mathsf B\,J
  +\mathsf C\,\cB_2+\mathsf D\,\cK_3,
}
\end{equation}
where ${\mathsf A,\mathsf B,\mathsf C,\mathsf D}$ are constant even scalars,
$\Area$ is the ordinary hyperbolic area of the body triangle,
$J$ is its degree-two fermionic first variation, $\cB_2$ is another
degree-two additive invariant, and $\cK_3$ is a degree-three invariant.
The degree-one and degree-four sectors vanish, so the scalar invariant
space of spin supertriangles has superdimension $3|1$.  Moreover,
\[
\iota^*\Area=\Area,\qquad
\iota^*J=J,\qquad
\iota^*\cB_2=\cB_2,\qquad
\iota^*\cK_3=-\cK_3,
\]
so that the scalar invariants of ordinary supertriangles are exactly
the span of $\Area$, $J$, and $\cB_2$.  We also show that $J$ is precisely
the intrinsic fermionic correction
$
J=D_{\rm super}-D_{\rm body}
$
which was computed in
\cite{defect} in the common-first gauge.

The proof has two main ingredients.  First, we determine explicitly how the
common-first coordinates of the two daughter triangles transform when a side
is subdivided.  Second, these transport formulas convert additivity into
functional equations for the coefficient functions in each fermionic degree.
With our approach of computing degree by degree and solving the resulting functional equations, the 
proof reduces to involved but elementary manipulations in each degree
of ordinary body variables using Newton’s calculus.

Degree zero reduces to the classical valuation theorem; degree one vanishes;
degree two splits into a projected $\beta\beta$ sector and a
$\lambda\beta$ kernel, respectively producing $\cB_2$ and $J$; degree three has a
one-dimensional cubic projection whose unique additive completion is
$\cK_3$; and degree four again vanishes.  The only analytic hypotheses
are
one-sided regularity conditions at degenerate side faces stated in the
corresponding degree arguments,
ranging from ${\mathcal C}^0$ for degrees zero and four, to ${\mathcal C}^1$ for degrees one and three,
and ${\mathcal C}^2$ for degree two. 

The present result concerns scalar triangle valuations, not the full
two-dimensional super scissors-congruence problem.  Scalar invariants do not
separate all spin supertriangles, so further, potentially module-valued,
invariants should enter the general theory.  We shall return to this broader
two-dimensional problem in a sequel.  Beyond it lie the analogous questions
for super hyperbolic tetrahedra and for scissors-congruence groups of higher
dimensional super polyhedra, in direct analogy with the classical questions
originating in Hilbert's third problem
\cite{Hilbert1900,Dehn1900,Sydler1965,Zakharevich2016}.
We hope that the concrete two-dimensional results obtained here might
entice others to study these questions.

\subsubsection*{Acknowledgements and usage of AI} We thank T. Damour, V. Rubtsov, G. Smith and A. M. Zeitlin for discussions.

\noindent
The authors utilized AI agents (ChatGPT) during the project as proof assistants to support computational routines and to discuss mathematical steps. However, all pivotal ideas and approaches were made by the authors and shaped during their discussions. AI was also employed for literature review, image recognition (transferring blackboard formulas and figures into LaTeX), and code verification. After using these services, the authors thoroughly reviewed and edited the content, and they take full responsibility for the final published article.

\section{Background and notation}

We recall only the super-hyperbolic geometry needed for the classification.
Our conventions follow \cite{defect,3d}.  In particular, we use the
super-Minkowski model of the hyperbolic plane, the natural
$\operatorname{OSp}(1|2)$ action, the superdistance and supergeodesic
parametrization, and the central fermionic involution.  The latter will be
responsible for the distinction between spin supertriangles and ordinary
supertriangles.

The {\it body} of a supernumber $x\in\hat{\mathbb R}=\hat{\mathbb R}[0]\otimes\hat{\mathbb R}[1]$ is denoted $x_\#\in{\mathbb R}$.  
An element $g$ in the {\it orthosymplectic group} $\osp$ can be represented by the 3-by-3 matrix $g=\begin{psmallmatrix}
a&b&\alpha\\c&d&\beta\\
\gamma&\delta&f\\\end{psmallmatrix}$, where $a,b,c,d,f\in \hat{\mathbb R}[0]$ are even and $\alpha,\beta,\gamma,\delta\in \hat{\mathbb R}[1]$ are odd, satisfying
\begin{eqnarray}\nonumber
&&\alpha=b\gamma-a\delta, \quad \beta =d\gamma-c\delta, \quad \hskip 2.8ex f=1+\alpha\beta,\nonumber\\
&&\gamma=a\beta-c\alpha, \quad \delta=b\beta-d\alpha, \quad f^{-1}=ad-bc,\nonumber
\end{eqnarray}
and matrix multiplication is defined by
$$\biggl ( \begin{smallmatrix}
a_1&b_1&\alpha_1\\c_1&d_1&\beta_1\\\gamma_1&\delta_1&f_1\\
\end{smallmatrix}\biggr )
\biggl ( \begin{smallmatrix}
a_2&b_2&\alpha_2\\c_2&d_2&\beta_2\\\gamma_2&\delta_2&f_2\\
\end{smallmatrix}\biggr )=
\biggl (\begin{smallmatrix}
a_1a_2+b_1c_2-\alpha_1\gamma_2&a_1b_2+b_1d_2-\alpha_1\delta_2&a_1\alpha_2+b_1\beta_2+\alpha_1 f_2\\
c_1a_2+d_1c_2-\beta_1\gamma_2&c_1b_2+d_1d_2-\beta_1\delta_2&c_1\alpha_2+d_1\beta_2+\beta_1 f_2\\
\gamma_1 a_2+\delta_1 c_2+\delta_1\gamma_2&\gamma_1 b_2+\delta_1 d_2+f_1 \delta_2&-\gamma_1\alpha_2-\delta_1\beta_2+f_1f_2\\
\end{smallmatrix}\biggr ).$$
The unusual signs arise from our choice of blocking the even and odd matrix entries.

The {\it super Minkowski pairing} arising from the Killing form on $\osp$ is denoted \(\pair{\cdot}{\cdot}\), where 
$$\langle X,X\rangle=x_1x_2-x^2+2\phi\psi,$$
for $X=(x_1,x_2,x~|~\phi,\psi)$ in super Minkowski space  
$$\R^{2,1|2}=\{(x_1,x_2,x~|~\phi,\psi):x_1,x_2,x\in \hat{\mathbb R}[0], \phi,\psi\in \hat{\mathbb R}[1]\}.$$  The action of \(g\in\osp\) on $X\in\R^{2,1|2}$ written as $X=\begin{psmallmatrix} ~~x_1&~~x&~\phi\\~~x&~x_2&~\psi\\-\phi&-\psi&~0\\
\end{psmallmatrix}$ via $X\mapsto g^{st} X g$
preserves this pairing, where $g^{st}$ denotes the supertranspose.
The subgroup 
$$\widehat{\rm SL}(2,{\mathbb R})=
\biggl \{\begin{pmatrix}a&b&0\\c&d&0\\0&0&1\\\end{pmatrix}\in{\rm OSp}(1{\rm |}2):
a,b,c,d\in\hat{\mathbb R}[0]\biggr\}$$
is intermediary between the classical ${\rm SL}(2,{\mathbb R})$
and the full ${\rm OSp}(1{\rm |}2)$.  
The nontrivial central lift of the
identity Lorentz transformation acts trivially on the even coordinates and
reverses both fermionic coordinates.  We denote the induced involution by
\[
\iota(x_1,x_2,x\mid\phi,\psi)
=(x_1,x_2,x\mid-\phi,-\psi).
\]
It is precisely the deck transformation which exchanges the two spin choices
used below.  

Another useful subspace, which is not a subgroup of
${\rm OSp}(1|2)$, is defined by setting $$u(\xi,\eta)=\begin{pmatrix}1-{{\xi\eta}\over 2}&0&\xi\\0&1-{{\xi\eta}\over 2}&\eta\\\eta&-\xi&1+\xi\eta\\\end{pmatrix}\in{\rm OSp}(1|2),
~{\rm for}~\xi,\eta\in\hat{\mathbb R}[1].$$

\begin{lemma}\label{factor}
Any element $g\in{\rm OSp}(1|2)$ can be written uniquely as a product
$$g=\begin{psmallmatrix}a&b&0\\c&d&0\\0&0&1\\\end{psmallmatrix}\,u(\xi,\eta)=u(a\xi+b\eta,c\xi+d\eta)\, \begin{psmallmatrix}a&b&0\\c&d&0\\0&0&1\\\end{psmallmatrix},$$
for some $\begin{psmallmatrix}a&b&0\\c&d&0\\0&0&1\\\end{psmallmatrix}\in \widehat{\rm SL}(2,{\mathbb R})$ and fermions $\xi,\eta\in\hat{\mathbb R}[1]$.
%where
%$$u(\alpha,\beta)=\begin{pmatrix}1-{{\alpha\beta}\over 2}&0&\alpha\\0&1-{{\alpha\beta}\over 2}&\beta\\\beta&-\alpha&1+\alpha\beta\\\end{pmatrix}\in{\rm OSp}_{\mathbb C}(1|2).$$\hfill\qedsymbol
\end{lemma}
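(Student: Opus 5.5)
The plan is to read off $\xi,\eta$ from the odd entries of $g$, recover the $\widehat{\rm SL}(2,\mathbb R)$ factor as $g\,u(\xi,\eta)^{-1}$, and then obtain the second factorization by conjugating $u$ past that factor.

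\emph{Existence and uniqueness of the first factorization.} First I multiply out $h\,u(\xi,\eta)$ with $h=\begin{psmallmatrix}a&b&0\\c&d&0\\0&0&1\end{psmallmatrix}$, using the displayed product rule. Since the third row of $h$ is $(0,0,1)$ and its odd entries vanish, this gives
\[
h\,u(\xi,\eta)=\begin{pmatrix}(1-\tfrac{\xi\eta}{2})a&(1-\tfrac{\xi\eta}{2})b&a\xi+b\eta\\(1-\tfrac{\xi\eta}{2})c&(1-\tfrac{\xi\eta}{2})d&c\xi+d\eta\\ \eta&-\xi&1+\xi\eta\end{pmatrix}.
\]
For $g=\begin{psmallmatrix}a'&b'&\alpha\\c'&d'&\beta\\\gamma&\delta&f\end{psmallmatrix}$, matching the bottom row forces $\eta=\gamma$ and $\xi=-\delta$, so the fermions are uniquely determined by $g$. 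Because $1-\xi\eta/2$ is invertible with inverse $1+\xi\eta/2$, the even block is then forced as well: $(a,b,c,d)=(1+\tfrac{\xi\eta}{2})(a',b',c',d')$. This proves uniqueness.

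For existence I define $\xi,\eta,a,b,c,d$ by these formulas and check three things.
\begin{itemize}
\item The determinant: $ad-bc=(1+\xi\eta)(a'd'-b'c')=(1+\xi\eta)f^{-1}$. From the defining relations, $\alpha\beta=\xi\eta$ (computed below), so $f=1+\alpha\beta=1+\xi\eta$ and the determinant equals $1$. Hence $h\in\widehat{\rm SL}(2,\mathbb R)$.
\item The upper-right odd entries: I need $a\xi+b\eta=\alpha$ and $c\xi+d\eta=\beta$. With $\xi=-\delta$ and $\eta=\gamma$, the left side of the first is $(1+\tfrac{\xi\eta}{2})(b'\gamma-a'\delta)$. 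By the relation $\alpha=b\gamma-a\delta$ this equals $(1+\tfrac{\xi\eta}2)\alpha$. The correction vanishes because $\xi\eta\alpha$ is cubic in odd quantities built from $\gamma,\delta$: indeed $\alpha$ lies in the span of $\gamma,\delta$, and $\gamma^2=\delta^2=0$. The second entry is handled the same way.
\item The $(3,3)$ entry: $1+\xi\eta=1-\delta\gamma=1+\gamma\delta$. I will show that $\alpha\beta=\gamma\delta$ using $\alpha=b'\gamma-a'\delta$ and $\beta=d'\gamma-c'\delta$:
\[
\alpha\beta=(b'\gamma-a'\delta)(d'\gamma-c'\delta)=-b'c'\gamma\delta-a'd'\delta\gamma=(a'd'-b'c')\gamma\delta.
\]
Since $\gamma\delta$ is nilpotent and $(a'd'-b'c')_\#=1$, this gives $\alpha\beta=\gamma\delta$ up to a term of the form $(f^{-1}-1)\gamma\delta$. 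Here $f^{-1}-1=-\alpha\beta+\dots$ is itself a multiple of $\gamma\delta$, and $(\gamma\delta)^2=0$, so that term vanishes.
\end{itemize}
Therefore $g=h\,u(\xi,\eta)$.

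\emph{Second equality.} I will compute $u(a\xi+b\eta,\,c\xi+d\eta)\,h$ directly and compare it with $h\,u(\xi,\eta)$ entry by entry.
\begin{itemize}
\item The odd column is immediate, since right multiplication by $h$ does not change the third column.
\item The even block becomes $(1-\tfrac{\xi'\eta'}{2})\begin{psmallmatrix}a&b\\c&d\end{psmallmatrix}$, where $\xi'=a\xi+b\eta$ and $\eta'=c\xi+d\eta$. One checks $\xi'\eta'=(ad-bc)\xi\eta=\xi\eta$.
\item The $(3,3)$ entry agrees for the same reason.
\item The bottom row is $(\eta'a-\xi'c,\ \eta'b-\xi'd)=(\eta,-\xi)$, again using $ad-bc=1$.
\end{itemize}
An equivalent way to see this is that $h$ acts on the fermionic pair through its standard $2$-dimensional representation.

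\emph{Main obstacle.} The only delicate part is the sign bookkeeping in the Grassmann algebra, specifically the identity $\alpha\beta=\gamma\delta$ and the invertibility argument. I would handle it by working modulo nilpotents of increasing order and using the defining relations of $\osp$ repeatedly.
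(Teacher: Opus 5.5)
Your proposal is correct, and it is the direct algebraic verification that the paper leaves as an elementary exercise. You read $\eta=\gamma$ and $\xi=-\delta$ off the bottom row, rescale the even block by $1+\xi\eta/2$, close the existence checks with $\alpha\beta=(a'd'-b'c')\gamma\delta=\gamma\delta$, and get the second equality from $\xi'\eta'=(ad-bc)\xi\eta=\xi\eta$. Your $(3,1)$ entry $\eta$ in $h\,u(\xi,\eta)$ relies on the term $f_1\gamma_2$, which is the intended product rule; the paper prints $\delta_1\gamma_2$ there, a typo that would wrongly give $0$.
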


\begin{proof} This purely algebraic proof is left as an elementary exercise.
\end{proof}

The {\it super hyperbolic plane}
$$\hyperp=\{X\in \R^{2,1|2}:\langle X,X\rangle=1~{\rm and}~(x_1+x_2)_\#\geq 0\}$$ and the {\it (positive) super light cone}
 $$\LLp=\{X\in \R^{2,1|2}:\langle X,X\rangle=0~{\rm and}~(x_1+x_2)_\#\geq 0\}$$
 are each invariant under the action of $\osp$. We shall also parenthetically require the \emph{(positive) special light cone}
\[
\mathcal L_0^+
=
\operatorname{OSp}(1|2)\cdot(1,0,0\mid0,0)
\subset\mathcal L^+.
\]
only in a final example.
Unlike the ordinary light cone, the positive super light cone is
not a single $\operatorname{OSp}(1|2)$ orbit; see \cite{PZ,IPZ} for the relevance to super Teichm\"uller theory.  

The full $\osp$-invariant superdistance $\widehat d$ on $\hyperp$ satisfies
\[
  \cosh \widehat d(P,Q)=\pair{P}{Q}.
\]
Its body is the ordinary hyperbolic distance
\[
  d(P,Q)=d(P_\#,Q_\#)=\bigl(\widehat d(P,Q)\bigr)_\#.
\]
The ordinary law of cosines was shown to hold verbatim for the full even superdistances in $\hyperp$.
Earlier work \cite{defect} relied on a two-form on $\hyperp$ with primitive
$
\frac{y(1+\varphi\psi)}{2}\,{\rm d}\log\left(\frac{x_1}{x_2}\right),
$
which is not invariant under the odd part of $\osp$ and is not used
in the present paper.  The Area considered here is the usual
hyperbolic area of the underlying body with primitive $\frac{y}{2}\,{\rm d}\log\left(\frac{x_1}{x_2}\right)$.

A supergeodesic in $\hyperp$ is determined by two points $U,V\in\LLp$ with $\langle U,V\rangle=2$ and has the even-parameter form
\[
X(\widehat s)=\frac12\left(e^{\widehat s}U+e^{-\widehat s}V\right),
\qquad \widehat s\in\hat{\mathbb R}[0].
\]
Its body
$
s=(\widehat s)_\#\in\mathbb R
$
is ordinary signed arc length along the reduced geodesic, and
\[
\langle X(\widehat s),X(\widehat t)\rangle
=\cosh(\widehat s-\widehat t).
\]
For a finite geodesic segment from $P$ to $Q$ of full even length
$
\widehat c=c+\Delta c
$ with $c=(\hat c)_\#$,
we choose the origin so that $P=X(0)$ and $Q=X(\widehat c)$ and use the canonical affine real parameter $t\in[0,1]$,
where
$\widehat s(t)=t\,\widehat c$.
The body arc length is therefore $s=tc$, while the nilpotent part of the full parameter is $t\,\Delta c$.  In particular, a real body cut position $s\in[0,c]$ determines the corresponding full cut point on the segment.  The full superdistance may therefore have a nilpotent even part, whereas its body is the ordinary real hyperbolic distance.  Henceforth, an un-hatted distance $d$ denotes ordinary distance between the bodies, while a hatted distance denotes the corresponding full even superdistance.

\section{The common-first gauge and moduli of spin supertriangles}
\label{sec:common}

The common-first gauge is the coordinate system used throughout the paper.
Its construction removes two odd degrees of freedom in
$\operatorname{OSp}(1|2)$ by making the first fermionic coordinates of the
three vertices equal.  What remains is a simultaneous sign ambiguity coming
from the central spin involution.  Fixing that sign produces four signed odd
coordinates.  We first derive the common-first gauge and its subdivision law,
and then prove that these four fermions together with the three even pairwise inner products of vertices
form a complete $3|4$ supertriangle modulus.
 
\subsection{Common-first triangles and side-invariants}
\label{sec:common-first}
We begin with the odd normal form.  The common-first fermion will later play
two distinct roles: it is one of the four odd moduli of the triangle, and it
obeys a particularly simple law under subdivision of a distinguished side.

Suppose that
\[
P=(p_1,p_2,p ~|~\alpha,\beta),\quad
Q=(q_1,q_2,q ~|~\gamma,\delta),\quad
R=(r_1,r_2,r ~|~\epsilon,\phi)
\]
span a non-degenerate supertriangle in \(\hyperp\), i,.e.,\, the underlying
bodies $P_\#,Q_\#,R_\#$ are noncollinear in the hyperbrolic plane.

\begin{lemma}[Common-first gauge lemma]
{\rm \cite[Lemma 6]{defect} ~\cite[Lemma 5.1]{3d}}
\label{lemma:corrected-common-first-fermion}\\
For a generic triple \(P,Q,R\) in \(\hyperp\), there exist unique odd
parameters \(\xi,\eta\) such that the pure odd transformation
\(u(\xi,\eta)\) from Lemma \ref{factor} makes the first fermionic coordinates of the three
vertices agree.  The resulting common-first fermionic coordinate is
\[\begin{aligned}
\lambda(P,Q,R)&=
p_1\xi+p\eta+\alpha
+
{3\over2}{\alpha\gamma\epsilon\over\det}\\
&=
q_1\xi+q\eta+\gamma
+
{3\over2}{\alpha\gamma\epsilon\over\det}\\
&=
r_1\xi+r\eta+\epsilon
+
{3\over2}{\alpha\gamma\epsilon\over\det},
\end{aligned}
\]
where
\[
\begin{pmatrix}
\xi\\
\eta
\end{pmatrix}
=
{1\over \det}
\begin{pmatrix}
q-r & q-p\\
r_1-q_1 & p_1-q_1
\end{pmatrix}
\begin{pmatrix}
\gamma-\alpha\\
\epsilon-\gamma
\end{pmatrix}
\]
and
\[
\det=p_1(q-r)+q_1(r-p)+r_1(p-q).
\]
Furthermore, the respective second fermions are given by
\[
\begin{aligned}
\mu_P
&=
p\xi+p_2\eta+\beta+{3\over2}\beta\xi\eta,\\
\mu_Q
&=
q\xi+q_2\eta+\delta+{3\over2}\delta\xi\eta,\\
\mu_R
&=
r\xi+r_2\eta+\phi+{3\over2}\phi\xi\eta.
\end{aligned}
\]
\end{lemma}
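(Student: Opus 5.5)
The plan is a direct computation in three steps: compute how $u(\xi,\eta)$ acts on a single point, observe that the cubic correction is the same for all three vertices, and then solve a $2\times 2$ linear system.

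\emph{Step 1: the action on one point.} For a single point $X=(x_1,x_2,x\mid\varphi,\psi)\in\hyperp$, write $X$ as the $3\times 3$ supermatrix of the background section and multiply out $u(\xi,\eta)^{st}\,X\,u(\xi,\eta)$ using the multiplication rule recalled there. I would read off the transformed fermions in the form
\[
\varphi'=x_1\xi+x\eta+\varphi+\tfrac32\varphi\xi\eta,\qquad
\psi'=x\xi+x_2\eta+\psi+\tfrac32\psi\xi\eta .
\]
The second formula gives the stated $\mu_P,\mu_Q,\mu_R$ at once. The coefficient $\tfrac32$ comes from combining the $1-\tfrac{\xi\eta}{2}$ diagonal entries of $u$ with the $1+\xi\eta$ entry and the cross terms $\eta\cdot(-\xi)$. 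Terms such as $x_1\xi\xi\eta$ vanish since $\xi^2=0$. Contributions like $\varphi\psi\eta$ should cancel or be absorbed using the constraint $\pair{X}{X}=1$. This step is where I expect the most sign bookkeeping.

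\emph{Step 2: the cubic terms agree.} Requiring $\alpha'=\gamma'=\epsilon'$ means the linear parts $p_1\xi+p\eta+\alpha$, $q_1\xi+q\eta+\gamma$, $r_1\xi+r\eta+\epsilon$ must agree up to the cubic corrections $\tfrac32\alpha\xi\eta$, $\tfrac32\gamma\xi\eta$, $\tfrac32\epsilon\xi\eta$. I would first solve the purely linear system
\[
(p_1-q_1)\xi+(p-q)\eta=\gamma-\alpha,\qquad (q_1-r_1)\xi+(q-r)\eta=\epsilon-\gamma .
\]
Its coefficient matrix has determinant equal to $\pm\det$. By Cramer's rule this gives exactly the displayed $(\xi,\eta)$.

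Next I compute $\xi\eta$ from that solution. Only the $2\times2$ determinant of the adjugate survives, because the odd vector $(\gamma-\alpha,\epsilon-\gamma)$ squares to zero componentwise. This yields $\xi\eta=\pm(\gamma-\alpha)(\epsilon-\gamma)/\det$. Multiplying by $\alpha$, $\gamma$ or $\epsilon$ and using nilpotency, $(\gamma-\alpha)(\epsilon-\gamma)=\gamma\epsilon-\alpha\epsilon+\alpha\gamma$ collapses in each case to $\alpha\gamma\epsilon$. Hence
\[
\alpha\xi\eta=\gamma\xi\eta=\epsilon\xi\eta=\pm\,\alpha\gamma\epsilon/\det .
\]
So the cubic correction is vertex-independent, and the linear solution solves the full nonlinear system exactly. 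Fixing the overall sign, which must be checked against the computation in Step 1, gives the stated $\lambda$ with the term $\tfrac32\alpha\gamma\epsilon/\det$ in all three expressions.

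\emph{Step 3: uniqueness.} Suppose $(\xi,\eta)$ is any solution. Its cubic terms are then again common to the three vertices, by the same nilpotency argument applied to $\xi\eta$ computed from the linear equations. So any solution must satisfy the linear system above. Genericity, here non-collinearity of the bodies, means $\det_\#\neq0$, so $\det$ is invertible in $\hat{\mathbb R}[0]$ and the linear solution is unique.

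The main obstacle is Step 1: getting the signs and the exact coefficient $\tfrac32$ right in the supertranspose product, and confirming that no extra $\varphi\psi$-type terms spoil the form.
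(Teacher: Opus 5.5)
Your proposal is correct and follows the paper's proof essentially step by step. The paper also computes $\alpha'=p_1\xi+p\eta+\alpha(1+\tfrac32\xi\eta)$; the would-be cross terms such as $\eta\psi\eta$ simply vanish because $\eta^2=0$, so the hyperboloid constraint is never needed. It then uses the same nilpotency observation to replace the nonlinear system by the linear one, solves that system by Cramer's rule, and substitutes $\xi\eta\det=(\gamma-\alpha)(\epsilon-\gamma)$, whose sign is $+$, to obtain the cubic term $\tfrac32\alpha\gamma\epsilon/\det$.

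The one step to tighten is uniqueness. Appealing to $\xi\eta$ ``computed from the linear equations'' presupposes the conclusion. Instead, multiply $\alpha'=\gamma'$ and $\alpha'=\epsilon'$ by $\xi\eta$ to get $\alpha\xi\eta=\gamma\xi\eta=\epsilon\xi\eta$ for any solution. Every solution therefore satisfies the linear system, and it is unique because $\det$ has nonzero body.
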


\begin{proof}
Given odd parameters \(\xi,\eta\) in the notation of
Lemma~\ref{factor}, write
\[
u(\xi,\eta)\cdot P
=
(p_1',p_2',p'\mid\alpha',\beta'),
\]
and denote the corresponding first fermionic coordinates of
\(u(\xi,\eta)\cdot Q\) and \(u(\xi,\eta)\cdot R\) by
\(\gamma'\) and \(\epsilon'\), respectively.  One computes directly that
\[
\begin{aligned}
\alpha'
&=
p_1\xi+p\eta+\alpha
\left(1+{3\over2}\xi\eta\right),\\
\gamma'
&=
q_1\xi+q\eta+\gamma
\left(1+{3\over2}\xi\eta\right),\\
\epsilon'
&=
r_1\xi+r\eta+\epsilon
\left(1+{3\over2}\xi\eta\right).
\end{aligned}
\]

The conditions \(\alpha'=\gamma'\) and
\(\alpha'=\epsilon'\) are therefore equivalent to
\[
\begin{pmatrix}
p_1-q_1 & p-q\\
p_1-r_1 & p-r
\end{pmatrix}
\begin{pmatrix}
\xi\\
\eta
\end{pmatrix}
=
\left(1+{3\over2}\xi\eta\right)
\begin{pmatrix}
\gamma-\alpha\\
\epsilon-\alpha
\end{pmatrix}.
\]
Let
\[
\begin{pmatrix}
\xi_0\\
\eta_0
\end{pmatrix}
=
\begin{pmatrix}
p_1-q_1 & p-q\\
p_1-r_1 & p-r
\end{pmatrix}^{-1}
\begin{pmatrix}
\gamma-\alpha\\
\epsilon-\alpha
\end{pmatrix}.
\]
Since \(\xi_0\) and \(\eta_0\) are linear combinations of the two odd
elements \(\gamma-\alpha\) and \(\epsilon-\alpha\), one has
\[
\xi_0\eta_0\xi_0
=
\xi_0\eta_0\eta_0
=
0,
\]
and hence
\[
\left(1+{3\over2}\xi_0\eta_0\right)\xi_0=\xi_0,
\qquad
\left(1+{3\over2}\xi_0\eta_0\right)\eta_0=\eta_0.
\]
It follows that the preceding nonlinear system has precisely the same
solution as the linear system
\[
\begin{pmatrix}
p_1-q_1 & p-q\\
p_1-r_1 & p-r
\end{pmatrix}
\begin{pmatrix}
\xi\\
\eta
\end{pmatrix}
=
\begin{pmatrix}
\gamma-\alpha\\
\epsilon-\alpha
\end{pmatrix}.
\]

Its solution is
\[
\begin{aligned}
\xi
&=
{1\over\det}
\left[
(p-r)(\gamma-\alpha)
+
(q-p)(\epsilon-\alpha)
\right],\\
\eta
&=
{1\over\det}
\left[
(r_1-p_1)(\gamma-\alpha)
+
(p_1-q_1)(\epsilon-\alpha)
\right],
\end{aligned}
\]
where
\[
\begin{aligned}
\det
&=
(p-r)(p_1-q_1)
-
(p-q)(p_1-r_1)\\
&=
(qp_1-pq_1)
+
(rq_1-qr_1)
+
(pr_1-rp_1)
\end{aligned}
\]
has nonzero body by genericity.  Thus, the odd parameters
\(\xi,\eta\) and hence the pure odd normalizing transformation
\(u(\xi,\eta)\) as well are uniquely determined.
The displayed formulas for
\(\lambda,\mu_P,\mu_Q,\mu_R\) now follow by direct substitution using
$\xi\eta\,\det
=
(\gamma-\alpha)(\epsilon-\alpha).
$
\end{proof}

\begin{remark}[Odd uniqueness and the residual spin freedom]
\label{rem}
{\rm The uniqueness established in the preceding proof is the uniqueness of the
odd normalizing transformation $u(\xi,\eta)$ once the even frame has
been fixed. Indeed, by Lemma~\ref{factor}, a generic element of
$\osp$ may be written in the form
$$
g=A\circ u(\xi,\eta),
$$
where $A$ belongs to the even subgroup. The common-first equations
determine $\xi$ and $\eta$ uniquely, but they do not determine $A$.
Thus, the common-first condition removes the odd freedom while leaving the
ordinary even Lorentz freedom.

\bigskip

There is a further point which is essential for the interpretation of the upcoming
classification theorem as follows.  The even subgroup of $\osp$ is the spin double cover
\[
\widehat{\mathrm{SL}}(2,\mathbb R)
\longrightarrow
\mathrm{SO}^{+}(2,1)\,,
\]
so an oriented and time-oriented Lorentz transformation of the body
vectors has two spin lifts.  They induce the same transformation on the even
vectors but differ by the central element which reverses all spinor
coordinates simultaneously.  Thus,
\[
(\lambda,\mu_P,\mu_Q,\mu_R)
\longmapsto
(-\lambda,-\mu_P,-\mu_Q,-\mu_R)
\]
is the
involution which exchanges the two spin choices.  Once one choice is retained,
the four common-first fermions are genuine signed coordinates.}
\end{remark}

\begin{definition}[Spin supertriangle]\label{def:spin-supertriangle}
{\rm A \emph{spin supertriangle} is an oriented and time-oriented supertriangle
together with a fixed lift to \(\widehat{\mathrm{SL}}(2,\mathbb R)\) of the
oriented Lorentz frame determined by its body.  On the generic common-first
stratum this is equivalently a choice of the simultaneous sign of
\(\lambda,\mu_P,\mu_Q,\mu_R\).  Forgetting this choice identifies the two
signed coordinate systems by the involution
\[
\iota:(\lambda,\mu_P,\mu_Q,\mu_R)
\longmapsto
(-\lambda,-\mu_P,-\mu_Q,-\mu_R).
\]
All common-first transports between a parent triangle and its subtriangles are
henceforth taken with the spin lifts induced from the parent.}
\end{definition}

\begin{corollary}
For a spin supertriangle, the signed common-first fermion
\[
\lambda(P,Q,R)
=
\lambda(Q,R,P)
=
\lambda(R,P,Q)
\]
is cyclic.  After forgetting the spin choice, this identity is defined only
modulo the common overall sign of
$\lambda,\mu_P,\mu_Q,\mu_R$.
\end{corollary}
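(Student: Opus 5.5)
The corollary follows from the uniqueness assertion in Lemma~\ref{lemma:corrected-common-first-fermion}, not from manipulating the explicit formula for $\lambda$. The formula singles out $\alpha$ and the differences $\gamma-\alpha$, $\epsilon-\alpha$, so it is not visibly cyclic, and checking invariance term by term under $(P,Q,R)\mapsto(Q,R,P)$ would be unpleasant. Instead, observe that the defining property is symmetric in the vertices: $u(\xi,\eta)$ is the unique pure odd transformation making the first fermionic coordinates of $u\cdot P$, $u\cdot Q$, $u\cdot R$ equal. This property concerns the unordered set $\{P,Q,R\}$.

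\textbf{Step 1: the normalizing transformation is the same for every ordering.} Apply the lemma to the cyclically permuted triple $(Q,R,P)$. It produces odd parameters $(\xi',\eta')$ for which $u(\xi',\eta')$ equalizes the first fermions of $Q,R,P$. The pair $(\xi,\eta)$ obtained from $(P,Q,R)$ also has this property. Genericity (nonzero body of $\det$) is unchanged under permutation, since $\det$ is an alternating expression in the vertices and a cyclic permutation is even. Uniqueness therefore gives $(\xi',\eta')=(\xi,\eta)$. For completeness, one can also check directly that the linear system in the proof, written for $(Q,R,P)$, is obtained from the original one by an invertible row transformation. Indeed, the conditions $\gamma'=\epsilon'$ and $\gamma'=\alpha'$ are equivalent to $\alpha'=\gamma'$ and $\alpha'=\epsilon'$.

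\textbf{Step 2: the common value is the same.} With the same $u(\xi,\eta)$, the common first fermion is the first fermionic coordinate of $u\cdot P$, which equals that of $u\cdot Q$ and of $u\cdot R$. This value does not depend on which vertex is listed first. The three displayed expressions in the lemma already exhibit this: $\lambda$ equals $p_1\xi+p\eta+\alpha+\tfrac32\alpha\gamma\epsilon/\det$ and the analogues for $Q$ and $R$. The one remaining check is that the correction term $\tfrac32\alpha\gamma\epsilon/\det$ is cyclically invariant. The product $\alpha\gamma\epsilon$ of odd elements picks up the sign of the permutation, and so does $\det$. A cyclic permutation is even, so both are unchanged and so is their ratio. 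Here a cyclic permutation is essential. For a transposition both factors change sign, so the ratio would still be invariant, but orientation is reversed, which matters for the spin lift and is excluded by the statement.

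\textbf{Step 3: the spin caveat.} The spin lift of the oriented Lorentz frame is determined by the oriented body triangle, and a cyclic relabeling preserves orientation. The same lift is therefore retained, and the equality holds as an identity of signed fermions. If the spin choice is forgotten, then by Remark~\ref{rem} the two lifts differ by the central element. That element negates $\lambda,\mu_P,\mu_Q,\mu_R$ simultaneously, so the identity holds only modulo this common sign.

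The main obstacle is in Step 2, in making sure the equalizing property, and not the coordinate formula, defines $\lambda$. That amounts to tracking the sign conventions in the odd correction $\tfrac32\alpha\gamma\epsilon/\det$ via the parity argument above.
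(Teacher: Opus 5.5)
Your proof is correct and is essentially the paper's argument. The paper shows that the permuted linear system has the same solution $(\xi,\eta)$ by exhibiting the invertible row operation $\begin{pmatrix}0&1\\-1&-1\end{pmatrix}$, which you also note alongside your uniqueness argument; it then checks cyclic invariance of $\det$ and of $\alpha\gamma\epsilon$ by the same parity reasoning, and handles the spin caveat by the simultaneous sign reversal. Your added observation that $\lambda$ is literally the common first coordinate of $u(\xi,\eta)\cdot P$, $u(\xi,\eta)\cdot Q$ and $u(\xi,\eta)\cdot R$ turns the cubic-term verification into a consistency check rather than a necessary step, which is a slightly cleaner way to see it.
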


\begin{proof}
The linear part is cyclic by the same calculation as before.  Namely,
\(\lambda(P,Q,R)\) leads to the linear system
\[
\begin{pmatrix}
p_1-q_1 & p-q\\
q_1-r_1 & q-r
\end{pmatrix}
\begin{pmatrix}
\xi\\
\eta
\end{pmatrix}
=
\begin{pmatrix}
\gamma-\alpha\\
\epsilon-\gamma
\end{pmatrix},
\]
whereas \(\lambda(Q,R,P)\) leads to
\[
\begin{pmatrix}
q_1-r_1 & q-r\\
r_1-p_1 & r-p
\end{pmatrix}
\begin{pmatrix}
\xi\\
\eta
\end{pmatrix}
=
\begin{pmatrix}
\epsilon-\gamma\\
\alpha-\epsilon
\end{pmatrix}.
\]
These two systems are related via left multiplication by
\[
\begin{pmatrix}
~~0 & ~~1\\
-1 & -1
\end{pmatrix}.
\]
Hence they have the same solution \((\xi,\eta)\).  Moreover,
$
\det
=
p_1(q-r)+q_1(r-p)+r_1(p-q)
$
is cyclically invariant, while
$
\gamma\epsilon\alpha
=
\alpha\gamma\epsilon
$
because a cyclic permutation of three odd variables is an even
permutation.  Thus, the cubic correction is cyclic as well.

The signed common-first fermion is therefore cyclic once the spin lift
has been fixed.  If the spin lift is not fixed, the only remaining
ambiguity is the simultaneous sign reversal given by
$
(\lambda,\mu_P,\mu_Q,\mu_R)
\longmapsto
(-\lambda,-\mu_P,-\mu_Q,-\mu_R)
$,
so the cyclicity statement holds modulo this common overall sign.
\end{proof}

Consider an ordered non-degenerate hyperbolic supertriangle in the notation above.

\begin{lemma}\label{lem:bodyreduction}
Suppose the first fermionic coordinates of the points under consideration are
all scalar multiples of a single odd generator $\rho$.  Suppose further that
the even coordinates differ from their bodies only by nilpotent even corrections
containing a factor $\rho$.  Then in evaluating $\lambda(P,Q,R)$, all
these nilpotent even corrections may be discarded.
\end{lemma}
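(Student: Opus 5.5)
The proof comes from the explicit formula for $\lambda(P,Q,R)$ in Lemma~\ref{lemma:corrected-common-first-fermion}, together with the nilpotency $\rho^2=0$.

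Write each first fermion as $\alpha=a\rho$, $\gamma=c\rho$, $\epsilon=e\rho$, with even scalar coefficients $a,c,e$. The cubic term then contains $\alpha\gamma\epsilon=ace\,\rho^3=0$ and drops out. So
$$\lambda=p_1\xi+p\eta+\alpha,$$
where $(\xi,\eta)$ solves the linear system
$$\begin{pmatrix}p_1-q_1&p-q\\ p_1-r_1&p-r\end{pmatrix}\begin{pmatrix}\xi\\ \eta\end{pmatrix}=\begin{pmatrix}\gamma-\alpha\\ \epsilon-\alpha\end{pmatrix}.$$
The right-hand side is $\rho$ times an even vector. Hence $\xi$ and $\eta$ are both of the form $\rho\cdot(\text{even})$.

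Next I would split each even coordinate as $x=x_\#+\rho\,\omega_x$, where $\omega_x$ is odd; the hypothesis says every nilpotent correction carries a factor $\rho$. The matrix entries then have the form $M=M_\#+\rho N$. Its inverse is
$$M^{-1}=M_\#^{-1}-M_\#^{-1}\rho N M_\#^{-1}+O(\rho^2),$$
and this expansion is exact because $\rho^2=0$. Applying $M^{-1}$ to a right-hand side of the form $\rho v$, the correction term contributes $\rho N\rho\, v$. Moving the odd $\rho$ past the entries of $N$ only changes signs, so this is $\pm\rho^2(\cdots)=0$. Therefore $(\xi,\eta)$ coincides with the solution computed from the body matrix $M_\#$.

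The same argument handles the final substitution. In $p_1\xi+p\eta$ the corrections are $\rho\,\omega_{p_1}\xi$ and $\rho\,\omega_p\eta$. Each contains $\rho$ twice, once explicitly and once inside $\xi$ or $\eta$, so each vanishes. I also need to check that the determinant's body is unaffected and that $\det^{-1}$ is well defined; this follows from non-degeneracy of the body triangle.

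The main obstacle is bookkeeping of signs and ordering when odd coefficients of $\rho$ occur. I must make sure every vanishing product really contains $\rho$ twice, even though the coefficients of $\rho$ may themselves be odd. Since any product containing $\rho\cdots\rho$ equals $\pm\rho^2(\cdots)=0$ regardless of parity, I would state this once as a general observation: every surviving correction term contains at least two factors of $\rho$ and so vanishes. With that in place the result follows.
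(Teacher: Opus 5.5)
Your proposal is correct and is essentially the paper's argument: the cubic term dies since $\rho^3=0$, and every $\rho$-divisible correction to the even data ends up multiplying a quantity already divisible by $\rho$. The only difference is cosmetic: you expand the inverse coefficient matrix and then the outer coefficients $p_1,p$, whereas the paper writes the linear part as a single fraction whose numerator is linear in the first fermions and expands $1/(D_0+D_1)=1/D_0-D_1/D_0^2$.

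One small inaccuracy: invertibility of the body of $\det$ is the genericity hypothesis of Lemma~\ref{lemma:corrected-common-first-fermion} (implicit in $\lambda(P,Q,R)$ being defined), not a consequence of non-degeneracy. For instance, the three non-collinear body points $(1,1,0),(1,2,\pm1)$ lie on a horocycle with equal $x_1$-coordinate and give a vanishing body determinant.
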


\begin{proof}
The numerator of the linear term in \(\lambda(P,Q,R)\) is linear in the
first fermions and hence is divisible by \(\rho\), so multiplying
it by any nilpotent even correction divisible by \(\rho\) gives zero.
Similarly, write
\[
        \det=D_0+D_1,
\]
where \(D_0\) is the invertible body of \(\det\), and \(D_1\) is nilpotent
and divisible by \(\rho\). Thus, \(D_1^2=0\), and one has
\[
        \frac{N}{D_0+D_1}
        =
        \frac{N}{D_0}
        -
        \frac{ND_1}{D_0^2}.
\]
The second term vanishes because both \(N\) and \(D_1\) are divisible by
\(\rho\).
Furthermore, the cubic correction in \(\lambda(P,Q,R)\) vanishes whenever the
three first fermions are scalar multiples of the same odd generator
\(\rho\)., and hence only the body even coordinates contribute.
\end{proof}

\begin{lemma}\label{lem:onedim}
Let \(P,Q,R\) have bodies
\[
P_\#=(e^s,e^{-s},0),\qquad
Q_\#=(e^t,e^{-t},0),\qquad
R_\#=(r_1,r_2,r),
\]
with \(s\ne t\), and let their first fermionic coordinates
\(\theta_P,\theta_Q,\theta_R\) all be scalar multiples of one odd
generator. After fixing the spin lift, the signed common-first
fermion is
\begin{equation}\label{eq:onedim}
\lambda(P,Q,R)
=
\frac{e^t\theta_P-e^s\theta_Q}{e^t-e^s}.
\end{equation}
Without a fixed spin lift, this formula is understood up to the common
overall sign of the fermionic coordinates.
\end{lemma}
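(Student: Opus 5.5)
The plan is to reduce to the body coordinates using Lemma~\ref{lem:bodyreduction}, and then evaluate the explicit formula of Lemma~\ref{lemma:corrected-common-first-fermion} directly.

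\emph{Reduction to bodies.} The hypotheses are exactly those of Lemma~\ref{lem:bodyreduction}. Any nilpotent even corrections to the coordinates of $P,Q,R$ can therefore be dropped, and we may take $p_1=e^s$, $p=0$, $q_1=e^t$, $q=0$, and $(r_1,r_2,r)$ real. I would first check that the nilpotent even corrections carry a factor of the odd generator. This holds because the constraint $\langle X,X\rangle=1$ ties the nilpotent even part to $\phi\psi$, and $\phi$ is a multiple of that generator; if needed, this is the one point to verify carefully. The cubic term $\tfrac32\alpha\gamma\epsilon/\det$ vanishes, since three scalar multiples of one odd generator multiply to zero. So $\lambda$ is given by its linear part alone:
\[
\lambda=p_1\xi+p\eta+\alpha .
\]

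\emph{Computing the determinant and $\xi$.} With $\alpha=\theta_P$, $\gamma=\theta_Q$, $\epsilon=\theta_R$, the determinant is
\[
\det=p_1(q-r)+q_1(r-p)+r_1(p-q)=r\,(e^t-e^s).
\]
Here $r\neq0$ because $P_\#,Q_\#$ lie on the geodesic $\{x=0\}$ and the bodies are noncollinear, and $e^t\neq e^s$ because $s\neq t$. Hence $\det$ has invertible body. The solution formula from the proof of Lemma~\ref{lemma:corrected-common-first-fermion} then gives
\[
\xi=\frac{(p-r)(\gamma-\alpha)+(q-p)(\epsilon-\alpha)}{\det}
=\frac{-r(\theta_Q-\theta_P)}{r(e^t-e^s)}
=\frac{\theta_P-\theta_Q}{e^t-e^s}.
\]
Since $p=0$, the value of $\eta$ never enters $\lambda$, so there is no need to compute it.

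\emph{Evaluating $\lambda$ and the consistency check.} Substituting gives
\[
\lambda=e^s\,\frac{\theta_P-\theta_Q}{e^t-e^s}+\theta_P=\frac{e^t\theta_P-e^s\theta_Q}{e^t-e^s},
\]
which is \eqref{eq:onedim}. As a consistency check, the $Q$-expression $q_1\xi+q\eta+\gamma=e^t\xi+\theta_Q$ gives the same value, as it must.

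\emph{Spin lift.} The statement about the spin lift follows from Remark~\ref{rem}. Changing the lift replaces all fermionic coordinates by their negatives, and the right-hand side of \eqref{eq:onedim} is linear in the $\theta$'s. So without a fixed lift the formula holds up to the common overall sign.

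The main obstacle is only the bookkeeping in the reduction step, namely confirming that all even nilpotents are divisible by the single odd generator. The rest is a direct substitution.
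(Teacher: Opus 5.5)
Your proof is the paper's: apply Lemma~\ref{lem:bodyreduction}, compute $\det=r(e^t-e^s)$, discard the cubic term, and evaluate the linear part. You go through $\xi=(\theta_P-\theta_Q)/(e^t-e^s)$ where the paper writes the cofactor numerator $r(e^t\theta_P-e^s\theta_Q)$ directly, and your remark that $r\neq0$ by noncollinearity makes explicit a cancellation the paper leaves tacit. The one thing to retract is your claim that $\langle X,X\rangle=1$ forces the even nilpotents to be divisible by the odd generator. For any even nilpotent $\nu$, the point $(e^{s+\nu},e^{-s-\nu},0\mid\theta_P,0)$ lies on $\hyperp$ with body $P_\#$, yet it shifts $\lambda$ by $e^{s+t}\nu(\theta_P-\theta_Q)/(e^t-e^s)^2$ to first order in $\nu$. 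So that divisibility is an implicit hypothesis carried over from Lemma~\ref{lem:bodyreduction}, which the paper likewise simply assumes, not a consequence of the hyperboloid equation.
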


\begin{proof}
By Lemma~\ref{lem:bodyreduction}, we may use the displayed body even
coordinates.  The determinant is
\[
e^s(0-r)+e^t(r-0)+r_1(0-0)
=
r(e^t-e^s),
\]
and the linear numerator is
\[
\theta_P(e^t r)
-
\theta_Q(e^s r)
+
\theta_R(e^s0-0e^t)
=
r(e^t\theta_P-e^s\theta_Q).
\]
The cubic term is zero because the three first fermions are scalar
multiples of the same odd generator.  Since \(s\ne t\), the denominator
\(e^t-e^s\) is nonzero, and \eqref{eq:onedim} follows.
\end{proof}

\begin{proposition}[Side-additivity of the common-first fermion]
\label{prop:sideadditivity}
Fix a spin structure and use the induced compatible spin lifts for the
triangles \((P,Q,R)\), \((P,T,R)\), and \((T,Q,R)\).  Let \(T\in PQ\),
with
\[
x=d(P_\#,T_\#),\qquad y=d(T_\#,Q_\#),\qquad c=d(P_\#,Q_\#)=x+y.
\]
Then
\[
\tanh(c/2)\,\lam(P,Q,R)
=
\tanh(x/2)\,\lam(P,T,R)
+
\tanh(y/2)\,\lam(T,Q,R).
\]
After forgetting the spin choice, the identity is understood modulo the
simultaneous overall sign of all the fermionic coordinates.
\end{proposition}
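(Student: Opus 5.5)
The plan is to choose coordinates in which the whole configuration reduces to Lemma~\ref{lem:onedim}, and then check a purely body-level hyperbolic identity.

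\textbf{Step 1: normalize the even frame.} Since only bodies enter the distances $x,y,c$, we may first apply an element of $\widehat{\mathrm{SL}}(2,\mathbb R)$, using the spin lift fixed by the parent. We choose it so that $P_\#=(1,1,0)$ and $Q_\#=(e^{c},e^{-c},0)$ lie on the standard body geodesic. Then $T_\#=(e^{x},e^{-x},0)$. Here we use that the common-first fermions of the parent and of both daughters transform compatibly under the same even spin lift; this is the convention fixed in Definition~\ref{def:spin-supertriangle}.

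\textbf{Step 2: apply the odd normalization of the parent.} Next we apply the odd normalization $u(\xi,\eta)$ of Lemma~\ref{lemma:corrected-common-first-fermion} for the parent triangle $(P,Q,R)$. This leaves the bodies unchanged and makes the first fermions of $P,Q,R$ all equal to $\lam=\lam(P,Q,R)$.

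Because $T$ lies on the supergeodesic through $P$ and $Q$, the relation $\langle X(\widehat s),X(\widehat t)\rangle=\cosh(\widehat s-\widehat t)$ lets us write
\[
T=\frac{\sinh\widehat y\,P+\sinh\widehat x\,Q}{\sinh\widehat c}
\]
with even coefficients. Since $u(\xi,\eta)$ acts linearly, the same relation persists after normalization. Hence the first fermion of $T$ is $\kappa\lam$ with $\kappa=(\sinh\widehat x+\sinh\widehat y)/\sinh\widehat c$. Thus all relevant first fermions are multiples of the single odd generator $\rho=\lam$, and any nilpotent part of $\kappa$ is annihilated or can be absorbed; only $\kappa_\#=(\sinh x+\sinh y)/\sinh c$ should survive.

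\textbf{Step 3: reduce to the body computation.} Now we want to invoke Lemma~\ref{lem:bodyreduction} to discard the nilpotent corrections to the even coordinates. Granting this, Lemma~\ref{lem:onedim} applies to both daughters and gives
\[
\lam(P,T,R)=\frac{e^{x}-\kappa_\#}{e^{x}-1}\,\lam,\qquad
\lam(T,Q,R)=\frac{e^{c}\kappa_\#-e^{x}}{e^{c}-e^{x}}\,\lam .
\]
The proposition is then equivalent to the scalar identity
\[
\tanh\tfrac c2=\tanh\tfrac x2\,\frac{e^{x}-\kappa_\#}{e^{x}-1}+\tanh\tfrac y2\,\frac{e^{c}\kappa_\#-e^{x}}{e^{c}-e^{x}},\qquad c=x+y .
\]
We will verify this by writing everything in $e^{x/2},e^{y/2}$, using $\tanh(u/2)=(e^{u}-1)/(e^{u}+1)$ and $\sinh x+\sinh y=2\sinh\frac c2\cosh\frac{x-y}2$. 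Cyclicity of $\lam$, from the Corollary, lets us order the daughters' vertices as required by Lemma~\ref{lem:onedim}. The statement without a fixed spin choice follows because every step is equivariant under $\iota$.

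\textbf{Main obstacle.} The delicate step is justifying the hypotheses of Lemma~\ref{lem:bodyreduction}. After $u(\xi,\eta)$, the even coordinates acquire nilpotent corrections quadratic in the original fermions, and these need not be divisible by $\lam$. Likewise $\widehat x,\widehat y$ carry nilpotent parts $t\,\Delta c$. My first attempt would be to show directly that in the daughters' $\lam$-formulas every such correction multiplies an expression already proportional to $\lam$. It must then cancel against the corresponding correction in the denominator, because the parent's first fermions are exactly equal, so that the numerator $N$ and the difference terms vanish identically. If this fails, the fallback is to compute the degree-three correction terms explicitly with the $\xi\eta\det$ identity used in the proof of Lemma~\ref{lemma:corrected-common-first-fermion}.
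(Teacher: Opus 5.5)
Your Steps 1--3 follow the paper's proof step for step. You use the same side-normal frame and get the same first fermion $\kappa_\#\lambda$ for the cut point; deriving it from the exact relation $T=(\sinh\widehat y\,P+\sinh\widehat x\,Q)/\sinh\widehat c$ is a tidy way to do so. You then apply Lemma~\ref{lem:onedim} twice, with $(s,t)=(0,x)$ and $(x,c)$ directly, so no appeal to cyclicity is needed, and you arrive at the same scalar identity. That identity is true. Since $\kappa_\#=\cosh\frac{x-y}{2}/\cosh\frac c2$, the two daughter factors reduce to $e^{y/2}\cosh\frac x2/\cosh\frac c2$ and $e^{-x/2}\cosh\frac y2/\cosh\frac c2$. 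The claim then follows from $e^{y/2}\sinh\frac x2+e^{-x/2}\sinh\frac y2=\sinh\frac c2$.

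The gap is the step you leave as ``granting this'', and the mechanism you suggest for closing it would not work. For each daughter the first fermions are $\lambda,\kappa\lambda,\lambda$. So the linear numerator in Lemma~\ref{lemma:corrected-common-first-fermion} is $(\kappa-1)\lambda$ times an even expression, and it does not vanish. Nilpotent parts of the even data that $\lambda$ does not annihilate therefore survive; they do not cancel against the denominator.

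Concretely, take the common-first triple with second fermions $0$ and even parts $(1,1,0)$, $(e^{\widehat c},e^{-\widehat c},0)$, $y_R$, where $\widehat c=c+\epsilon$ and $\lambda\epsilon\neq0$. Your computation is then exact. It yields $\lambda(P,T,R)$ and $\lambda(T,Q,R)$ equal to $\lambda$ times the factors $g,d$ of Section~\ref{sec:subdivision-calculus} evaluated at $\widehat x,\widehat y$. So the identity holds with $\widehat x,\widehat y,\widehat c$, but it fails with the body lengths; for $x=y$ the discrepancy is a nonzero multiple of $\lambda\epsilon$.

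The missing input is therefore structural, not a finer cancellation. The paper works directly in the common-first normal form \eqref{eq:common-first-normal-form-main}. Under its standing convention, the only nilpotent even corrections there are the $\lambda\mu$-type terms $-\lambda\mu_i y_i$ with body $y_i$. Lemma~\ref{lem:inner-products-main} then gives $\Delta c=-\lambda(\mu_P+\mu_Q)\tanh\frac c2$. Hence the nilpotent parts of $\widehat x=\frac xc\widehat c$, of $\widehat y$, of $\kappa$, and of the even coordinates of $T$ are all multiples of $\lambda$. Lemma~\ref{lem:bodyreduction} applies with $\rho=\lambda$, and $\kappa\lambda=\kappa_\#\lambda$. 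You never need to track the corrections that $u(\xi,\eta)$ produces on an arbitrary starting configuration: by Theorem~\ref{thm:complete} you may begin from the gauged representative.
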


\begin{proof}
After fixing the spin lift, the signed common-first fermion is
invariant under spin-preserving \(\osp\)-congruence, so we may choose a representative in which the bodies of the
side \(PQ\) are in normalized position,
\[
        P_\#=(1,1,0),\qquad
       Q_\#=(e^c,e^{-c},0),
\]
and the parent triangle is in common-first gauge with first fermion $\lambda$.
%\[ \alpha_P=\alpha_Q=\alpha_R=\lam.\]
This only
fixes the bodies; the full superpoints may have nilpotent even corrections which
are harmless by Lemma \ref{lem:bodyreduction}.

On the body geodesic from \(P\) to \(Q\), the cut point has body
$
        T_\#=(e^x,e^{-x},0)
$,
and the body interpolation coefficients are
\[
        S=\frac{\sinh y}{\sinh c},\qquad
        U=\frac{\sinh x}{\sinh c}.
\]
Since the endpoint first fermions of \(P\) and \(Q\) are both \(\lam\), the first
fermion of the cut point is
\begin{equation}\label{eq:sigma}
        \sigma=(S+U)\lam
        =\frac{\sinh y+\sinh x}{\sinh c}\,\lam.
\end{equation}
Nilpotent corrections to the interpolation coefficients contain a factor
\(\lam\), and therefore disappear after multiplication by the endpoint first
fermion \(\lam\).

Apply Lemma \ref{lem:onedim} first to \((P,T,R)\), where \((s,t)=(0,x)\),
\(\theta_P=\lam\), and \(\theta_T=\sigma\), giving
\begin{equation}\label{eq:leftraw}
        \lam(P,T,R)=\frac{e^x\lam-\sigma}{e^x-1}.
\end{equation}
Using \eqref{eq:sigma} and \(c=x+y\), this simplifies to
\begin{equation}\label{eq:leftfactor}
        \lam(P,T,R)
        =
        \frac{e^{y/2}\cosh(x/2)}{\cosh(c/2)}\,\lam.
\end{equation}
Apply Lemma \ref{lem:onedim} to \((T,Q,R)\), where \((s,t)=(x,c)\),
\(\theta_T=\sigma\), and \(\theta_Q=\lam\).  This gives
\begin{equation}\label{eq:rightraw}
        \lam(T,Q,R)=\frac{e^c\sigma-e^x\lam}{e^c-e^x},
\end{equation}
and hence
\begin{equation}\label{eq:rightfactor}
        \lam(T,Q,R)
        =
        \frac{e^{-x/2}\cosh(y/2)}{\cosh(c/2)}\,\lam.
\end{equation}
Finally, multiply \eqref{eq:leftfactor} by \(\tanh(x/2)\) and
\eqref{eq:rightfactor} by \(\tanh(y/2)\).  Their sum is
\[
\frac{e^{y/2}\sinh(x/2)+e^{-x/2}\sinh(y/2)}{\cosh(c/2)}\,\lam.
\]
Since \(c=x+y\),
we have
        $e^{y/2}\sinh(x/2)+e^{-x/2}\sinh(y/2)=\sinh(c/2)$, and
the sum is therefore
\[
        \frac{\sinh(c/2)}{\cosh(c/2)}\,\lam
        =\tanh(c/2)\,\lam.
\]
Because the parent triangle is in common-first gauge, the signed
invariant \(\lam(P,Q,R)\), relative to the chosen spin lift, is
represented by the common first fermion \(\lam\).  Moreover, the cubic
term is
$
{3\over2}{\lam^3\over\det}=0
$,
and the asserted identity follows.  Reversing the spin lift changes
all three common-first fermions by the same overall sign and therefore
preserves the identity.\end{proof}

This does {\sl not define a genuinely additive invariant of spin supertriangles},
because the coefficient attached to a side depends on its two endpoints and is
therefore not preserved when that side is subdivided. We instead regard it as a
{\it side-invariant} of a spin supertriangle with a distinguished oriented edge.
Such side-invariants are comparatively easy to construct, unlike genuine
additive invariants. The one obtained here is nevertheless especially elegant for its law of subdivision
in Proposition
\ref{prop:sideadditivity}, which is one reason why we
highlight it here.  This side-invariant will moreover be useful in the sequel.

\subsection{Complete modulus of spin supertriangles}
We now show that the common-first data are not merely convenient coordinates:
they can be extended to a complete modulus.  For this purpose, write the three vertices as
\(P_1,P_2,P_3\) and after imposing the common-first gauge, put
\[
P_i=(v_i\mid\lambda,\mu_i),
\]
where \(v_i\) denotes its even part.  The super-hyperboloid equation gives
$
\langle v_i,v_i\rangle=1-2\lambda\mu_i
$.
Define
\[
y_i=(1+\lambda\mu_i)v_i,
\]
so that
\[
\langle y_i,y_i\rangle
=
(1+\lambda\mu_i)^2\langle v_i,v_i\rangle
=
(1+2\lambda\mu_i)(1-2\lambda\mu_i)
=
1.
\]
Conversely, since
$
(1-\lambda\mu_i)(1+\lambda\mu_i)=1
$,
we have
$
v_i=(1-\lambda\mu_i)y_i
$,
and so
\begin{equation}\label{eq:common-first-normal-form-main}
P_i
=
\bigl((1-\lambda\mu_i)y_i\mid\lambda,\mu_i\bigr).
\end{equation}

\begin{lemma}[Inner products in the common-first slice]
\label{lem:inner-products-main}
Put
\[
G_{ij}=\langle P_i,P_j\rangle,
\qquad
A_{ij}=\langle y_i,y_j\rangle,
\qquad
i\ne j,
\]
where \(P_i\) is written in the normal form
\eqref{eq:common-first-normal-form-main}.  Then
\begin{equation}\label{eq:Gij-main}
G_{ij}
=
A_{ij}
+
\lambda(\mu_i+\mu_j)(1-A_{ij}),
\end{equation}
and consequently
\begin{equation}\label{eq:Aij-recover-main}
A_{ij}
=
\frac{G_{ij}-\lambda(\mu_i+\mu_j)}
     {1-\lambda(\mu_i+\mu_j)}
=
G_{ij}
+
\lambda(\mu_i+\mu_j)(G_{ij}-1).
\end{equation}
\end{lemma}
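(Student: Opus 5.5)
We expand the pairing directly in the normal form \eqref{eq:common-first-normal-form-main}. For $P_i=(v_i\mid\lambda,\mu_i)$ and $P_j=(v_j\mid\lambda,\mu_j)$, the super Minkowski pairing is the polarization of $\langle X,X\rangle=x_1x_2-x^2+2\phi\psi$. We first check the sign convention for the fermionic term. Polarizing $2\phi\psi$ gives $\phi_i\psi_j+\phi_j\psi_i$, since
\[
2(\phi_i+\phi_j)(\psi_i+\psi_j)=2\phi_i\psi_i+2\phi_j\psi_j+2(\phi_i\psi_j+\phi_j\psi_i),
\]
and half the cross term is $\phi_i\psi_j+\phi_j\psi_i$. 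With $\phi_i=\phi_j=\lambda$ this yields the odd contribution $\lambda\mu_j+\lambda\mu_i=\lambda(\mu_i+\mu_j)$. The even contribution is $\langle v_i,v_j\rangle$, bilinear over the even supernumbers. Hence
\[
G_{ij}=\langle v_i,v_j\rangle+\lambda(\mu_i+\mu_j).
\]

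Next we substitute $v_i=(1-\lambda\mu_i)y_i$. This gives $\langle v_i,v_j\rangle=(1-\lambda\mu_i)(1-\lambda\mu_j)A_{ij}$. Expanding the prefactor, the only potentially troublesome term is $\lambda\mu_i\lambda\mu_j$, which vanishes because $\lambda^2=0$. So the prefactor is $1-\lambda(\mu_i+\mu_j)$, and
\[
G_{ij}=A_{ij}-\lambda(\mu_i+\mu_j)A_{ij}+\lambda(\mu_i+\mu_j)=A_{ij}+\lambda(\mu_i+\mu_j)(1-A_{ij}),
\]
which is \eqref{eq:Gij-main}.

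For the inversion, set $n=\lambda(\mu_i+\mu_j)$. This is even and nilpotent with $n^2=0$, again because $\lambda^2=0$. Rewriting \eqref{eq:Gij-main} as $G_{ij}-n=A_{ij}(1-n)$ and dividing by the invertible element $1-n$ gives the first expression in \eqref{eq:Aij-recover-main}. Multiplying numerator and denominator by $(1+n)$ turns this into
\[
(G_{ij}-n)(1+n)=G_{ij}+nG_{ij}-n,
\]
using $n^2=0$, which is the second expression.

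The only step needing care is the sign and normalization of the odd part of the polarized pairing. This depends on the ordering convention $\phi\psi$ and on whether the bilinear form is defined as half the polarization. The consistency check is the diagonal case: it must reproduce $\langle v_i,v_i\rangle=1-2\lambda\mu_i$, which forces the coefficient above. All remaining steps are routine nilpotent algebra driven by $\lambda^2=0$.
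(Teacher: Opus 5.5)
Your proposal is correct and follows the paper's own argument. You expand $G_{ij}$ in the normal form, kill the cross term $\lambda\mu_i\lambda\mu_j$ using $\lambda^2=0$, and invert the affine relation using $\bigl(\lambda(\mu_i+\mu_j)\bigr)^2=0$; your polarization check giving the odd contribution $\lambda\mu_j+\lambda\mu_i$ is a detail the paper leaves implicit, and it agrees with the paper's computation.
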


\begin{proof}
Using \eqref{eq:common-first-normal-form-main}, we find
\begin{align*}
G_{ij}
&=
(1-\lambda\mu_i)(1-\lambda\mu_j)A_{ij}
+
\lambda\mu_j+\lambda\mu_i\\
&=
A_{ij}
-
\lambda(\mu_i+\mu_j)A_{ij}
+
\lambda(\mu_i+\mu_j)\\
&=
A_{ij}
+
\lambda(\mu_i+\mu_j)(1-A_{ij}).
\end{align*}
Since \(\lambda(\mu_i+\mu_j)\) squares to zero, this solves the affine
relation and gives \eqref{eq:Aij-recover-main}.
\end{proof}

\begin{theorem}[Completeness of the common-first spin modulus]
\label{thm:complete}
\label{lem:common-first-complete-main}
On the generic common-first stratum, after fixing orientation, time
orientation, and the spin choice of Definition~\ref{def:spin-supertriangle},
the data
\begin{equation}\label{eq:common-first-package-main}
\bigl(
\langle P_1,P_2\rangle,
\langle P_2,P_3\rangle,
\langle P_3,P_1\rangle;
\lambda,
\mu_1,
\mu_2,
\mu_3
\bigr)
\end{equation}
form a complete invariant of ordered spin supertriangles under
spin-preserving \(\osp\)-congruence.  Forgetting the spin choice gives the
ordinary supertriangle modulus by the identification
\[
(\lambda,\mu_1,\mu_2,\mu_3)
\sim
(-\lambda,-\mu_1,-\mu_2,-\mu_3).
\]
\end{theorem}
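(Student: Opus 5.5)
Two things must be checked: that the data \eqref{eq:common-first-package-main} are invariants, and that they determine the ordered spin supertriangle up to spin-preserving $\osp$-congruence.

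\emph{Invariance.} The inner products $\langle P_i,P_j\rangle$ are $\osp$-invariant, since the action preserves the super Minkowski pairing. For the fermions, write an arbitrary $g\in\osp$ as $g=A\circ u(\xi',\eta')$ using Lemma~\ref{factor}. By Lemma~\ref{lemma:corrected-common-first-fermion} the pure odd normalizing transformation of a generic triple is unique, so the common-first gauge of $g\cdot(P_1,P_2,P_3)$ differs from that of $(P_1,P_2,P_3)$ only by an element $A$ of the even subgroup $\widehat{\mathrm{SL}}(2,\R)$. Any such element preserves the common-first condition, because it acts on $(\phi,\psi)$ linearly by a $2\times2$ matrix. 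The plan is then to fix a residual normalization of $A$ by putting the bodies in a standard position, for example $y_1$ at the base point and $y_2$ on a fixed geodesic with $y_3$ in a fixed half-plane; orientation and time orientation make this unique in $\mathrm{SO}^+(2,1)$. The fermions $(\lambda,\mu_i)$ are then read off in this normalized frame. Once that frame is fixed, the only remaining freedom is the central element, which is exactly the spin choice of Definition~\ref{def:spin-supertriangle}. I would phrase the definition of $\lambda,\mu_i$ in this way, so that invariance under spin-preserving congruence becomes tautological and reversing the spin lift acts by $\iota$.

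\emph{Completeness.} Suppose two spin supertriangles share the data. Bring both into common-first gauge and into the normalized even frame, so that they have the same $(\lambda,\mu_1,\mu_2,\mu_3)$. Lemma~\ref{lem:inner-products-main}, via \eqref{eq:Aij-recover-main}, recovers $A_{ij}=\langle y_i,y_j\rangle$ from the $G_{ij}$ and the fermions, so the two triples of unit even vectors $y_i$ have the same Gram matrix. The next step is an ordinary Lorentzian rigidity statement over the Grassmann algebra: three vectors with $\langle y_i,y_i\rangle=1$ and non-degenerate Gram matrix (the body is non-degenerate by noncollinearity) are determined by their Gram matrix up to $\mathrm{SO}^+(2,1)$ with coefficients in $\hat{\mathbb R}[0]$. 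I would prove this by a Gram--Schmidt construction. The $y_i$ form a basis, since the body determinant is nonzero, and the Gram matrix fixes the coordinates of each $y_i$ in the orthonormal frame built from them. Every square root and division that appears has invertible body, which extends the classical argument to supernumbers. Because the frames were already normalized, this Lorentz element is forced to be the identity on bodies, and it then equals the identity outright. Hence the $y_i$ agree, and by \eqref{eq:common-first-normal-form-main} the $P_i$ agree. So the original triangles differ by (normalizing map)$^{-1}\circ$(normalizing map), which is a spin-preserving element of $\osp$.

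The step I expect to be the main obstacle is the Grassmann-coefficient rigidity together with a consistent treatment of the even-frame normalization. One has to check that the normalization of the even frame can be done over $\hat{\mathbb R}[0]$, not only on bodies, and that the lift to $\widehat{\mathrm{SL}}(2,\R)$ is pinned down exactly by the spin choice. I would try to settle this first by writing the normalized frame explicitly in terms of the $y_i$, for instance $e_0=y_1$, with $e_1$ the normalized component of $y_2$ orthogonal to $y_1$, and $e_2$ the oriented unit normal to both. I would then verify that these expressions are smooth in nilpotent corrections.

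Finally, the statement about forgetting the spin choice follows from Remark~\ref{rem}. The two lifts differ by the central element, which fixes the $G_{ij}$ and reverses every fermion, so the ordinary modulus is the quotient by $\iota$.
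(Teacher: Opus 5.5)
Your overall strategy is the right one: fix the odd gauge together with a full $\hat{\mathbb R}[0]$-level normalization of the even frame, then compare normalized triples. But the step meant to produce the normalized representative rests on a false claim. The even subgroup does \emph{not} preserve the common-first condition. Under $X\mapsto g^{st}Xg$, the element of $\widehat{\mathrm{SL}}(2,\R)$ with block $\begin{psmallmatrix}a&b\\c&d\end{psmallmatrix}$ sends $(\phi,\psi)$ to $(a\phi+c\psi,\,b\phi+d\psi)$. On a common-first triple the three first fermions therefore become $a\lambda+c\mu_i$, and these differ as soon as $c_\#\neq0$. Linearity of the action does not help, because the spinor action mixes $\lambda$ with the $\mu_i$. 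For the same reason, the common-first representatives of one orbit do not form an orbit of the even subgroup. They are obtained by applying an even $A$ and then the pure odd $u_A$ that Lemma~\ref{lemma:corrected-common-first-fermion} attaches to $A\cdot(P_1,P_2,P_3)$, one for each $A$, with $u_A$ depending on $A$. So your recipe ``common-first gauge, then move the $y_i$ to standard position'' does not end in a common-first triple. The even normalizing map breaks the gauge; re-gauging by a new $u(\xi,\eta)$ restores it but shifts the $y_i$ by terms quadratic in the fermions; and so on. Some normalization is genuinely needed, as you sensed: even the $c=0$ elements, which do preserve the gauge, act nontrivially. For example, the block $\mathrm{diag}(a,a^{-1})$ sends $(\lambda,\mu_i)$ to $(a\lambda,a^{-1}\mu_i)$.

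The missing ingredient is a lemma that imposes both conditions at once. It should say that some choice of $A$ in the above description puts the resulting $y_i$ in standard position, and that this choice is unique up to sign. The standard-position requirements are three even equations on $A$:
\begin{itemize}
\item their body is the classical frame normalization, with solutions $\pm A_0$;
\item the standard frame is generic, since in the side-normal frame of Proposition~\ref{prop:sideadditivity} the body of $\det$ is $r(e^c-1)\neq0$;
\item the nilpotent part of $A$ is then fixed uniquely by the implicit function theorem over the Grassmann algebra (a Newton iteration that terminates by nilpotency), because the body Jacobian is invertible.
\end{itemize}
Replacing $A$ by $-A$ negates every fermion and the parameters $(\xi,\eta)$, so the two normalized triples differ exactly by $\iota$. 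With this lemma, your invariance claim becomes tautological and your completeness argument goes through as written, since in standard position equal Gram matrices already force $y_i=y_i'$.

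Compare the paper's route. It never normalizes the frame; it takes $A\in\mathrm{SO}^+(2,1)$ with $Ay_i=y_i'$ and selects its spin lift. That lift acts on the fermions by the transposed block, so it carries $P_i$ to $\bigl((1-\lambda\mu_i)y_i'\mid a\lambda+c\mu_i,\,b\lambda+d\mu_i\bigr)$, which equals $P_i'$ only when the block is trivial. Your full normalization is exactly what makes that last identification valid. Once the lemma above is supplied, your route is the more careful of the two.
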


\begin{proof}
Put two generic ordered spin supertriangles into common-first normal form using their chosen compatible lifts.  Suppose
their data \eqref{eq:common-first-package-main} agree.  Their signed
fermionic coordinates
$
\lambda,\mu_1,\mu_2,\mu_3
$
therefore agree, and Lemma~\ref{lem:inner-products-main} recovers the ordinary
Lorentzian Gram entries
$
A_{ij}=\langle y_i,y_j\rangle
$
from
$
G_{ij}=\langle P_i,P_j\rangle
$
and the fermionic coordinates.  Hence the two ordered triples
$
(y_1,y_2,y_3)
\text{~and~}
(y_1',y_2',y_3')
$
have the same ordinary Lorentzian Gram matrix.

By Lorentzian Gram rigidity, together with the fixed orientation and time
orientation, there is a unique element
\[
A\in\mathrm{SO}^{+}(2,1)
\]
such that
$
Ay_i=y_i',
{\rm{~for~}}
i=1,2,3
$.
The even subgroup of \(\osp\) is the spin double cover of
\(\mathrm{SO}^{+}(2,1)\), so \(A\) has two lifts differing by the central
spin element.  These two lifts act identically on the even vectors and
reverse all fermionic coordinates simultaneously.  Since the signed
fermionic coordinates of the two common-first triples agree, the fixed spin
structure selects the lift carrying
$
(\lambda,\mu_1,\mu_2,\mu_3)
$
to the corresponding fermionic coordinates of the second triple.

For this lift, one has
\[
(1-\lambda\mu_i)Ay_i
=
(1-\lambda\mu_i)y_i',
\]
and therefore
\[
\bigl((1-\lambda\mu_i)Ay_i\mid\lambda,\mu_i\bigr)
=
\bigl((1-\lambda\mu_i)y_i'\mid\lambda,\mu_i\bigr)
=
P_i'
\]
for \(i=1,2,3\).  Thus, the two common-first representatives are related by
an element of \(\osp\).  Undoing the two common-first normalizations gives a spin-preserving
\(\osp\)-congruence between the original ordered spin supertriangles.  This
proves completeness on the spin refinement.  If the spin choice is forgotten,
the two possible lifts differ exactly by
\[
(\lambda,\mu_1,\mu_2,\mu_3)
\longmapsto
(-\lambda,-\mu_1,-\mu_2,-\mu_3),
\]
which gives the stated quotient description of ordinary supertriangles.
\end{proof}

\section{The subdivision calculus in common-first coordinates}
\label{sec:subdivision-calculus}

The classification rests on comparing a parent spin supertriangle with the
two spin supertriangles obtained by cutting one of its sides.  Each daughter
has its own common-first gauge, so before their coefficients can be added they
must be transported back to the common-first coordinates of the parent.
This section derives those transport formulas once and for all.

Throughout, the daughter triangles carry the spin lifts induced from the
parent.  Hence all fermionic coordinates below are signed.  The body side
lengths are denoted
\[
a,b,x,y,m,~{\rm and}~c=x+y,
\]
as illustrated in Figure \ref{fig:body}.  These are ordinary real body
lengths.  By a side subdivision \(T\in PQ\) we mean the full cut point determined
by the canonical affine parametrization above; equivalently, the cut is specified
by its body point \(T_\#\in P_\#Q_\#\), or by ordinary real body arc length along
the reduced geodesic.  The corresponding full even superdistances may contain
nilpotent corrections.  These corrections are
accounted for explicitly by the common-first transport and by the positive
fermionic-degree terms.  Coefficient functions are always evaluated on the
body lengths.  This convention will be used throughout the classification.
The ordinary hyperbolic sine and cosine laws will be used freely for the three body triangles,
and we note parenthetically that the identical formulas hold in super hyperbolic geometry \cite{defect,3d}.

\begin{figure}[H]
\centering
\includegraphics[
  width=0.75\textwidth,
  trim={1cm 14.5cm 1cm 4cm},
  clip
]{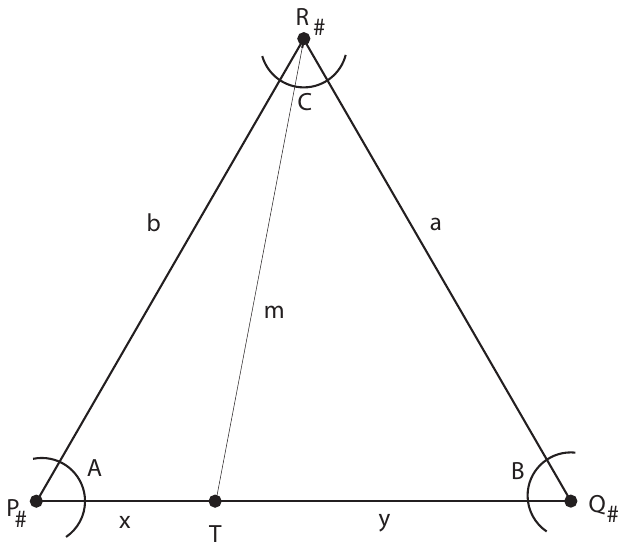}
\caption{Notation for body triangle: hyperbolic lengths and angles.}
\label{fig:body}
\end{figure}

We define
$c=x+y$
and introduce the expressions
$$S={{{\rm sinh}\,y}\over{{\rm sinh}\,c}}~~{\rm and}~~U={{{\rm sinh}\,x}\over{{\rm sinh}\,c}}.$$
The common first fermions on the left ($g$ for gauche) and right ($d$ for droit) are respectively given by
$$\lambda^\ell=\lambda(P,T,R)=g\lambda~~{\rm and}~~\lambda ^r=\lambda(Q,R,T)= d\lambda\,,
~{\rm where}~~\lambda=\lambda(P,Q,R),$$
according to Proposition \ref{prop:sideadditivity} with
$$g=e^{y\over 2}~ {{{\rm cosh}\,{x\over 2}}\over{{\rm cosh}\,{c\over 2}}}~~{\rm and}~~
d=e^{-{x\over 2}}~{{{\rm cosh}\,{y\over 2}}\over{{\rm cosh}\,{c\over 2}}}.$$

For the second fermions in the left triangle, we have the formulas
$$\begin{aligned}
\beta_P^\ell&=\beta_P+\ell_P\lambda, ~~\beta^\ell_R=\beta_R+\ell_R\lambda,\\
\beta_T^\ell&=S\beta_P+U\beta_Q+\ell_T\lambda,
\end{aligned}$$
where
$$\begin{aligned}
\ell_P&={{1-G_1}\over G}(g-1),~~\ell_T={{1-G_1}\over G}(g-1)e^{-x},\\
\ell_R&=\biggl [G+G_2{{1-G_1}\over G}\biggr](g-1),
\end{aligned}$$
with
$$\begin{aligned}
G_1&={{e^x\, {\rm cosh}\,b-{\rm cosh}\,m}\over {{\rm sinh}\, x}},~~
G_2={{{\rm cosh}\, m-e^{-x}\,{\rm cosh}\, b}\over{{\rm sinh}\, x}},\\
G&={{\sqrt{1~~+~~2\,{\rm cosh}\,m~{\rm cosh}\, b~{\rm cosh}\, x~~-~~{\rm cosh}^2 m-{\rm cosh}^2 b-{\rm cosh}^2 x}}\over{{\rm sinh}\, x}}.
\end{aligned}$$

For the second fermions in the right triangle, we have the formulas
$$\begin{aligned}
\beta_Q^r&=\beta_Q+r_Q\,\lambda, ~~\beta^r_R=\beta_R+r_R\lambda,\\
\beta_T^r&=S\beta_P+U\beta_Q+r_T\lambda,
\end{aligned}$$
where
$$\begin{aligned}
r_Q&={{1-e^{-y}\,D_1}\over D}(d-1)e^{-y},~~r_T={{1-e^{-y}\,D_1}\over D}(d-1),\\
r_R&=e^{-y}\, D\,(d-1)
+{{D_2\,(d-1)(1-e^{-y}\,D_1)}\over D},
\end{aligned}$$
with
$$\begin{aligned}
D_1&={{e^y\, {\rm cosh}\,m-{\rm cosh}\,a}\over {{\rm sinh}\, y}},~~
D_2={{{\rm cosh}\, a-e^{-y}\,{\rm cosh}\, m}\over{{\rm sinh}\, y}},\\
D&={{\sqrt{1~~+~~2\,{\rm cosh}\,m~{\rm cosh}\, a~{\rm cosh}\, y~~-~~{\rm cosh}^2 m-{\rm cosh}^2 a-{\rm cosh}^2 y}}\over{{\rm sinh}\, y}}.
\end{aligned}$$

\begin{proof}
[Verification of these transport formulas]
Only the last displayed formulas require verification.  In the side-normal
body frame for the left subtriangle, the body coordinates are
\[
  P_\#=(1,1,0),\qquad
  T_\#=(e^x,e^{-x},0),\qquad
  R_\#=(r_1^L,r_2^L,r^L).
\]
The common-first value on the left is \(p\lambda\).  If the odd correction which
re-gauges the left triangle is written as \((\xi_L,\eta_L)\), then the first
fermions satisfy, after applying Lemma \ref{lem:bodyreduction},
\[
  \lambda+\xi_L=p\lambda,
\]
\[
  (S+U)\lambda+e^x\xi_L=p\lambda,
\]
\[
  \lambda+r_1^L\xi_L+r^L\eta_L=p\lambda.
\]
\((S+U)\lambda\) is the first fermion of the cut point at the relevant
order.  The nilpotent even parts of \(S\) and \(U\)
are already divisible by \(\lambda\), hence disappear after multiplication by
\(\lambda\).  The first equation gives
$
  \xi_L=(p-1)\lambda
$,
and the third gives
$
  \eta_L=\frac{1-r_1^L}{r^L}(p-1)\lambda
$.
Substituting this odd correction into the transformation law for the second
fermion gives the required three equations.
The use of body coordinates is legitimate here because every nilpotent even
correction to a common-first even coordinate is of type \(\lambda\beta\); it vanishes when
multiplied by \(\xi_L\) or \(\eta_L\), because both are multiples of
\(\lambda\).

The right formulas are identical in spirit.  In the right side-normal body
frame,
\[
  T_\#=(1,1,0),\qquad
  Q_\#=(e^y,e^{-y},0),\qquad
  R_\#=(r_1^R,r_2^R,r^R),
\]
and the common-first value is \(q\lambda\).  Solving the three first-fermion
equations gives
\[
  \xi_R=(q-(S+U))\lambda=e^{-y}(q-1)\lambda,
\]
\[
  \eta_R=\frac{(q-1)(1-e^{-y}r_1^R)}{r^R}\lambda.
\]
The second-fermion transformation law gives
the further required equations.  Lemma \ref{lem:bodyreduction}, again shows that no omitted nilpotent even correction
contributes to the displayed coefficients.
\end{proof}

For a parent triangle \(\Delta\) and a subtriangle
\(\Delta'\subset\Delta\), whenever an expression written in the
common-first coordinates of \(\Delta'\) is compared with one on
\(\Delta\), we rewrite it in the common-first coordinates of
\(\Delta\) by the full transport formulas.  We denote this substitution by
$
\mathsf T_{\Delta'\to\Delta}
$.
For nested subtriangles
$
\Delta''\subset\Delta'\subset\Delta
$,
the transport is transitive in the sense that
\begin{equation}\label{eq:transport-transitive}
\mathsf T_{\Delta''\to\Delta}
=
\mathsf T_{\Delta'\to\Delta}\circ
\mathsf T_{\Delta''\to\Delta'}.
\end{equation}
More generally, the resulting substitution is independent of the
sequence of intermediate subdivisions, and the common-first
transport is thus flat in the usual sense that all transport diagrams associated with iterated
subdivision commute.  We shall tacitly apply these identifications in the sequel.

\section{Classification by fermionic degree}
\label{sec:classification-proof}

With the complete common-first modulus and its subdivision transport in hand,
the classification becomes straight-forward since every scalar invariant decomposes into
fermionic degrees $0,\ldots,4$, and additivity may be imposed separately in
each degree.  We first state the result, then treat the five degrees in order.
All triangles below are spin supertriangles and all daughter triangles carry
the compatible spin lifts inherited from their parent.  The parent triangle
$PQR$ is in common-first gauge with fermions $(\lambda,\beta_P,\beta_Q,\beta_R)$
as in Section \ref{sec:common}. The regularity
required at degenerate body faces is stated explicitly in each degree.
Throughout, additivity refers to the body-parameter geodesic subdivision
specified in Figure \ref{fig:body} and Section~\ref{sec:subdivision-calculus}, whose notation 
\(a,b,c=x+y,m\) for body distances between pairs among vertices $P,Q,R,T$ and \(S,U,g,d,\ell_P,\ell_T,\ell_R,r_T,r_Q,r_R\) for the body transport
coefficients we henceforth tacitly adopt.

\begin{theorem}[Classification for spin supertriangles]
\label{thm:main}
Every regular cyclic additive scalar invariant of non-degenerate spin
super-hyperbolic triangles in the signed common-first \(3|4\) modulus can be
written uniquely in the form
\begin{equation}\label{eq:main-theorem-form}
  F=\mathsf A\Area+\mathsf B J+\mathsf C\cB_2+\mathsf D\cK_3,
\end{equation}
where \(\mathsf A,\mathsf B,\mathsf C,\mathsf D\) are constant even scalars
in the ground coefficient algebra, \(\Area\) is the usual hyperbolic area of
the underlying body triangle, and \(J=\delta_{\!f}\Area\) is its degree-two
common-first fermionic variation; intrinsically,
\(J=D_{\rm super}-D_{\rm body}\), and in common-first gauge it agrees with
the degree-two correction computed in \cite{defect}. \(\cB_2\) is another degree-two
generator, and \(\cK_3\) is the degree-three generator, both of which will be
explicated in the proof.  Thus, the space of additive invariant of spin hyperbolic supertrianges
has dimension  \(3|1\), with even part $\operatorname{span}\{\Area,J,\cB_2\}$ and odd part $\operatorname{span}\{\cK_3\}$.
Under the spin involution \(\iota\), we have
\[
\iota^*\Area=\Area,\qquad
\iota^*J=J,\qquad
\iota^*\cB_2=\cB_2,\qquad
\iota^*\cK_3=-\cK_3.
\]
\end{theorem}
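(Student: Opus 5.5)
We will use Theorem~\ref{thm:complete}: after fixing the spin lift, a regular cyclic scalar invariant is a function of the three body side lengths together with the four signed fermions $(\lambda,\beta_P,\beta_Q,\beta_R)$. It therefore has a finite expansion $F=F_0+\dots+F_4$ by fermionic degree. Each $F_k$ is a polynomial in the fermions whose coefficients are functions of the body lengths. Every transport $\mathsf T_{\Delta'\to\Delta}$ of Section~\ref{sec:subdivision-calculus} is linear in the fermions, with body coefficients $g,d,S,U,\ell_\bullet,r_\bullet$. Hence the additivity identity
\[
F(PQR)=\mathsf T_{PTR\to PQR}F(PTR)+\mathsf T_{TQR\to PQR}F(TQR)
\]
splits into independent identities, one for each degree $k$. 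We then impose these degree by degree, treating all three sides via cyclicity. The transitivity \eqref{eq:transport-transitive} ensures that imposing additivity for a single cut per side suffices.

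In degree zero the transports act trivially, so $F_0$ is a continuous additive isometry-invariant valuation of body triangles. By \cite{Klain2006HyperbolicValuations} it equals $\mathsf A\,\Area$.

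In degree one we write $F_1=f_\lambda\lambda+\sum_V f_V\beta_V$ and substitute $\lambda^\ell=g\lambda$, $\lambda^r=d\lambda$, and $\beta^\ell_T=S\beta_P+U\beta_Q+\ell_T\lambda$, etc. Comparing the coefficients of $\beta_P,\beta_Q,\beta_R$ gives Cauchy-type functional equations in the cut parameter $x$, with $y=c-x$. Letting $x\to 0$ and using $\mathcal C^1$ regularity at the degenerate face (where the daughter collapses and its coefficient must vanish), we differentiate in $x$ and obtain a linear ODE in $x$. This should force $f_V=0$, after which the $\lambda$-coefficient equation, an inhomogeneous-free version of Proposition~\ref{prop:sideadditivity}, forces $f_\lambda=0$, because the weights $g,d$ do not sum compatibly with cyclicity. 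Degree four is a single monomial $h\,\lambda\beta_P\beta_Q\beta_R$ that transforms by the factor $g\,\det(\text{odd transport})$. It reduces to a multiplicative functional equation, and under $\mathcal C^0$ regularity its only solution is $h=0$.

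Degrees two and three carry the content. In degree two we split the coefficients into the $\beta\beta$ block (three coefficients) and the $\lambda\beta$ block (three coefficients). The $\beta\beta$ block transforms among itself through $S,U$, modulo $\lambda$-terms. Projecting to it gives a linear system of functional equations, and solving it under $\mathcal C^2$ regularity should produce a one-dimensional solution space. We then complete that solution uniquely by a $\lambda\beta$ part, which defines $\cB_2$. The kernel of the projection consists of pure $\lambda\beta$ invariants. There, the fermionic first variation $J=\delta_f\Area$ is additive automatically, since area is additive and the transport is compatible with varying the body points by the nilpotent $\lambda\beta$ corrections of Lemma~\ref{lem:inner-products-main}. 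We then show that the kernel is one-dimensional. Identifying $J$ with $D_{\rm super}-D_{\rm body}$ should follow from the super cosine law and the Angle Defect Theorem applied to the body. Degree three proceeds the same way: the projection to the $\beta\beta\beta$ monomial is one-dimensional, and completing it with the $\lambda\beta\beta$ terms gives $\cK_3$. The kernel (pure $\lambda\beta\beta$) must be shown to vanish under $\mathcal C^1$ regularity.

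The $\iota$-parities follow at once from homogeneity, $\iota^*F_k=(-1)^kF_k$. Uniqueness follows from the independence of the degree pieces.

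We expect the main obstacle to be the degree-two analysis. Solving the coupled functional equations involves the messy coefficients $\ell_R,r_R$ containing $G,D$, and one must show that the $\lambda\beta$ kernel is exactly spanned by $J$. Our first attempt will be to differentiate twice at $x=0$ to obtain ODEs in $c$, and then to check that the candidate solutions satisfy the full finite identity.
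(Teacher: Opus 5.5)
Your degree-by-degree architecture matches the paper's: Klain in degree zero, projection/kernel/completion in degrees two and three, and parities from homogeneity. However, every step that carries content is left as ``should''. The idea the classification hinges on is missing: why the pure $\lambda\beta$ kernel in degree two is spanned by $J$. Differentiating at $x=0$ only relates $f$ to unknown boundary data.

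\textbf{Degree-two kernel.} The paper's key observation is that the map $(\lambda\beta_P,\lambda\beta_Q,\lambda\beta_R)\mapsto(\Delta a,\Delta b,\Delta c)$, with $\Delta a=-\lambda(\beta_Q+\beta_R)\tanh(a/2)$ and cyclically, has determinant $-2\tanh(a/2)\tanh(b/2)\tanh(c/2)\neq0$. So a kernel element is $\omega(\Delta a,\Delta b,\Delta c)$ for a unique ordinary one-form $\omega$ on side-length space, and the homogeneous $f$-equations become additivity of $\omega$. From there:
\begin{itemize}
\item comparing the two orders of two small cuts gives $d\omega=0$, and convexity gives $\omega=d\Phi$;
\item $f(0,t,t)=0$ together with cyclicity normalizes $\Phi$ to vanish on the degenerate faces;
\item the subdivision defect of $\Phi$ is then locally constant and vanishes at a collapsed cut, so $\Phi$ is an additive body valuation, $\Phi=k\Area$, and $I_2=kJ$.
\end{itemize}

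\textbf{Other missing ingredients in degrees two and three.}
\begin{itemize}
\item Completions of $E_B$ and $E_K$ do not exist automatically. The paper builds them by integrating the infinitesimal subdivision defect along the side, using the two-cut identity and flatness.
\item In degree two the completion is unique only modulo $J$, not ``uniquely'' as you write.
\item The $\beta\beta$ projection is one-dimensional because of the density representation $u=\frac{1}{\sinh c}\int_{PQ}\rho(d(X,R))\,ds$. A second-order expansion on the collinear configuration $R$--$P$--$T$--$Q$ then forces $\rho$ to be constant.
\item The $\beta\beta\beta$ projection is one-dimensional because $\Delta(P,T,R)=U\Delta(P,Q,R)$ and $\Delta(T,Q,R)=S\Delta(P,Q,R)$. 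This makes $\Delta\cdot v$ an additive body valuation, hence $\kappa\Area$.
\end{itemize}

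\textbf{Degree four.} Your mechanism here is wrong. The daughters \emph{add}: $G(a,b,c)=gU\,G(m,b,x)+dS\,G(a,m,y)$ is a weighted additivity equation, not a multiplicative one, and regularity alone does not kill it. With $N(t)=e^{t/2}/(\sinh t\cosh(t/2))$ and $\widetilde G=G/N$, there is an infinite family of one-side solutions $\widetilde G=\int_0^c e^{-s}\sigma(r(s))\,ds$. What forces zero is cyclicity on the degenerate triangles $(t,0,t)$ and $(0,t,t)$, which yields $\sigma=\mathrm{sech}^2(t/2)\,\sigma$, hence $\sigma=0$.

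\textbf{Degree one.} Your premise that a collapsed daughter's coefficients vanish is false. Since $\beta_T\to\beta_P$ as $x\to0$, one only gets $u(0,t,t)=0$ and $u(t,t,0)=-u(t,0,t)$. Indeed, in degree two $u(t,t,0)=K\neq0$ for $\cB_2$. To kill $h(t)=u(t,t,0)$, the paper cuts a degenerate triangle to get $h(2x)=h(x)$, then propagates $u=0$ inward using $C^1$ regularity. Likewise, $f_\lambda=0$ is not about $g,d$ being ``incompatible with cyclicity''. The side equation yields an integral formula involving unknown boundary derivatives $A(t),B(t)$, and only the configurations $(t,0,t)$ and $(0,t,t)$ force $A=B=0$.

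\textbf{Degree-three kernel.} This needs the explicit $x\to0$ and $y\to0$ expansions on the collinear triangle $(c,0,c)$, which your plan does not supply.
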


\begin{corollary}[Descent to ordinary supertriangles]
\label{cor:ordinary-supertriangles}
The regular cyclic additive scalar invariants of ordinary super-hyperbolic
triangles, with the spin choice forgotten, are precisely
\[
F=\mathsf A\Area+\mathsf B J+\mathsf C\cB_2.
\]
In particular there is no nonzero odd scalar invariant of ordinary
supertriangles.
\end{corollary}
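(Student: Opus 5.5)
The plan is to derive the corollary directly from Theorem~\ref{thm:main} together with the description of ordinary supertriangles as the $\iota$-quotient of the spin modulus (Theorem~\ref{thm:complete} and Definition~\ref{def:spin-supertriangle}). First, take a regular cyclic additive scalar invariant $F$ of ordinary supertriangles and pull it back along the forgetful map from spin supertriangles. The pullback is automatically $\iota$-invariant as a function of the signed $3|4$ modulus. It remains additive because the daughter triangles of a subdivision carry the spin lifts induced from the parent, so the forgetful map commutes with subdivision. It also remains cyclic and regular, since the even coordinates and the coefficient functions are unchanged. Theorem~\ref{thm:main} therefore gives a unique expression $F=\mathsf A\Area+\mathsf BJ+\mathsf C\cB_2+\mathsf D\cK_3$.

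Next, impose $\iota^*F=F$. Using the transformation rules stated in Theorem~\ref{thm:main}, this yields $F-\iota^*F=2\mathsf D\,\cK_3=0$. By uniqueness of the representation, or equivalently by linear independence of the four generators, which sit in distinct fermionic degrees $0,2,2,3$, this gives $2\mathsf D=0$. Since $2$ is invertible in the real ground algebra, $\mathsf D=0$. To avoid any concern about odd constants, one can argue degree by degree: $\iota$ acts on the degree-$k$ component $F_k$ by $(-1)^k$, so the odd-degree components must vanish, and in particular the $\cK_3$ component vanishes.

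For the converse, each of $\Area$, $J$ and $\cB_2$ is $\iota$-invariant, so any combination $\mathsf A\Area+\mathsf BJ+\mathsf C\cB_2$ descends to a well-defined function on the quotient by $\iota$, which by Theorem~\ref{thm:complete} is the ordinary supertriangle modulus. Additivity, cyclicity and regularity descend as well, since the relevant identities already hold upstairs for every choice of lift and the transports are $\iota$-equivariant: reversing the parent's spin reverses both daughters' spins. This establishes the stated equality of spaces.

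For the final sentence, note that an odd scalar invariant on ordinary supertriangles would pull back to an $\iota$-invariant combination of odd fermionic degree. By the parity observation in the introduction, a homogeneous expression of odd fermionic degree $k$ is multiplied by $(-1)^k=-1$ under $\iota$, so its invariance forces it to be zero. The same conclusion follows from the first step: the only odd generator, $\cK_3$, has been excluded. The one step needing care is checking that the spin-lift conventions for daughters make the pullback of an ordinary additive invariant additive on spin triangles. This is exactly the statement in Definition~\ref{def:spin-supertriangle} that transports use induced lifts, combined with the $\iota$-equivariance of the transport formulas, which are linear in the fermions with body-valued coefficients.
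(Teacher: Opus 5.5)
Your proposal is correct and follows the paper's own argument: pull back to the spin cover, apply Theorem~\ref{thm:main}, and use that $\iota$ multiplies a degree-$k$ term by $(-1)^k$, so $\Area$, $J$, $\cB_2$ descend while $\cK_3$ is excluded; you simply spell out the compatibility of additivity with forgetting or inducing spin lifts, which the paper takes for granted. One small slip: the degrees $0,2,2,3$ are not pairwise distinct, but your argument needs only that $\cK_3$ is the sole generator in odd degree, or equivalently the uniqueness clause of Theorem~\ref{thm:main}.
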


\begin{proof}
Forgetting the spin choice identifies the signed common-first coordinates by
\[
(\lambda,\beta_P,\beta_Q,\beta_R)
\longmapsto
(-\lambda,-\beta_P,-\beta_Q,-\beta_R).
\]
A homogeneous term of fermionic degree \(k\) is therefore multiplied by
\((-1)^k\).  A scalar invariant descends to the ordinary supertriangle
quotient exactly when it is fixed by this involution.  The assertion now
follows from Theorem~\ref{thm:main}: the three even generators are fixed and
the degree-three generator is anti-invariant.
\end{proof}

All four generators $\Area,J,\cB_2$ and $\cK_3$ will be explicitly described in
the ensuing subsections using the notation of 
Section~\ref{sec:subdivision-calculus}.

\subsection{Degree zero}

There are no fermions in degree zero, so the problem reduces immediately to
the classical valuation problem on the body triangle for which continuity at the
degenerate faces is sufficient.

\begin{proposition}[Degree zero]
\label{prop:degree0-main} 
The degree-zero part of a ${\mathcal C}^0$-regular cyclic additive scalar invariant of
hyperbolic spin supertriangles is a scalar multiple of the hyperbolic area
$\Area$.  This generator is spin-independent and therefore also defines the
degree-zero invariant of ordinary supertriangles.
\end{proposition}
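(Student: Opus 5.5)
The plan is to reduce to the classical Klain-type valuation theorem for the hyperbolic plane, after checking that the degree-zero part of the additivity equation involves only body data. Write $F=F_0+\dots+F_4$ as in the Introduction, with $F_0=f(a,b,c)$ a function of the three body side lengths alone. It is cyclic, and by isometry invariance it is a function on congruence classes of non-degenerate body triangles. Apply the additivity identity $F(PQR)=\mathsf T(F(PTR))+\mathsf T(F(TQR))$ and take bodies, i.e. set all fermions to zero.

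The transport formulas of Section~\ref{sec:subdivision-calculus} send daughter fermions to linear combinations of parent fermions. Examples are $\lambda^\ell=g\lambda$ and $\beta_T^\ell=S\beta_P+U\beta_Q+\ell_T\lambda$, with body coefficients. So transport preserves fermionic degree, and the degree-zero projection of a daughter's $F$ is just $F_0$ evaluated on the daughter's body lengths. Coefficient functions are evaluated on body lengths by convention, so the nilpotent corrections to superdistances contribute only in positive degree. Hence the degree-zero component of additivity reads
\[
f(\text{body of }PQR)=f(\text{body of }PTR)+f(\text{body of }TQR)
\]
for every geodesic cut from a vertex to the opposite side. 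By cyclicity, this holds for cuts of any side.

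The next step is to promote this to a valuation. Cevian cuts generate all geodesic triangulations of a triangle. Moreover, any two triangulations of a polygon into triangles have a common refinement obtainable by cevian subdivisions, via the standard argument with refinements by lines through vertices. It follows that $f$ extends consistently to a finitely additive, isometry-invariant function on hyperbolic polygons. The $\mathcal C^0$ hypothesis supplies continuity up to degenerate triangles, where $f$ must vanish: cutting off a degenerate sliver and using additivity together with continuity forces $f\to0$. By the classification of continuous invariant valuations on $\mathbb H^2$ (\cite{Klain2006HyperbolicValuations}), in dimension two with the Euler characteristic and perimeter terms killed by simple additivity and vanishing on degenerate faces, $f=\mathsf A\,\Area$.

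The main obstacle is the well-definedness of this extension. As an alternative, I would argue directly. Fix the angle at $R$ and cut the opposite side repeatedly. Continuity then makes $f$ additive in the subtended angle, via the associated area, giving a Cauchy equation. Combined with continuity and the agreement with angle defect, this pins down $f$ up to scale.

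Spin-independence is immediate, since $\iota$ fixes all even coordinates and $F_0$ contains no fermions. Thus $\iota^*\Area=\Area$, and the invariant descends to ordinary supertriangles.
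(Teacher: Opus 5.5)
This is essentially the paper's proof: the degree-zero component of additivity involves only body data, a limiting cut with a collapsing daughter forces vanishing on degenerate triangles, triangulation with common refinements gives a continuous isometry-invariant simple valuation, Klain's theorem makes it a multiple of $\Area$, and spin-independence is immediate since $\iota$ fixes all even data. One correction: cevian cuts do \emph{not} generate all triangulations of a triangle (the three-triangle star from an interior point is not cevian-generated; it only has a cevian-generated refinement), but the common-refinement lemma you state next is the one the argument actually uses and it suffices, so the vague Cauchy-equation fallback is unnecessary.
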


\begin{proof}
In degree zero, the invariant $I_0(a,b,c)$ depends only on the body triangle.  Its regular
extension to a collapsed side vanishes automatically: taking the limit of a
side cut in which one daughter collapses gives
\[
I_0(t,t,0)=I_0(t,t,0)+I_0(t,t,0),
\]
and hence \(I_0(t,t,0)=0\).  Additivity under side cuts therefore extends it,
by triangulation, to a continuous isometry-invariant simple valuation of
convex hyperbolic polygons; the value is independent of the triangulation
because two geodesic triangulations admit a common refinement.  The result then
follows from Klain's characterization
\cite{Klain2006HyperbolicValuations}
of hyperbolic area as the unique continuous isometry-invariant simple
valuation on convex polygons in the hyperbolic plane.
\end{proof}

\subsection{Degree one}

Degree one already vanishes after passage to ordinary supertriangles, because
the spin involution changes its sign.  The stronger statement proved here is
that it vanishes on the spin cover as well.  We assume only $C^1$ one-sided
regularity at the degenerate body faces.

\begin{proposition}[No degree-one generator]\label{prop:degree-one-zero}
The only \(C^1\)-regular cyclic additive degree-one scalar invariant of hyperbolic spin supertriangles vanishes.
\end{proposition}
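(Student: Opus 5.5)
Write a degree-one invariant in the common-first coordinates of the parent as
\[
I_1=f(a,b,c)\,\lambda+h_P(a,b,c)\,\beta_P+h_Q(a,b,c)\,\beta_Q+h_R(a,b,c)\,\beta_R ,
\]
with coefficient functions of the body side lengths. Cyclicity relates $h_P,h_Q,h_R$ to a single function $h$ by cyclic permutation of $(a,b,c)$, and makes $f$ cyclically symmetric. Impose additivity under the cut $T\in PQ$ using the transport formulas of Section~\ref{sec:subdivision-calculus}. In degree one these are linear:
\[
\lambda^\ell=g\lambda,\qquad \lambda^r=d\lambda,\qquad \beta_T^{\ell}=\beta_T^{r}=S\beta_P+U\beta_Q+(\cdot)\lambda,
\]
and $\beta_P,\beta_Q,\beta_R$ change only by multiples of $\lambda$. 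Since $\lambda,\beta_P,\beta_Q,\beta_R$ are independent, additivity splits into four scalar functional equations, one coefficient for each fermion.

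First I handle the $\beta$ sector. The $\beta_R$ coefficient gives
\[
h_R(a,b,c)=h_R(m,b,x)+h_R(a,m,y),
\]
which is an additivity law for the weight attached to the vertex opposite the cut side. The $\beta_P$ and $\beta_Q$ coefficients give
\[
h_P(a,b,c)=h_P^{\ell}+S\bigl(h_T^{\ell}+h_T^{r}\bigr),
\]
and a similar equation for $h_Q$. Here $h^{\ell}$ and $h^{r}$ are evaluated on the daughter lengths, and $h_T$ denotes the coefficient at the vertex $T$ in the relevant cyclic position.

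To exploit these I use $C^1$ regularity at the degenerate faces. Let $y\to0$, so the right daughter collapses and $m\to a$. The collapsed-daughter argument of Proposition~\ref{prop:degree0-main} first gives vanishing on collapsed triangles. Differentiating in $y$ at $y=0$ then yields first-order ODEs in $c$ for $h$. I expect these, combined with cyclicity and with cutting each of the three sides, to force $h\equiv0$. The mechanism is this: the coefficient $S=\sinh y/\sinh c$ transfers weight from $T$ to $P$, and this is incompatible with the pure additivity required at $R$ unless everything vanishes.

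With $h=0$, the $\lambda$ equation becomes
\[
f(a,b,c)=g\,f(m,b,x)+d\,f(a,m,y).
\]
Proposition~\ref{prop:sideadditivity} shows the factors $g$ and $d$ have the $\tanh$ structure. So I set $\tilde f=f\cdot\coth(c/2)$ relative to the cut side. Then $\tilde f$ should satisfy a plain additivity law in the cut side but a twisted one in the other sides. Cyclicity of $f$, tested by cutting each of the three sides, then demands consistency between three incompatible weightings. Differentiating the collapsed limit in $y$ and in $x$ gives a linear ODE system, which I plan to solve explicitly. I expect its only solution compatible with symmetry and $C^1$ vanishing at degenerate faces to be $f=0$.

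The main obstacle is the $\lambda$ sector, which also mixes with the $\ell$- and $r$-terms if $h$ were not yet known to vanish. The transport coefficients $\ell_P,\ell_T,\ell_R,r_Q,r_T,r_R$ involve $G$ and $D$ with square roots, and showing the derived ODE system admits only the zero solution needs careful expansion at $y=0$. I would first try to decouple the system by choosing the degenerate limits $y\to0$ and $x\to0$ separately, and by comparing cuts on two different sides of the same triangle.
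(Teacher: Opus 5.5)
The gap is in your $\beta$ sector, starting with its first concrete claim. Write your single coefficient as $u$, so that $h_P=u(a,b,c)$, $h_Q=u(b,c,a)$ and $h_R=u(c,a,b)$.

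Collapsing a daughter does \emph{not} give vanishing on collapsed triangles in degree one. The $\beta_R$ equation yields $u(0,t,t)=0$. But the $\beta_P$ equation as $x\to0$ (where $S\to1$), or the $\beta_Q$ equation as $y\to0$ (where $U\to1$), only yields $u(t,t,0)+u(t,0,t)=0$, because the collapsed daughter's $T$-coefficient is added onto $P$ or $Q$. The third equation is vacuous in each limit. So, unlike in degree zero, a free trace $\rho(t)=u(t,t,0)=-u(t,0,t)$ survives. Any first-order expansion at $y=0$ or Riemann-sum representation you derive still carries this undetermined boundary data, and your ``I expect these \ldots\ to force $h\equiv0$'' is exactly where the proof has to happen.

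The missing idea is to feed \emph{collinear} parents into the three $\beta$ equations. Cutting a collinear parent at its middle vertex ($T=R$, $m=0$) makes both daughters collapsed-side triangles. This expresses $u$ on the whole degenerate locus through $\rho$: for $c=x+y$,
$u(y,x,c)=S[\rho(x)-\rho(y)]$, $u(x,c,y)=U[\rho(x)-\rho(y)]$ and $u(c,y,x)=\rho(y)-\rho(x)$.
Next take a collinear parent with $R$ between $P$ and $Q$, $PR=p$, $RQ=q$, and cut at $T$ between $P$ and $R$ with $PT=x$, $TR=m$. Substituting the previous formulas into the $\beta_P$ equation gives $\sinh p\,[\rho(p)-\rho(m)]=-\sinh x\,[\rho(m)-\rho(x)]$. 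At $m=x$ this says $\rho(2x)=\rho(x)$, hence $\rho\equiv\rho(0)=0$. Only then does $C^1$ regularity carry the vanishing inward. At $(0,r,r)$ two transverse degenerate faces on which $u=0$ meet, so $du=0$ there. Subdividing $PQ$ into $n$ equal pieces then gives $u(c,a,b)=\sum_i u(\delta,r_i,r_{i-1})=n\,o(c/n)\to0$.

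The $\lambda$ sector, which you call the main obstacle, is the easier part once $u=0$, since the $\ell$- and $r$-terms drop out. Your normalization there is wrong, however. By $\tanh(c/2)=g\tanh(x/2)+d\tanh(y/2)$ from Proposition~\ref{prop:sideadditivity}, the function $\tilde f=f\coth(c/2)$ obeys a convex-combination law with weights $g\tanh(x/2)/\tanh(c/2)$ and $d\tanh(y/2)/\tanh(c/2)$. That law is solved by constants, i.e.\ by the non-cyclic side-invariant $f=\tanh(c/2)$; it is not plain additivity. The weight that actually linearizes is $\phi(t)=2/(1+e^{-t})$: since $g=\phi(c)/\phi(x)$ and $d=e^{-x}\phi(c)/\phi(y)$, the function $f/\phi(c)$ satisfies plain additivity in the first daughter and $e^{-x}$-weighted additivity in the second.

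What closes this sector is then concrete:
\begin{enumerate}
\item Letting $x\to0$ gives $f(t,t,0)=0$, because $g\to e^{c/2}/\cosh(c/2)\neq0$.
\item Cyclicity transfers this to $(t,0,t)$ and $(0,t,t)$.
\item The first-order expansion of the cut law along $PQ$ gives $f(a,b,c)=\frac{2}{1+e^{-c}}\int_0^c e^{-s}\bigl(A(r(s))r'(s)+B(r(s))\bigr)\,ds$, where $A,B$ are the derivatives of $f$ in its first and third slots at $(t,t,0)$.
\item Evaluating on the degenerate triangles $(t,0,t)$, where $r(s)=s$, and $(0,t,t)$, where $r(s)=t-s$, forces $A+B=0$ and $B-A=0$.
\end{enumerate}
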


\begin{proof}

Write the putative degree-one invariant as
\[
I_1^{PQR}=\lambda f(a,b,c)
+\beta_P u(a,b,c)
+\beta_Q u(b,c,a)
+\beta_R u(c,a,b).
\]
%Let \(T\in PQ\) with $$ a=d(Q_\#,R_\#),~ b=d(R_\#,P_\#),~c=d(P_\#,Q_\#)$$ setting also $$x=d(P_\#,T_\#),~ y=d(T_\#,Q_\#),~ c=x+y,~ m=d(T_\#,R_\#) $$ for these body distances as in Figure \ref{fig:body}.  As before, \[ S=\frac{{\rm sinh}\,y}{{\rm sinh}\,c},\qquad U=\frac{{\rm sinh}\,x}{{\rm sinh}\,c}.\]
The common-first fermions of the two subtriangles satisfy
\[
\lambda^\ell=g\lambda,\qquad
\lambda^r=d\lambda, 
\]
%where \begin{equation}\label{eq:gd-degree-one} g= e^{y/2}\frac{{\rm cosh}(x/2)}{{\rm cosh}(c/2)}, \qquad d= e^{-x/2}\frac{{\rm cosh}(y/2)}{{\rm cosh}(c/2)}.\end{equation}
and the full transport of the second fermions contains additional terms
proportional to \(\lambda\).  These additional terms affect only the
coefficient of \(\lambda\) in a degree-one expression.  They do not
affect the coefficients of
\(\beta_P,\beta_Q,\beta_R\).  We therefore first compare those three
coefficients.  Using
\[
\beta_T^\ell\equiv\beta_T^r
\equiv S\beta_P+U\beta_Q\pmod{\lambda},
\]
additivity $I_1^{PQR}=I_1^{PTR}+I_1^{QRT}$ gives
\begin{equation}\label{eq:u1-degree-one}
u(a,b,c)
=
u(m,b,x)
+S\,[u(b,x,m)
+u(a,m,y)],
\end{equation}
\begin{equation}\label{eq:u2-degree-one}
u(b,c,a)
=u(m,y,a)+U[
u(b,x,m)
+\,u(a,m,y)],
\end{equation}
\begin{equation}\label{eq:u3-degree-one}
u(c,a,b)
=
u(x,m,b)
+u(y,a,m).
\end{equation}

We first prove that \(u=0\).  To this end, let us begin by
showing that $u$ vanishes on the degenerate locus.  
Sending $T\to P$ gives $x\to 0$, $y\to c$, $m\to b$, hence $S\to 1$ and $U\to 0$.
Thus, (\ref{eq:u3-degree-one}) gives $u(0,t,t)=0$, (\ref{eq:u1-degree-one}) gives $u(t,t,0)=-u(t,0,t)$, and so
\begin{equation}
u(0,t,t)=0,\qquad u(t,t,0)=h(t),\qquad u(t,0,t)=-h(t), \label{tag4}
\end{equation}
where\[
h(t)=u(t,t,0).
\]
(\ref{eq:u1-degree-one}-\ref{eq:u3-degree-one}) respectively give

\begin{equation}
u(y,x,c)=\frac{\sh y}{\sh(x+y)}\bigl[h(x)-h(y)\bigr], \label{tag5}
\end{equation}
\begin{equation}
u(x,c,y)=\frac{\sh x}{\sh(x+y)}\bigl[h(x)-h(y)\bigr], \label{tag6}
\end{equation}
\begin{equation}
u(c,y,x)=h(y)-h(x). \label{tag7}
\end{equation}

We claim that $h=0$.  To see this, consider a degenerate triangle with
\[
PR=p,\qquad RQ=q,\qquad PQ=p+q,
\]
and choose $T$ between $P$ and $R$, with
\[
PT=x,\qquad TR=m,\qquad p=x+m.
\]
For this cut, (\ref{eq:u1-degree-one}) reads
\begin{equation}\label{eq:(*)}
u(q,p,p+q)=u(m,p,x)
 +\frac{\sh(m+q)}{\sh(p+q)}
 \biggl[u(p,x,m)+u(q,m,m+q)\biggr].
\end{equation}
Meanwhile (\ref{tag5}) with $y\to q, x\to p$ gives
\[
u(q,p,p+q)=\frac{\sh q}{\sh (p+q)}\,[h(p)-h(q)],
\]
(\ref{tag6}) with $x\to m, y\to x$ gives
\[
u(m,p,x)=\frac{\sh m}{\sh p}\,[h(m)-h(x)],
\]
and (\ref{tag7}) with $x\to m, y\to x$ gives
\[
u(p,x,m)=[h(x)-h(m)].
\]

Moreover, (\ref{tag5}) with $x\to m, y\to q$ gives
\[
u(q,m,m+q)=\frac{\sh q}{\sh (m+q)}\,[h(m)-h(q)],
\] 
so (\ref{eq:(*)}) becomes
$$\begin{aligned}
\frac{\sh q}{\sh (p+q)}\,[h(p)-h(q)]&=\frac{\sh m}{\sh p}\,[h(m)-h(x)]\\
&+\frac{\sh (m+q)}{\sh (p+q)}\,\biggl [
[h(x)-h(m)]+\frac{\sh q}{\sh(m+q)}[h(m)-h(q)]
\biggr].\\\end{aligned}$$
Multiply through by $\sh p\,\sh(p+q)$ and cancel the like term
$[\sh p\,\sh q]\,h(q)$ to get
$$
\sh p\,\sh q\,[h(p)-h(m)]
=\biggl\{\sh(p+q)\,\sh m-\sh p\,\sh(m+q)\biggr\}
[h(m)-h(x)].
$$
Since $p=m+x$, the expression in curly braces is
$-\sh q\,\sh x$.  Dividing the previous equation by $\sh q$ gives
$$\sh p\,[h(p)-h(m)]=-\sh x\,[h(m)-h(x)].$$
Finally taking $m=x, p=2x$, gives $\sh 2x\,[h(2x)-h(x)]=0$, so we conclude
\[
h(2x)=h(x),
\]
whence
\[
h(x)=h(x/2)=h(x/2^2)=\cdots .
\]
By regularity at the origin and (\ref{tag4}), $h(0)=0$, hence $h\equiv0$, and so
(\ref{tag5}-\ref{tag7}) imply that
\begin{equation}
u=0
\quad\text{on every degenerate triangle}. \label{tag9}
\end{equation}

It remains to propagate this to non-degenerate triangles as follows.
fix an arbitrary non-degenerate triangle and subdivide $PQ$ into $n$ equal pieces
\[
P=T_0,T_1,\ldots,T_n=Q,
\qquad d(T_{i-1},T_i)=\delta=\frac cn.
\]
Set $r_i=d(R,T_i)$.  Repeated application of
\eqref{eq:u3-degree-one} gives the exact identity
\[
u(c,a,b)=\sum_{i=1}^n u(\delta,r_i,r_{i-1}). 
\]
By the triangle inequality,
\[
|r_i-r_{i-1}|\leq\delta,
\]
so every summand approaches a point $(0,r,r)$ as $\delta\to0$.
At $(0,r,r)$ two degenerate faces meet transversely.  Since $u$ vanishes
on both faces by (\ref{tag9}), its differential annihilates the tangent
space of each face.  Those two tangent spaces span the ambient
three-dimensional side-length space, so $du=0$ at their intersection.
The assumed $C^1$ boundary regularity therefore gives
\[
u(\delta,r_i,r_{i-1})=o(\delta)
\]
uniformly for $r_i$ in the compact interval swept out by the fixed side,
whence
\[
|u(c,a,b)|\leq n\,o(c/n)\longrightarrow0.
\]
Thus, $u(c,a,b)=0$ for every triangle, as desired.

Having shown that $u\equiv 0$,
 the invariant reduces to
$
F_1=\lambda f(a,b,c)
$, and it remains to prove the vanishing of $f$.
In particular, the lower \(\lambda\)-corrections occurring in the
full transport of the second fermions no longer contribute.
Since the signed common-first fermion \(\lambda\) is cyclic after
fixing the spin structure, cyclicity of \(F_1\) implies
%\begin{equation}\label{eq:f-cyclic}
$f(a,b,c)=f(b,c,a)=f(c,a,b)$.
%\end{equation}
Additivity across the cut \(T\in PQ\), together with
\(\lambda^\ell=g\lambda\) and \(\lambda^r=d\lambda\), gives
\begin{equation}\label{eq:f-subdivision}
f(a,b,c)
=
g\,f(m,b,x)+d\,f(a,m,y).
\end{equation}
Letting \(x\to0\) gives
$
m\to b$, $ y\to c
$
and
$
g\longrightarrow
\frac{e^{c/2}}{{\rm cosh}(c/2)},
d\longrightarrow1,
$
so \eqref{eq:f-subdivision} becomes
\[
f(a,b,c)
=
\frac{e^{c/2}}{{\rm cosh}(c/2)}\,f(b,b,0)
+
f(a,b,c).
\]
Since
$
\frac{e^{c/2}}{{\rm cosh}(c/2)}\neq0,
$
we obtain
%\begin{equation}\label{eq:f-boundary-one}
$f(t,t,0)=0$,
%\end{equation}
for every \(t>0\), and by cyclicity,
\begin{equation}\label{eq:f-all-boundaries}
f(t,t,0)=f(t,0,t)=f(0,t,t)=0.
\end{equation}

Using dummy variables
\(r,s,q\), consider the partial derivatives
\[
A(t)
=
\left.
\frac{\partial}{\partial r}f(r,s,q)
\right|_{(r,s,q)=(t,t,0)}
\]
and
\[
B(t)
=
\left.
\frac{\partial}{\partial q}f(r,s,q)
\right|_{(r,s,q)=(t,t,0)}
\]
at $(r,s,q)=(t,t,0)$.
Since \(f(t,t,0)=0\), for  \(\delta,\varepsilon\to0\) we have
\begin{equation}\label{eq:f-taylor}
f(t+\delta,t,\varepsilon)
=
A(t)\delta+B(t)\varepsilon
+o(|\delta|+|\varepsilon|).
\end{equation}

Again, fix an arbitrary non-degenerate triangle using the same
arclength parametrization \(X_s\) of \(PQ\), with
$
r(s)=d(X_s,R), r(0)=b, r(c)=a,
$
and define
\[
\psi(s)=f(r(s),b,s).
\]
For \(h>0\) with $
x=s, y=h, m=r(s)
$, apply \eqref{eq:f-subdivision} to the triangle
\((P,X_{s+h},R)\) cut at \(X_s\), so
\begin{equation}\label{eq:psi-subdivision}
\psi(s+h)
=
g_{s,h}\,\psi(s)
+
d_{s,h}\,
f(r(s+h),r(s),h),
\end{equation}
where
\[
g_{s,h}
=
e^{h/2}
\frac{{\rm cosh}(s/2)}
     {{\rm cosh}((s+h)/2)}
\]
and
\[
d_{s,h}
=
e^{-s/2}
\frac{{\rm cosh}(h/2)}
     {{\rm cosh}((s+h)/2)}.
\]
Also, at \(h=0\), we have
$
g_{s,0}=1, d_{s,0}
=
\frac{e^{-s/2}}{{\rm cosh}(s/2)}
=
\frac{2}{1+e^s}
$, and
\[
\left.
\frac{\partial g_{s,h}}{\partial h}
\right|_{h=0}
=
\frac12-\frac12\tanh(s/2)
=
\frac{1}{1+e^s}.
\]
It follows that
\begin{equation}\label{eq:g-small-h}
g_{s,h}
=
1+\frac{h}{1+e^s}+o(h).
\end{equation}

Meanwhile,
$
r(s+h)=r(s)+h r'(s)+o(h)
$,
so applying \eqref{eq:f-taylor} at
$
(t,t,0)=(r(s),r(s),0)
$
gives
\[
f(r(s+h),r(s),h)
=
hA(r(s))r'(s)
+
hB(r(s))
+
o(h).
\]
Substitute this and \eqref{eq:g-small-h} into
\eqref{eq:psi-subdivision}, subtract \(\psi(s)\), divide by \(h\),
and let \(h\to0^+\) to obtain
\begin{equation}\label{eq:psi-ode}
\psi'(s)
=
\frac{1}{1+e^s}\psi(s)
+
\frac{2}{1+e^s}
\left(
A(r(s))r'(s)+B(r(s))
\right).
\end{equation}
Setting
$
\mu(s)=1+e^{-s}
$
gives
$
\mu'(s)=-e^{-s}
$
and
$
\frac{\mu(s)}{1+e^s}=e^{-s}
$.
Multiplying \eqref{eq:psi-ode} by \(\mu(s)\) therefore gives
\[
\frac{d}{ds}\bigl(\mu(s)\psi(s)\bigr)
=
2e^{-s}
\left(
A(r(s))r'(s)+B(r(s))
\right),
\]
and since
$
\psi(0)=f(b,b,0)=0
$,
integration from \(0\) to \(c\) yields
\begin{equation}\label{eq:f-integral}
f(a,b,c)
=
\frac{2}{1+e^{-c}}
\int_0^c
e^{-s}
\left(
A(r(s))r'(s)+B(r(s))
\right)\,ds.
\end{equation}

We now use two degenerate configurations in
\eqref{eq:f-all-boundaries}.  Equation \eqref{eq:f-integral} extends
to them by the assumed \(C^1\) regularity.
First, consider
$
f(t,0,t)=0
$,
where the side data are
$
(a,b,c)=(t,0,t)
$
with
\(R=P\).  Along \(PQ\), we have
$
r(s)=d(X_s,R)=d(X_s,P)=s,
r'(s)=1
$, so
 \eqref{eq:f-integral} gives
\[
0
=
\frac{2}{1+e^{-t}}
\int_0^t e^{-s}
\bigl(A(s)+B(s)\bigr)\,ds,
\]
and so
\[
\int_0^t e^{-s}
\bigl(A(s)+B(s)\bigr)\,ds=0
\]
for every \(t>0\).  Differentiating with respect to \(t\) gives
\begin{equation}\label{eq:alpha-plus-gamma}
A(t)+B(t)=0.
\end{equation}

Next consider
$
f(0,t,t)=0.
$ with
$
(a,b,c)=(0,t,t),
$
so \(R=Q\), and
$
r(s)=d(X_s,Q)=t-s,
r'(s)=-1$.
Equation \eqref{eq:f-integral} gives
\[
0
=
\frac{2}{1+e^{-t}}
\int_0^t e^{-s}
\bigl(
-A(t-s)+B(t-s)
\bigr)\,ds,
\]
whence
\[
\int_0^t e^{-s}
\bigl(
-A(t-s)+B(t-s)
\bigr)\,ds=0.
\]
With the change of variable \(v=t-s\), this becomes
\[
e^{-t}
\int_0^t
e^v\bigl(-A(v)+B(v)\bigr)\,dv=0,
\]
and therefore
\[
\int_0^t
e^v\bigl(-A(v)+B(v)\bigr)\,dv=0.
\]
Differentiating with respect to \(t\) this time gives
\begin{equation}\label{eq:minus-alpha-plus-gamma}
-A(t)+B(t)=0.
\end{equation}

Equations \eqref{eq:alpha-plus-gamma} and
\eqref{eq:minus-alpha-plus-gamma} therefore imply
\[
A(t)=B(t)=0
\]
for every \(t>0\), so
\eqref{eq:f-integral} now gives
$
f(a,b,c)=0
$
for every non-degenerate triangle, as required.
\end{proof}

\subsection{Degree two}

Degree two is the first genuinely nontrivial sector.  The
$\beta\beta$-projection first produces a new projected side density $E_B$ and its
additive completion $\cB_2$, and the kernel of this projection consists of
$\lambda\beta$ terms.  After rewriting these terms as an ordinary one-form on
the body side-length space, additivity forces that one-form to be a multiple
of $d\Area$, producing the second generator
\[
J=\delta_{\!f}\Area\,.
\]
The degree-two space is therefore two-dimensional.  One-sided $C^2$ 
regularity is assumed at the degenerate faces.

\begin{proposition}[The degree-two solution space]
\label{prop:degree-two-class}
The vector space of 
$C^2$-regular cyclic additive scalar invariants 
\[
\begin{aligned}
I_2^{PQR}(a,b,c)={}&
 u(a,b,c)\,\beta_P\beta_Q
 +u(b,c,a)\,\beta_Q\beta_R
 +u(c,a,b)\,\beta_R\beta_P\\
&+\lambda\bigl[
 f(a,b,c)\,\beta_P
 +f(b,c,a)\,\beta_Q
 +f(c,a,b)\,\beta_R
 \bigr]
\end{aligned}
\]
of hyperbolic spin supertriangles in degree two has dimension two.
More explicitly, one generator is the fermionic first variation
\[
J=\delta_{\!f}\Area,
\]
 of the ordinary hyperbolic area of the body
triangle.  Intrinsically, $J=D_{\rm super}-D_{\rm body}$; in common-first
gauge it agrees with the degree-two correction computed in \cite{defect}.
The other generator is characterized by its \(\beta\beta\)-projection
\[
E_B=
\beta_P\beta_Q{c\over\sinh c}
+\beta_Q\beta_R{a\over\sinh a}
+\beta_R\beta_P{b\over\sinh b},
\]
on an arbitrary non-degenerate triangle.  Both generators have even
fermionic degree, so they are fixed by \(\iota\) and descend unchanged to
ordinary supertriangles.

\end{proposition}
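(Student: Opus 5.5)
Following the degree-one argument, we would first impose additivity $I_2^{PQR}=I_2^{PTR}+I_2^{QRT}$ modulo $\lambda$. The transport formulas give $\beta_T^\ell\equiv\beta_T^r\equiv S\beta_P+U\beta_Q$, $\beta_P^\ell\equiv\beta_P$, $\beta_Q^r\equiv\beta_Q$, $\beta_R^{\ell,r}\equiv\beta_R$ modulo $\lambda$, so the $\beta\beta$ coefficients yield three functional equations for $u$. Comparing the coefficients of $\beta_P\beta_Q$, $\beta_Q\beta_R$ and $\beta_R\beta_P$ gives
\[
u(a,b,c)=U\,u(m,b,x)+S\,u(a,m,y),
\]
\[
u(b,c,a)=u(y,a,m)+U\,\bigl[u(b,x,m)\ \text{type term}\bigr],
\]
and a third relation of the same shape. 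The $\beta_P\beta_Q$ equation is a one-variable subdivision law along $PQ$. Setting $u(a,b,c)=\sinh c\,\cdot w(a,b,c)$ would turn it into a Cauchy-type additivity in $c$, once one notes $U\sinh c=\sinh x$ and $S\sinh c=\sinh y$. Our guess is that $w$ is forced to depend on $c$ only as a multiple of $c/\sinh^2 c$, which reproduces $E_B$. Concretely, we would first use degenerate limits ($T\to P$) to pin down boundary values, as in \eqref{tag4}. We would then subdivide $PQ$ into $n$ pieces and use $C^1$ regularity at the corner $(0,r,r)$ to reduce $u$ to an integral of a boundary derivative, in the same way as \eqref{eq:f-integral}. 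Finally the two degenerate configurations $R=P$ and $R=Q$ would force that derivative to be a constant multiple of the kernel giving $c/\sinh c$. This shows the $\beta\beta$ projection space is at most one-dimensional, spanned by $E_B$. Separately, one must exhibit an additive completion $\cB_2=E_B+\lambda(\cdots)$. We would construct it by solving the $\lambda\beta$ equations with inhomogeneous terms coming from the $\ell_\bullet,r_\bullet$ corrections applied to $E_B$.

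Next, for the kernel (that is, $u=0$) we treat $\lambda\beta$ terms. Here the transport gives $\lambda^\ell\beta^\ell_P=g\lambda\beta_P$ exactly, since $\lambda\cdot\ell_P\lambda=0$, and similarly for the others. So additivity becomes linear equations for $f$ with coefficients $g,d,S,U$. The key step is to reinterpret $\lambda(f(a,b,c)\beta_P+\dots)$ as a one-form $\omega=f\,da+\dots$ on side-length space. This works because the common-first variation of the Gram entries, by Lemma \ref{lem:inner-products-main}, has $G_{ij}-A_{ij}$ proportional to $\lambda(\mu_i+\mu_j)$. Hence any additive even-degree-two invariant of this form is $\sum_{ij}\partial_{G_{ij}}\Phi\cdot\lambda(\mu_i+\mu_j)(1-A_{ij})$, and additivity of the one-form forces its potential $\Phi$ to be additive in degree zero. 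By Proposition \ref{prop:degree0-main}, $\Phi$ is then a multiple of $\Area$, which gives $J=\delta_f\Area$.

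To justify that every additive $f$-solution is exact, we would run the degree-one ODE method: derive an integral representation of $f$ from boundary derivatives at $(t,t,0)$, using $C^2$ regularity for second-order Taylor terms. We would then compare with the explicit $J$, whose boundary data is computed directly, and show that the difference has vanishing boundary data and hence vanishes. The identification $J=D_{\rm super}-D_{\rm body}$ follows by expanding the super law of cosines to first order in the fermions and using the classical angle defect theorem on the body.

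The main obstacle is the $\beta\beta$ sector. The transport there couples all three cyclic slots of $u$, and the degenerate-face analysis must deliver exactly one free constant rather than zero. We would attack it first by substituting the ansatz $u=\phi(c)$ into the $\beta_P\beta_Q$ equation, which gives $\phi(x+y)\sinh(x+y)^{-1}$-additivity. Then we would prove uniqueness by the $n$-fold subdivision bound as in Proposition \ref{prop:degree-one-zero}.
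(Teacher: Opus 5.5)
Your overall architecture ($\beta\beta$ projection, additive completion, kernel identified with a one-form whose potential is an additive degree-zero invariant) matches the paper's. However, the decisive step in the $\beta\beta$ sector is missing: you never show that the collapsed-side trace $\rho(r)=u(r,r,0)$ is constant, and the mechanism you propose would not show it. You transplant the degree-one argument, but that argument rested on the trace vanishing, and here it does not. The $T\to P$ and $T\to Q$ limits of the three $u$-equations yield only $u(0,t,t)=u(t,0,t)$. Moreover $\rho\equiv K$ for the generator $Kc/\sinh c$ itself. So $u$ is represented by the trace $\rho$, not by boundary derivatives, and the $n$-fold $o(\delta)$ bound has nothing to act on. (Also, the additive normalization is $W=\sinh c\,u$; with your $u=\sinh c\,w$ it is $\sinh^2c\,w$ that is additive.)

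The $\beta_P\beta_Q$ equation alone gives exactly $u(a,b,c)=\frac{1}{\sinh c}\int_{PQ}\rho(d(X,R))\,ds$ with $\rho$ arbitrary. On the configurations $R=P$ and $R=Q$ this representation returns $I(t)/\sinh t$ in both cases, where $I(t)=\int_0^t\rho$, so those configurations impose nothing by themselves. The ansatz $u=\phi(c)$ only exhibits a solution; it does not prove uniqueness. The constraint comes from the equation coupling the cyclic slots, $u(b,c,a)=u(m,y,a)+U\{u(b,x,m)-u(y,a,m)\}$, which you wrote incorrectly. It must be evaluated on a collinear configuration after substituting the density representation. The paper uses $R-P-T-Q$ and the $c^2$ coefficient of \eqref{eq:exact-collinear-proof} to get $\rho'(0)=0$ and $q'+2\coth b\,q=0$ for $q=\rho-\rho(0)$, whence $\rho\equiv K$ by regularity at $b=0$. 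Your instinct about $R=P$ can be rescued this way: at $b=0$ that exact identity reduces to $I(c)=I(fc)+I((1-f)c)$, which forces $\rho$ to be constant at once.

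The other two ingredients are asserted rather than proved. For the completion of $E_B$, solvability of the inhomogeneous $\lambda\beta$ system is the entire content. The paper obtains it from:
\begin{itemize}
\item the two-cut identity \eqref{eq:two-cut-B} for the subdivision defect $\mathcal S$, a cocycle condition coming from flatness of the transport;
\item the endpoint vanishing of $\mathcal S$;
\item the integral $\mathcal C(X_s,X_t,R)=\int_s^t\mathsf T\,\Gamma(r,t)\,dr$, whose defect is exactly $\mathcal S$;
\item cyclic equivariance.
\end{itemize}
Without this you only get dimension at most two.

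In the kernel, writing the invariant as $\sum\partial_{G_{ij}}\Phi\cdot\lambda(\mu_i+\mu_j)(1-A_{ij})$ already assumes the one-form is exact, which is the point to be proved. Also, its components are not the $f$'s; they come from inverting the $\tanh$-matrix sending $(\lambda\beta_P,\lambda\beta_Q,\lambda\beta_R)$ to $(\Delta a,\Delta b,\Delta c)$. The paper proves $d\omega=0$ by making two small cuts along two of the spanning directions $X_P,X_Q,X_R$ in both orders, so that $\oint\omega=o(st)$ while also $\oint\omega=st\,d\omega(X_i,X_j)+o(st)$. It then proceeds in four further steps:
\begin{itemize}
\item it applies the Poincar\'e lemma on the convex domain $\mathcal M$;
\item it uses $f(0,t,t)=0$ to kill the pullback of $\omega$ to the degenerate faces;
\item it normalizes $\Phi$ cyclically;
\item it uses $dD_\Phi=0$ together with $T\to P$ to show $\Phi$ is additive.
\end{itemize}
Only then does it invoke Proposition~\ref{prop:degree0-main}. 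Your fallback via the degree-one ODE method does not say why the boundary data of an arbitrary solution must be proportional to that of $J$. The kernel is one-dimensional rather than zero, and the $f$-equations couple different slots, e.g.\ through the term $gS\,f(b,x,m)$. So the fallback does not fill this gap either.
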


\begin{proof}
The following functional equations for additivity $I_2^{PQR}=I_2^{PTR}+I_2^{QRT}$ arise directly by equating coefficients of degree-two monomials:
\begin{equation}\label{tag21}
u(a,b,c)
=
U\,u(m,b,x)
+S\,u(a,m,y),~~~~~~~~~~~~~~~
\end{equation}
\begin{equation}\label{tag22}
u(b,c,a)
=
u(m,y,a)
+U\bigl\{
u(b,x,m)-u(y,a,m)
\bigr\},
\end{equation}
\begin{equation}\label{tag23}
u(c,a,b)
=
u(x,m,b)
-S\bigl\{
u(b,x,m)-u(y,a,m)
\bigr\}\,,
\end{equation}

\[
\begin{aligned}
f(a,b,c)
&=
\ell_R\,u(x,m,b)
-\ell_T\,u(m,b,x)\\
&+
S\Bigl\{
\ell_P\,u(m,b,x)
-\ell_R\,u(b,x,m)
+r_R\,u(y,a,m)
-r_Q\,u(a,m,y)
\Bigr\}\\
&+
g\,f(m,b,x)
+gS\,f(b,x,m)
+dS\,f(a,m,y),
\end{aligned}
\]

\[
\begin{aligned}
f(b,c,a)
&=
-r_R\,u(m,y,a)
+r_T\,u(a,m,y)
\\
&+
U\Bigl\{
\ell_P\,u(m,b,x)
-\ell_R\,u(b,x,m)
+r_R\,u(y,a,m)
-r_Q\,u(a,m,y)
\Bigr\}
\\
&+
gU\,f(b,x,m)
+d\,f(m,y,a)
+dU\,f(a,m,y),\\
\end{aligned}
\]

\[
\begin{aligned}
f(c,a,b)
&=
\ell_T\,u(b,x,m)
-\ell_P\,u(x,m,b)
\\
&+
r_Q\,u(m,y,a)
-r_T\,u(y,a,m)
\\
&+
g\,f(x,m,b)
+d\,f(y,a,m).
\end{aligned}~~~~~~~~~~~~~~~~~~~~~~~~~~~~~~~~~~~~~~~~~~~~
\]

\noindent The first three additivity
equations are independent of $f$, and we
%\begin{align}  u(a,b,c) &=Uu(m,b,x)+Su(a,m,y),                                      \label{eq:u-one-proof}\\
% u(b,c,a) &=u(m,y,a)+U\{u(b,x,m)-u(y,a,m)\},                         \label{eq:u-two-proof}\\ 
%u(c,a,b)&=u(x,m,b)-S\{u(b,x,m)-u(y,a,m)\},                         \label{eq:u-three-proof}\end{align}
and begin by classifying the $\beta\beta$ projection of $I_2^{PQR}$ which they determine.
%where for convenience we recall from Section \ref{sec:subdivision-calculus} that $a=d(Q,R),~b=d(P,R),~c=d(P,Q)=x+y, ~ U={\sinh x\over\sinh c},~ S={\sinh y\over\sinh c}.$

\medskip
\noindent
\textit{Step 1: The first equation
 gives a one-variable side density.}
Define
$$W(P,Q;R)=\sinh c\,u(a,b,c).$$ Multiplying \eqref{tag21} by $\sinh c$ gives
\[
W(P,Q;R)=W(P,T;R)+W(T,Q;R),
\]
so $W$ is additive when the geodesic segment $PQ$ is subdivided.
Define the degenerate trace
\[
\rho(r)=u(r,r,0).
\]
If $X_h$ is distance $h$ from $X$ along a geodesic, regularity gives
\[
W(X,X_h;R)
 =\sinh h\,u(d(X_h,R),d(X,R),h)
 =h\,\rho(d(X,R))+o(h).
\]
Subdivide $PQ$ into $N$ pieces, use additivity, and let the mesh tend to
zero.  The resulting Riemann sum gives
\begin{equation}
{
 u(a,b,c)
 ={1\over\sinh c}\int_{PQ}\rho(d(X,R))\,ds.}
\label{eq:rho-density-proof}
\end{equation}
No further geometric information enters this formula: the infinitesimal
density is exactly the regular degenerate trace $u(r,r,0)$.

\medskip
\noindent
\textit{Step 2: The second and third equations force the density to be constant.}
Take a collinear degeneration with the points ordered
$
R\,\--\;P\--\,\;T\--\;Q, 
$
and write
$$
d(P,T)=x=f c, ~~
d(T,Q)=y=(1-f)c\,,~~{\rm for}
~0<f<1,$$
so that
$$
a=RQ=b+c,~~
{\rm and}~~
m=RT=b+f c.$$
Now with $\rho$ as above, let
\[
I(t)=\int_0^t\rho(s)\,ds.
\]
Formula \eqref{eq:rho-density-proof} directly gives the following identities
on this degenerate
configuration:
\[
 u(b,c,a)={I(b)+I(c)\over\sinh(b+c)},
\]
\[
 u(m,y,a)={I(b+f c)+I((1-f)c)\over\sinh(b+c)},
\]
\[
 u(b,x,m)={I(b)+I(f c)\over\sinh(b+f c)},
\]
and
\[
 u(y,a,m)={I(b+c)-I((1-f)c)\over\sinh(b+f c)}.
\]
For instance for the first formula, the cyclically permuted coefficient
\(u(b,c,a)\) is obtained from \eqref{eq:rho-density-proof} by integrating
along the side \(QR\), whose opposite vertex is \(P\).  In the collinear
configuration
$
R\;-\;P\;-\;T\;-\;Q
$,
we have
$
RP=b,~ PQ=c,~RQ=a=b+c,$
so
\[
u(b,c,a)
=
\frac{1}{\sinh a}
\int_{QR}\rho\bigl(d(X,P)\bigr)\,ds.
\]
Splitting the integral at \(P\), the distance \(d(X,P)\) decreases from
\(b\) to \(0\) along \(RP\) and then increases from \(0\) to \(c\) along
\(PQ\).  Therefore
\[
\int_{QR}\rho\bigl(d(X,P)\bigr)\,ds
=
\int_0^b\rho(r)\,dr+\int_0^c\rho(r)\,dr
=
I(b)+I(c).
\]
Since \(a=b+c\), this gives the first formula, and the others are analogous.

Substitution in \eqref{tag22} therefore directly gives the exact identity
\begin{align}
0={}&
{I(b)+I(c)-I(b+f c)-I((1-f)c)\over\sinh(b+c)}
\notag\\
&-{\sinh(f c)\over\sinh c\,\sinh(b+f c)}
\biggl\{I(b)+I(f c)-I(b+c)+I((1-f)c)\biggr\}\,,
\label{eq:exact-collinear-proof}
\end{align}
and setting $
\rho_0=\rho(0)$
and
$\rho_b=\rho(b)$ yields
\[
\begin{aligned}
I(c)&=\rho_0c+{1\over2}\rho'(0)c^2+O(c^3),\\
I(f c)&=\rho_0f c+{1\over2}\rho'(0)f^2c^2+O(c^3),\\
I((1-f)c)&=\rho_0(1-f)c
 +{1\over2}\rho'(0)(1-f)^2c^2+O(c^3),\\
I(b+f c)&=I(b)+\rho_bf c
 +{1\over2}\rho'(b)f^2c^2+O(c^3),\\
I(b+c)&=I(b)+\rho_b\,c+{1\over2}\rho'(b)c^2+O(c^3).
\end{aligned}
\]
Furthermore,
\[
{\sinh(f c)\over\sinh c}=f+O(c^2),
\]
\[
{1\over\sinh(b+c)}
 ={1\over\sinh b}-{\cosh b\over\sinh^2b}c+O(c^2),
\]
and
\[
{1\over\sinh(b+f c)}
 ={1\over\sinh b}
 -f{\cosh b\over\sinh^2b}c+O(c^2),
\]
so the coefficient of $c$ in \eqref{eq:exact-collinear-proof} cancels
identically.  The coefficient of $c^2$ is
\[
{f(1-f)\over2\sinh^2b}
\Bigl[
2\cosh b\,(\rho_b-\rho_0)
+\sinh b\,\rho'(b)
+(2f+1)\sinh b\,\rho'(0)
\Bigr].
\]
Since \eqref{eq:exact-collinear-proof} holds for every
$0<f<1$, its coefficient of $f$ inside the square bracket gives
$
\rho'(0)=0$.
Writing $q(b)=\rho(b)-\rho(0)$ in the remaining equation
$\sinh b\,\rho'(b)
+2\cosh b\,[\rho(b)-\rho(0)]=0$, we obtain
$
q'(b)+2\coth b\,q(b)=0,
$ and hence $q(b)={C\over\sinh^2b}$.
Regularity at $b=0$ forces $C=0$, so $\rho$ is some constant
$\rho\equiv K$.
Thus,  \eqref{eq:rho-density-proof} becomes
\[
{u(a,b,c)=K{c\over\sinh c},}
\]
and the projected $\beta\beta$ solution space is consequently one-dimensional.

\bigskip

For $u_0(a,b,c)=\frac{c}{\sinh c}$, the three projected equations 
\[
U{x\over\sinh x}+S{y\over\sinh y}
={x+y\over\sinh c}={c\over\sinh c},
\]
are verified
without approximation,
while in the second and third equations the two internal terms
$\frac{m}{\sinh m}$ cancel.  As was claimed, the corresponding projected invariant is therefore
\[
E_B=
\beta_P\beta_Q{c\over\sinh c}
+\beta_Q\beta_R{a\over\sinh a}
+\beta_R\beta_P{b\over\sinh b}.
\]

\medskip
\noindent
\textit{Step 3: The projected solution $E_B$ has a full additive completion.}

Define the full subdivision defect of \(E_B\) across the cut at \(T\) by
\begin{equation}\label{eq:defect-B}
\begin{aligned}
\mathcal S(P,T,Q;R)
={}&E_B^{PQR}
-\mathsf T_{PTR\to PQR}E_B^{PTR}
-\mathsf T_{TQR\to PQR}E_B^{TQR},
\end{aligned}
\end{equation}
where $\mathsf T$ is the flat transport operator in the previous section.
The projected \(\beta\beta\)-equations have already been verified exactly,
so every \(\beta\beta\)-term cancels in \eqref{eq:defect-B}, whence
\[
\mathcal S(P,T,Q;R)
=
\lambda\bigl(
A_P\beta_P+A_Q\beta_Q+A_R\beta_R
\bigr)
\]
for ordinary scalar coefficient functions \(A_P,A_Q,A_R\).  These
coefficients have the required one-sided regularity because they are obtained
from \(\frac{t}{\sinh t}\) and the explicit common-first transport formulas by
algebraic substitution.  Moreover,
$
\mathcal S(P,P,Q;R)=0=
\mathcal S(P,Q,Q;R)
$
in the one-sided sense because at either endpoint one daughter collapses and the
other daughter is the parent.

We shall use an elementary two-cut identity.  If \(P,T,U,Q\) occur in this
order on \(PQ\), transport every term to the common-first coordinates of
\(PQR\), so that
\begin{equation}\label{eq:two-cut-B}
\mathcal S(P,U,Q;R)+\mathcal S(P,T,U;R)
=
\mathcal S(P,T,Q;R)+\mathcal S(T,U,Q;R).
\end{equation}
Indeed, expanding either side by \eqref{eq:defect-B} and using
flatness gives the same four transported terms,
namely the parent term minus the three final daughter terms.

Now parametrize \(PQ\) by arclength,
\[
X_s\in PQ,
\qquad X_0=P,
\qquad X_c=Q,
\qquad d(P,X_s)=s,
\]
and for \(0\le r<t\le c\), define
\[
\Gamma(r,t)
=
\left.
\frac{\partial}{\partial h}
\right|_{h=0^+}
\mathcal S(X_r,X_{r+h},X_t;R).
\]
The derivative exists by the assumed boundary regularity and the endpoint
vanishing just noted.  It is of type \(\lambda\beta\) in the common-first
coordinates of \(X_rX_tR\).

For \(0\le s<t\le c\), set
\begin{equation}\label{eq:C-B}
\mathcal C(X_s,X_t,R)
=
\int_s^t
\mathsf T_{X_rX_tR\to X_sX_tR}\,\Gamma(r,t)\,dr.
\end{equation}
Every integrand is therefore expressed in the fixed common-first coordinates
of \(X_sX_tR\) before integration.
We claim that the subdivision defect of \(\mathcal C\) is exactly
\(\mathcal S\).  Fix \(s<u<t\), transport all terms to the common-first
coordinates of \(X_sX_tR\), and apply \eqref{eq:two-cut-B} to
\[
X_r,\quad X_{r+h},\quad X_u,\quad X_t,
\qquad s\le r<r+h<u<t.
\]
After rearranging, dividing by \(h\), and letting \(h\downarrow0\), one gets
\[
\frac{d}{dr}\mathcal S(X_r,X_u,X_t;R)
=
\Gamma(r,u)-\Gamma(r,t).
\]
Hence, using \eqref{eq:C-B},
\[
\begin{aligned}
&\mathcal C(X_s,X_t,R)
-\mathcal C(X_s,X_u,R)
-\mathcal C(X_u,X_t,R)\\
&\qquad=
\int_s^u\bigl(\Gamma(r,t)-\Gamma(r,u)\bigr)\,dr\\
&\qquad=
-\int_s^u
\frac{d}{dr}\mathcal S(X_r,X_u,X_t;R)\,dr\\
&\qquad=
\mathcal S(X_s,X_u,X_t;R),
\end{aligned}
\]
since \(\mathcal S(X_u,X_u,X_t;R)=0\).  Thus, we have
\begin{equation}\label{eq:C-defect-B}
\mathcal C(X_s,X_t,R)
-\mathcal C(X_s,X_u,R)
-\mathcal C(X_u,X_t,R)
=
\mathcal S(X_s,X_u,X_t;R).
\end{equation}

Define
\[
B_2^{PQR}=E_B^{PQR}-\mathcal C(P,Q,R).
\]
For a cut \(T\in PQ\), equations \eqref{eq:defect-B} and
\eqref{eq:C-defect-B} give zero subdivision defect for \(B_2\).  Hence
\(B_2\) is additive for cuts of \(PQ\).
Meanwhile, the whole construction is equivariant under the cyclic relabelling
\[
(P,Q,R)\longmapsto(Q,R,P),
\]
and \(E_B\) itself is cyclic.  Consequently
\[
\mathcal C(P,Q,R)
=
\lambda\bigl(
 v(a,b,c)\beta_P
+v(b,c,a)\beta_Q
+v(c,a,b)\beta_R
\bigr)
\]
for a regular scalar function \(v\), so \(B_2\) is cyclic.  Applying the
already proved additivity for the first side after cyclic relabelling gives
additivity for cuts of \(QR\) and \(RP\) as well.  Finally,
\(\mathcal C\) has type \(\lambda\beta\), and therefore
$
(B_2)_{\beta\beta}=E_B$,
so \(E_B\) has a regular cyclic additive completion.  The difference of any two such completions has zero
\(\beta\beta\)-projection and is handled in the next step.

\medskip
\noindent
\textit{Step 4: The kernel of the $\beta\beta$ projection is one-dimensional.}
Consider 
\[
I_2^{PQR}
=\lambda\biggl[f(a,b,c)\beta_P+f(b,c,a)\beta_Q+f(c,a,b)\beta_R\biggr],
\]
and let
\[
\widehat a=a+\Delta a,\qquad
\widehat b=b+\Delta b,\qquad
\widehat c=c+\Delta c
\]
be the three full even side coordinates in the common-first normalization, 
where
$$
\Delta a=-\lambda\, (\beta_Q+\beta _R)\,{\rm tanh}\frac{a}{2},~~~
\Delta b=-\lambda\, (\beta_R+\beta _P)\,{\rm tanh}\frac{b}{2},~~~
\Delta c=-\lambda\, (\beta_P+\beta _Q)\,{\rm tanh}\frac{c}{2}.
$$
Put
\[
X=\lambda\beta_P,
\qquad Y=\lambda\beta_Q,
\qquad Z=\lambda\beta_R,
\]
so that
\[
\begin{pmatrix}
\Delta a\\ \Delta b\\ \Delta c
\end{pmatrix}
=
-\begin{pmatrix}
0&\tanh(a/2)&\tanh(a/2)\\
\tanh(b/2)&0&\tanh(b/2)\\
\tanh(c/2)&\tanh(c/2)&0
\end{pmatrix}
\begin{pmatrix}X\\Y\\Z\end{pmatrix}.
\]
The determinant of this change of variables is
$
-2\tanh(a/2)\tanh(b/2)\tanh(c/2)$,
which is nonzero on the non-degenerate stratum.  Hence there is a unique
ordinary one-form
\[
\omega=\omega_a\,da+\omega_b\,db+\omega_c\,dc
\]
such that
\[
I_2^{PQR}
=\omega_a\Delta a+\omega_b\Delta b+\omega_c\Delta c.
\]

After setting \(u=0\), the last three displayed degree-two subdivision
equations which began this proof are homogeneous equations for \(f\).  Substituting the invertible
relations above, those three equations are exactly the additivity equations
for \(\omega\), with the daughter one-forms pulled back by the corresponding
body side-length maps.  We now prove directly that \(\omega\) is the
differential of an ordinary additive scalar function. 
To this end, let
\[
{\mathcal M}
=
\left\{
(a,b,c)\in{\mathbb R}_{>0}^3:
a<b+c,\quad b<c+a,\quad c<a+b
\right\}
\]
be the ordinary side-length domain of non-degenerate hyperbolic
triangles, and notice that \({\mathcal M}\) is convex as the
intersection of open linear half-spaces.

We first show that the one-form $\omega$
is closed.
To see this, fix a non-degenerate triangle \(PQR\), and let
\(A,B,C\) denote its body angles at \(P,Q,R\), respectively, as in Figure \ref{fig:body}.
There are three elementary infinitesimal side cuts, as follows.
If \(Q\) is moved a distance \(s\) toward \(P\) along \(QP\), then
to first order, 
$dc=-ds,~ db=0,~ da=-\cos B\,ds$, so
$X_Q=(-\cos B,0,-1)$
is the corresponding tangent vector in \((a,b,c)\)-coordinates.
Similarly, moving \(R\) toward \(Q\) along \(RQ\) gives
$
X_R=(-1,-\cos C,0),
$
and moving \(P\) toward \(R\) along \(PR\) gives
$
X_P=(0,-1,-\cos A)$.
These three vectors are linearly independent.  Indeed,
\[
\det
\begin{pmatrix}
-\cos B&-1&0\\
0&-\cos C&-1\\
-1&0&-\cos A
\end{pmatrix}
=
-\bigl(1+\cos A\cos B\cos C\bigr),
\]
and this is nonzero for a non-degenerate triangle, since
$|\cos A\cos B\cos C|<1$.
It is therefore enough to prove that
$
d\omega(X_i,X_j)=0
$
for each pair among \(X_P,X_Q,X_R\).

For this, consider two sufficiently small side cuts, of sizes \(s\) and \(t\),
corresponding to two of these three infinitesimal motions.  Perform
the \(s\)-cut first and then the \(t\)-cut.  Alternatively, perform
the \(t\)-cut first and then the \(s\)-cut.  In both cases, refine the
resulting decomposition to the common subdivision obtained by making
both cuts.

The additivity equation for \(\omega\) says that its value on the
original triangle is the sum of its values on the daughter triangles,
with the daughter coefficients rewritten in the coordinates of the
original triangle.  After the two cuts, the two orders therefore give
the same sum because every final daughter triangle occurs once in each
decomposition, and the common-first substitutions compose
associatively by flatness.

The two orders define the boundary of a small two-parameter loop in
\({\mathcal M}\), whose tangent directions at the initial triangle are
\(X_i\) and \(X_j\).  Because the two final decompositions are literally the
same common refinement, flatness of common-first transport identifies every
final daughter term in the two calculations.  The contributions along the
interior subdivision faces therefore occur with opposite orientations and
cancel exactly; the replacement of the resulting curved four-sided loop by
its tangent parallelogram changes its integral only by \(o(st)\).  Thus,
\[
\oint\omega=o(st).
\]
On the other hand, the ordinary first-order expansion of a \(C^1\)
one-form around the tangent parallelogram gives
\[
\oint\omega
=
st\,d\omega(X_i,X_j)+o(st).
\]
It follows that $d\omega(X_i,X_j)=0$, and
since \(X_P,X_Q,X_R\) span the tangent space of \({\mathcal M}\),
we conclude that $\omega$ is indeed closed.
Since \({\mathcal M}\) is convex and \(d\omega=0\), the Poincaré lemma gives a scalar function
$
\Phi:{\mathcal M}\to\mathbb R
$
such that $\omega=d\Phi$.

We also need the boundary pullback of \(\omega\).  In the third homogeneous
\(f\)-equation, let \(T\to P\), so that  \(x\to0\), \(y\to c\),
\(m\to b\), and the equation reduces to
$
f(0,b,b)=0$.
On the face \(c=0\), \(a=b=t\), we have \(\Delta c=0\), and the
coefficient of \(\lambda\beta_R\) in
\(I_2=\omega_a\Delta a+\omega_b\Delta b+\omega_c\Delta c\) is
$
-\tanh(t/2)\bigl(\omega_a+\omega_b\bigr)$.
Since this coefficient is \(f(0,t,t)=0\), it follows that the pullback of
\(\omega\) to this face is
\[
\iota_c^*\omega
=\bigl(\omega_a+\omega_b\bigr)\,dt=0.
\]
The two cyclically related faces are identical, and therefore the pullback of
\(\omega=d\Phi\) to every degenerate face vanishes, so \(\Phi\) is constant
on each such face.  Moreover \(\omega\) is cyclic, and so
\[
d\bigl(
\Phi(b,c,a)-\Phi(a,b,c)
\bigr)=0.
\]
Since \({\mathcal M}\) is connected, the difference is a constant.
Applying the cyclic permutation three times shows that three times
this constant is zero, so the constant itself is zero.  Thus, 
\(\Phi\) is moreover cyclic, and the three constants on the three degenerate
faces are consequently equal.  Subtracting this common constant
from \(\Phi\), we may and shall
assume
\begin{equation}\label{eq:Phi-boundary-zero}
\Phi(t,t,0)=\Phi(t,0,t)=\Phi(0,t,t)=0.
\end{equation}

It remains only  to prove that \(\Phi\) is additive.
For a cut \(T\in PQ\), define its subdivision defect by
\[
D_\Phi(P,T,Q;R)
=
\Phi(P,Q,R)
-
\Phi(P,T,R)
-
\Phi(T,Q,R).
\]
All three terms are regarded as functions of the ordinary body
configuration.
Taking the ordinary differential gives
\[
\begin{aligned}
dD_\Phi
&=
d\Phi(P,Q,R)
-
d\Phi(P,T,R)
-
d\Phi(T,Q,R)\\
&=
\omega(P,Q,R)
-
\omega(P,T,R)
-
\omega(T,Q,R).
\end{aligned}
\]
The last expression is zero, because the homogeneous
\(f\)-equations just identified are precisely the subdivision equations
satisfied by \(\omega\).  It follows that
$dD_\Phi=0$, and since
the one-cut configuration space is connected, \(D_\Phi\) must be
constant.

Now let \(T\to P\).  The triangle \(PTR\) collapses, while
\(TQR\to PQR\).  By \eqref{eq:Phi-boundary-zero}, $\Phi(P,T,R)\longrightarrow0$,
and hence $
D_\Phi(P,P,Q;R)
=
\Phi(P,Q,R)-0-\Phi(P,Q,R)=0$.
The constant is therefore zero, and so
\[
\Phi(P,Q,R)
=
\Phi(P,T,R)+\Phi(T,Q,R)
\]
for every side cut \(T\in PQ\).  By cyclicity, the same holds for
cuts of the other two sides.

Thus, \(\Phi\) is an ordinary regular cyclic additive scalar invariant.  By
Proposition~\ref{prop:degree0-main}, there is a constant \(k\) such that
$
\Phi=k\,\Area$,
so
\[
\omega=d\Phi=k\,d\Area.
\]
Evaluating this one-form on the actual degree-two common-first side
variation
$
(\Delta a,\Delta b,\Delta c)
$
gives
\[
\begin{aligned}
I_2
&=
\omega_a\Delta a+\omega_b\Delta b+\omega_c\Delta c\\
&=
k\,d\Area(\Delta a,\Delta b,\Delta c).
\end{aligned}
\]
Define
\[
J
=
\delta_{\!f}\Area
=
d\Area(\Delta a,\Delta b,\Delta c),
\]
so 
$
I_2=kJ$, and
the kernel of the \(\beta\beta\)-projection is indeed one-dimensional
and spanned by \(J\).

%Finally, let \(I_2\) be any degree-two additive invariant.  Its \(\beta\beta\)-projection is \(B E_B\) for a unique scalar \(B\). Subtracting \(B\cB_2\) leaves an invariant in the kernel of the projection, hence a unique multiple \(A J\).  Therefore \[ I_2=A J+B\cB_2. \] Since \(J\) has zero \(\beta\beta\)-projection whereas \(\cB_2\) has projection \(E_B\neq0\), these two generators are linearly independent.

\medskip
\noindent
\textit{Step 5: Relation with angle defect and with \cite{defect}.}
The degree-two classification is complete at this point.  
We record separately the
angle-defect interpretation of \(J\), taking care to distinguish the body
area from the super area used in \cite{defect}.

Let
$
\Omega(a,b,c)=\Area(P_\#,Q_\#,R_\#)
$
be the ordinary hyperbolic area, equivalently the ordinary angle defect, of
the body triangle.  In common-first coordinates the full even side
coordinates are
$
\widehat a=a+\Delta a,
~
\widehat b=b+\Delta b,
~
\widehat c=c+\Delta c$.
Since the hyperbolic law of cosines has the same analytic form for the full
even side coordinates of a supertriangle, its super angle defect is given by
$
D_{\rm super}
=
\Omega(\widehat a,\widehat b,\widehat c)$.
If
$
D_{\rm body}=\Omega(a,b,c)$,
then every product of two of \(\Delta a,\Delta b,\Delta c\) vanishes,
because each contains the common factor \(\lambda\).  Hence the nilpotent
Taylor expansion is exactly linear and gives
\[
\begin{aligned}
D_{\rm super}-D_{\rm body}
&=
\Omega(a+\Delta a,b+\Delta b,c+\Delta c)-\Omega(a,b,c)\\
&=
d\Omega(\Delta a,\Delta b,\Delta c)\\
&=
\delta_{\!f}\Area
=J.
\end{aligned}
\]
Thus, the invariant isolated above is precisely the first fermionic
transgression from the body angle defect to the super angle defect.

It remains to compare this with the calculation of \cite{defect}.
The two-form used there to define a super area is not invariant under
the odd part of $\osp$, so the resulting area functional does not define
an intrinsic scalar of arbitrary supertriangles.  Nevertheless, for
triangles already expressed in common-first gauge, the bosonization
calculation used there remains valid.  With the sign convention of
Theorem~4.3 and Lemma~4.4 of \cite{defect}, the bosonization map is
\[
X=(x_1,x_2,x\mid\varphi,\psi)
\longmapsto
\widehat X
=
(1+\varphi\psi)(x_1,x_2,x\mid0,0),
\]
and
$
\widehat x_1\widehat x_2-\widehat x^2
=
(1+2\varphi\psi)(1-2\varphi\psi)=1$.
For a common-first vertex
$
P=((1-\lambda\beta_P)y_P\mid\lambda,\beta_P)
$,
we therefore have
\[
\widehat P
=
(1+\lambda\beta_P)(1-\lambda\beta_P)y_P
=
y_P,
\]
and similarly
$
\widehat Q=y_Q$ and  $\widehat R=y_R$.

Moreover, bosonization sends each supergeodesic edge to the
corresponding ordinary geodesic edge.  Indeed, the even part of every
point of the supergeodesic \(PQ\) lies in
\(\operatorname{span}(y_P,y_Q)\), and bosonization only multiplies
that even part by an even scalar.  The bosonized point has ordinary
Lorentz norm one, and hence lies on
$
\mathbb H^2\cap\operatorname{span}(y_P,y_Q)$, that is, on
the ordinary geodesic joining \(y_P\) to \(y_Q\).
The same argument applies to the other two sides.

Thus, for a triangle already in common-first gauge, the area functional
used in \cite{defect}, which we denote here by \(A_{\rm old}\), satisfies
\[
A_{\rm old}(P,Q,R)
=
\Area(y_P,y_Q,y_R)
=
\Omega(a,b,c)
=
D_{\rm body}.
\]
Consequently, within common-first gauge,
\[
\begin{aligned}
D_{\rm super}-A_{\rm old}
&=
D_{\rm super}-D_{\rm body}\\
&=
d\Omega(\Delta a,\Delta b,\Delta c)\\
&=
\delta_{\!f}\Area\\
&=
J.
\end{aligned}
\]
This is the degree-two formula computed in \cite{defect},
which does actually disprove the Angle Defect Theorem but only for gauged
supertriangles.  The intrinsic identity established here is
\[
{J=D_{\rm super}-D_{\rm body}},
\]
which is independent of any choice of super-area form.
The exact cancellation in common-first gauge explains why the
degree-two discrepancy computed in \cite{defect} survives as the
intrinsic additive invariant \(J\), even though the old area form is
not globally \(\operatorname{OSp}(1|2)\)-invariant. \end{proof}

\subsection{Degree three}

We first classify the cubic $\beta_P\beta_Q\beta_R$ projection, then prove
that its kernel is zero, and finally construct the unique full additive
completion $\cK_3$.  Only \(C^1\) one-sided regularity at the degenerate body faces is required,
including at the common corner \(a=b=c=0\).

\begin{proposition}[The degree-three solution space]
\label{prop:degree-three-class}
The vector space of \(C^1\)-regular cyclic additive degree-three scalar
invariants 
\[
\begin{aligned}
I_3^{PQR}={}&
 v(a,b,c)\,\beta_P\beta_Q\beta_R\\
&+\lambda\Bigl\{
 u(a,b,c)\,\beta_P\beta_Q
 +u(b,c,a)\,\beta_Q\beta_R
 +u(c,a,b)\,\beta_R\beta_P
 \Bigr\}
\end{aligned}
\]
of hyperbolic spin supertriangles is one-dimensional.  A generator is
\[
\cK_3^{PQR}
=
K(a,b,c)\,\beta_P\beta_Q\beta_R
+
\lambda\Bigl\{
U_0(a,b,c)\beta_P\beta_Q
+U_0(b,c,a)\beta_Q\beta_R
+U_0(c,a,b)\beta_R\beta_P
\Bigr\},
\]
where
\[
K(a,b,c)
=
\frac1{\sinh c}
\int_{PQ}\frac{2\,ds}{1+\cosh d(X,R)}.
\]
For a non-degenerate body triangle this may equivalently be written
\[
K(a,b,c)
=
\frac{2\,\Area(P,Q,R)}{\sinh h_c\,\sinh c},
\]
where \(h_c\) is the altitude from \(R_\#\) to the edge \(P_\#Q_\#\) taken with a plus/minus
sign if $R_\#$ lies to the left/right of the oriented segment from $P_\#$ to $Q_\#$.
The integral formula gives
the regular extension to the degenerate faces.  The lower coefficient
\(U_0\) is uniquely determined by additivity.  Moreover,
$
\iota^*\cK_3=-\cK_3$,
so this one-dimensional odd space exists only on the spin refinement and
does not descend to ordinary supertriangles.
\end{proposition}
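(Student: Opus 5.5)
The plan follows the same three-stage pattern as degree two: classify the cubic projection, show its kernel vanishes, and then construct an additive completion by integrating the subdivision defect.

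\emph{Cubic projection.} Since $\beta_T^\ell\equiv\beta_T^r\equiv S\beta_P+U\beta_Q \pmod\lambda$, the coefficient of $\beta_P\beta_Q\beta_R$ in the additivity relation for a cut $T\in PQ$ gives a closed equation for $v$. In the left daughter, $\beta_P\beta_T\beta_R$ contributes $U\,v(m,b,x)$; in the right daughter $(Q,R,T)$, the ordered product $\beta_Q\beta_R\beta_T$ contributes $S\,v(\cdot)$. This yields
\[
v(a,b,c)=U\,v(m,b,x)+S\,v(a,m,y),
\]
which has exactly the form of \eqref{tag21}. As in Step~1 of Proposition~\ref{prop:degree-two-class}, I will set $W=\sinh c\,v$. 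Then $W$ is additive in the segment $PQ$, and $C^1$ regularity gives
\[
v(a,b,c)=\frac1{\sinh c}\int_{PQ}\rho(d(X,R))\,ds, \qquad \rho(r)=v(r,r,0).
\]
Cyclicity of $v$ is the new constraint, because $\beta_P\beta_Q\beta_R$ is cyclic. I will evaluate this cyclicity on collinear degenerate configurations $R$--$P$--$Q$, as in Step~2, and obtain an ODE for $\rho$. The candidate solution is $\rho(r)=K_0\cdot 2/(1+\cosh r)$. I will check cyclicity directly through the area identity: on a non-degenerate triangle, $\int_{PQ}\frac{2\,ds}{1+\cosh d(X,R)}=2\Area/\sinh h_c$. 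This follows by integrating over the fan from $R$ in Fermi coordinates along $PQ$. Consequently $\sinh h_c\sinh c$ is the common value $\sinh a\sinh b\sinh C$ over the three sides, so $K$ is cyclic. Uniqueness comes from solving the ODE and using regularity at $r=0$, which kills the singular branch as in degree two. I expect this step to be the main obstacle. The ODE extracted from the collinear degeneration may be weaker than in degree two, so I plan to use two collinear orderings and Taylor coefficients in the cut fraction $f$ up to order $c^3$ if needed.

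\emph{Kernel.} Setting $v=0$, the $\lambda\beta\beta$ coefficients satisfy homogeneous equations whose transport factors carry an extra $g$ or $d$. I will redo the degree-one argument for $f$. Collapsing $x\to0$ gives $u=0$ on the faces. A first-order expansion at $(t,t,0)$ produces a linear ODE like \eqref{eq:psi-ode}, and its integral representation is tested against the two degenerate faces $(t,0,t)$ and $(0,t,t)$. This forces the boundary derivatives $A=B=0$, hence $u\equiv0$. The $\lambda\beta\beta$ equations are coupled between $u(a,b,c)$, $u(b,c,a)$ and $u(c,a,b)$, so I will first treat the $\beta_P\beta_Q$ equation, which has the $g,d$ structure, and then feed the result into the others.

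\emph{Completion.} I will define the subdivision defect $\mathcal S$ of $K\beta_P\beta_Q\beta_R$. By the exact cubic equation, $\mathcal S$ is of type $\lambda\beta\beta$. Then I repeat verbatim the two-cut identity \eqref{eq:two-cut-B}, the construction of $\Gamma$ and $\mathcal C$ in \eqref{eq:C-B}, and \eqref{eq:C-defect-B}, using flatness \eqref{eq:transport-transitive}. This gives an additive, cyclic $\cK_3=K\beta_P\beta_Q\beta_R-\mathcal C$, whose lower part defines $U_0$. Uniqueness of $U_0$ follows from the vanishing kernel. Finally, $\iota^*\cK_3=-\cK_3$ holds because every term of $\cK_3$ has degree three.
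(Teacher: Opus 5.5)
\paragraph{Assessment.} Your proposal has a genuine gap in the kernel step. The other two steps are sound.

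\paragraph{Cubic projection and completion.} Your cubic-projection step is correct, but it is argued differently from the paper. The paper multiplies the $v$-equation by $\Delta=\det(P_\#,Q_\#,R_\#)$, uses $\Delta(P,T,R)=U\Delta$ and $\Delta(T,Q,R)=S\Delta$, and applies the degree-zero classification to $\Delta v$. Your density-plus-cyclicity route also works, and it is easier than you fear. On $R$--$P$--$Q$, cyclicity reads $\frac{I(b+c)-I(b)}{\sinh c}=\frac{I(b)+I(c)}{\sinh(b+c)}$. Letting $c\to0$ gives $\sinh b\,\rho(b)=I(b)$, so $\rho=C/(1+\cosh r)$, with no singular branch to remove. (The common value is $\sinh a\sinh b\sin C$, not $\sinh C$.) Your completion step agrees with the paper.

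\paragraph{The gap in the kernel step.} Your claim that collapsing $x\to0$ gives $u=0$ on the faces is false.
\begin{enumerate}
\item In degree one the collapsing daughter enters with weight $g\to e^{c/2}/\cosh(c/2)\neq0$. Here the cut fermion $\beta_T\equiv S\beta_P+U\beta_Q$ contributes an extra factor. In the $\beta_P\beta_Q$ equation the collapsing daughter therefore enters with weight $gU$ (resp.\ $dS$), which tends to $0$.
\item In the other two equations it enters only through differences such as $u(x,m,b)-S\,u(b,x,m)$. So the limits $x\downarrow0$, $y\downarrow0$ give only trivial identities together with $u(0,t,t)=u(t,0,t)$.
\item The $\beta_P\beta_Q$ equation alone has an infinite-dimensional solution space. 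With $N(t)=e^{t/2}/(\sinh t\cosh(t/2))$ one has $gU=N(c)/N(x)$ and $dS=e^{-x}N(c)/N(y)$. It is literally the degree-four equation, solved by $u(a,b,c)=N(c)\int_0^c e^{-s}\sigma(r(s))\,ds$ for every continuous trace $\sigma(t)=u(t,t,0)$.
\end{enumerate}
So this sector is of density type, not of the degree-one type, and ``treating the $\beta_P\beta_Q$ equation first'' cannot give $u=0$. Moreover $u$ is not cyclic: $u(a,b,c)$, $u(b,c,a)$, $u(c,a,b)$ multiply different monomials. Hence there is no analogue of $f(t,t,0)=f(t,0,t)=f(0,t,t)=0$. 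The initial value $\psi(0)=0$, the Taylor expansion at a vanishing face, and the conclusion $A=B=0$ are all unavailable. Without the kernel, neither one-dimensionality nor uniqueness of $U_0$ follows.

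\paragraph{What is needed.} You need an argument that kills the trace $\sigma$ using the coupled second and third equations. The paper does this on the collinear configuration $(a,b,c)=(c,0,c)$. It expands the third equation as $x\downarrow0$ and $y\downarrow0$, and the second as $x\downarrow0$. This yields $\sigma'(0)=0$, an explicit formula for $L(c)=u(0,c,c)$, and finally $\sigma\equiv0$.

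A shorter repair also follows from your own ingredients.
\begin{enumerate}
\item Evaluate the face identity $u(t,0,t)=u(0,t,t)$ by the density formula at $R=P$ and $R=Q$. It reads $\int_0^te^{-s}\sigma(s)\,ds=e^{-t}\int_0^te^{s}\sigma(s)\,ds$.
\item Differentiating twice forces $\sigma$ to be constant, so $u=\sigma_0\operatorname{sech}^2(c/2)$.
\item The second homogeneous equation then becomes $\sigma_0\bigl[(1-d)\operatorname{sech}^2(a/2)-U(g-d)\operatorname{sech}^2(m/2)\bigr]=0$.
\item This forces $\sigma_0=0$, because $d<1$ and $a$ varies while $x,y,m$ are fixed.
\end{enumerate}
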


\begin{proof}
%For a cut \(T\in PQ\), retain the notation of Section \ref{sec:subdivision-calculus}.
%\[ a=d(Q,R),\qquad b=d(P,R),\qquad c=x+y=d(P,Q),\] \[ x=d(P,T),\qquad y=d(T,Q),\qquad m=d(T,R), \] and \[U=\frac{\sinh x}{\sinh c}, \qquad S=\frac{\sinh y}{\sinh c}. \]
Equating the four degree-three coefficients in
\(I_3^{PQR}=I_3^{PTR}+I_3^{TQR}\) using the full common-first transport
gives
\begin{align}
 v(a,b,c)
&=U\,v(m,b,x)+S\,v(m,y,a),
\label{eq:d3-v}\\
 u(a,b,c)
&=U\{g\,u(m,b,x)+\ell_Rv(m,b,x)\}
 +S\{d\,u(a,m,y)+r_Rv(m,y,a)\},
\label{eq:d3-u1}\\
 u(b,c,a)
&=gU\,u(b,x,m)+U\ell_Pv(m,b,x)\notag\\
&\qquad
 +d\{u(m,y,a)-Uu(y,a,m)\}
 +(r_T-Ur_Q)v(m,y,a),
\label{eq:d3-u2}\\
 u(c,a,b)
&=g\{u(x,m,b)-Su(b,x,m)\}
 +(\ell_T-S\ell_P)v(m,b,x)\notag\\
&\qquad
 +dS\,u(y,a,m)+Sr_Qv(m,y,a).
\label{eq:d3-u3}
\end{align}

\medskip
\noindent
\textit{Step 1: The cubic projection is one-dimensional.}
Let
\[
\Delta(P,Q,R)=\det(P_\#,Q_\#,R_\#)
\]
be the ordinary determinant in the hyperboloid model.  Since
$
T_\#=S P_\#+U Q_\#$,
multilinearity gives
\[
\Delta(P,T,R)=U\Delta(P,Q,R),
\qquad
\Delta(T,Q,R)=S\Delta(P,Q,R).
\]
Multiplying \eqref{eq:d3-v} by \(\Delta(P,Q,R)\) therefore shows that
\[
A(P,Q,R)=\Delta(P,Q,R)v(a,b,c)
\]
is an ordinary regular cyclic additive scalar invariant of the body
triangle.  It vanishes on a collapsed side because \(\Delta=0\) there.
The degree-zero argument, for which continuity already suffices, therefore
gives
\[
A(P,Q,R)=\kappa\,\Area(P,Q,R)
\]
for some constant \(\kappa\).  Hence the possible coefficients \(v\) form a
one-dimensional space.

In the side-normal body frame, we have
$
P_\#=(1,1,0),
~
Q_\#=(e^c,e^{-c},0)$,
and with the signed altitude convention above,
\[
\Delta(P,Q,R)=-2\sinh h_c\,\sinh c.
\]
After absorbing the fixed sign and scalar factor into \(\kappa\), we may take
the projected generator to be
\[
K(a,b,c)=\frac{2\Area(P,Q,R)}{\sinh h_c\,\sinh c}.
\]
The equivalent side-density formula is useful because it makes projected
additivity immediate.  

If \(F\) is the foot of the altitude from \(R\) to the geodesic \(PQ\),
and \(s\) is signed arclength along \(PQ\) measured from \(F\), then
for the point \(X_s\in PQ\), we find
$
\cosh d(X_s,R)=\cosh h_c\,\cosh s
$
by the hyperbolic right-triangle cosine law, so
\[
\int\frac{2\,ds}{1+\cosh h_c\cosh s}
=
\frac{4}{\sinh h_c}
\arctan\!\left(
\tanh\frac{h_c}{2}\tanh\frac{s}{2}
\right).
\]
Taking the difference at the two endpoints, or equivalently using the
ordinary right-triangle area formula, gives
\[
\int_{PQ}\frac{2\,ds}{1+\cosh d(X,R)}
=
\frac{2\Area(P,Q,R)}{\sinh h_c},
\]
so that
\[
K(a,b,c)
=
\frac1{\sinh c}
\int_{PQ}\frac{2\,ds}{1+\cosh d(X,R)}.
\]
Splitting the integral at \(T\) now gives directly
\[
K(a,b,c)=U K(m,b,x)+S K(m,y,a),
\]
which is precisely \eqref{eq:d3-v}.

\medskip
\noindent
\textit{Step 2: The kernel of the cubic projection is zero.}
If \(v=0\), then \eqref{eq:d3-u1}--\eqref{eq:d3-u3} reduce to
\[\begin{aligned}
 u(a,b,c)&=Ug\,u(m,b,x)+Sd\,u(a,m,y),\\
 u(b,c,a)&=gU\,u(b,x,m)
 +d\{u(m,y,a)-Uu(y,a,m)\},\\
 u(c,a,b)&=g\{u(x,m,b)-Su(b,x,m)\}
 +dS\,u(y,a,m).
\end{aligned}\]
The common-first factors are
\[
g=\frac{e^{y/2}\cosh(x/2)}{\cosh(c/2)},
\qquad
d=\frac{e^{-x/2}\cosh(y/2)}{\cosh(c/2)}.
\]
Set
\[
N(t)=\frac{e^{t/2}}{\sinh t\,\cosh(t/2)},
\qquad
F(a,b,c)=\frac{u(a,b,c)}{N(c)}.
\]
Since
\[
Ug=\frac{N(c)}{N(x)},
\qquad
Sd=e^{-x}\frac{N(c)}{N(y)},
\]
the first homogeneous equation becomes
\begin{equation}
F(a,b,c)=F(m,b,x)+e^{-x}F(a,m,y)
\label{eq:d3-weighted}
\end{equation}
in analogy to the degree-one case.

Define the collapsed-side trace
\[
\sigma(t)=u(t,t,0).
\]
If \(X_s\in PQ\) is the arclength parametrization from \(P\), put
\(r(s)=d(X_s,R)\).  Since
\[
N(h)=\frac1h+O(1),
\qquad h\downarrow0,
\]
a short segment of length \(h\) contributes
\[
F\bigl(d(X_{s+h},R),d(X_s,R),h\bigr)
=h\,\sigma(r(s))+o(h).
\]
Repeated application of \eqref{eq:d3-weighted} to a partition of \(PQ\)
and passage to the Riemann sum therefore gives
\begin{equation}
F(a,b,c)=\int_0^c e^{-s}\sigma(r(s))\,ds,
\qquad
u(a,b,c)=N(c)\int_0^c e^{-s}\sigma(r(s))\,ds,
\label{eq:d3-density}
\end{equation}
and it remains only to show that \(\sigma=0\).

With $
L(t)=u(0,t,t)$, taking
 \(R=Q\) in \eqref{eq:d3-density} gives
\begin{equation}
L(t)=N(t)\,e^{-t}\int_0^t e^s\sigma(s)\,ds.
\label{eq:d3-L-density}
\end{equation}
Now take the collinear triangle
\[
(a,b,c)=(c,0,c),
\]
so that \(R=P\).  For a cut with \(x+y=c\), one has \(m=x\).
The third homogeneous equation gives the exact identity
\begin{equation}
\sigma(c)
=
g\{\sigma(x)-S L(x)\}
+dS\,u(y,c,x).
\label{eq:d3-collinear}
\end{equation}

First, let \(x\downarrow0\), so \(y=c-x\), and put
\(\tau=\tanh(c/2)\).  Formulae
\eqref{eq:d3-density}--\eqref{eq:d3-L-density} give
\[
L(x)=\sigma(0)+\frac12\sigma'(0)x+o(x),
\]
\[
u(c-x,c,x)=\sigma(c)-\frac12\sigma'(c)x+o(x).
\]
Moreover,
$
S=1-x\coth c+o(x)$, and
\[
g=(1+\tau)\left(1-\frac x2\right)+o(x),
\qquad
d=1-\frac{1+\tau}{2}x+o(x).
\]
Comparing the coefficient of \(x\) in \eqref{eq:d3-collinear} yields
\begin{equation}
\sigma'(c)
=(1+\tau)\sigma'(0)
+2(1+\tau)\coth c\,\sigma(0)
-(1+\tau+2\coth c)\sigma(c).
\label{eq:d3-sigma-first}
\end{equation}
Since
\[
\tau=\frac c2+o(c),\qquad
\coth c=\frac1c+o(1),
\qquad
\sigma(c)=\sigma(0)+c\sigma'(0)+o(c),
\]
the right-hand side of \((51)\) tends to \(-\sigma'(0)\), anc
the left-hand side to \(\sigma'(0)\), so
$
\sigma'(0)=0$.

We next use the second homogeneous equation on the same collinear triangle.
As \(x\downarrow0\), the density formula gives
\[
\begin{aligned}
u(x,c-x,c)
={}&L(c)\\
&+x\Bigl[
L(c)+N(c)e^{-c}\sigma(0)-N(c)\sigma(c)
\Bigr]+o(x),
\end{aligned}
\]
while
\[
U=\frac{x}{\sinh c}+o(x),
\qquad
L(x)=\sigma(0)+o(1).
\]
Substitution in the second homogeneous equation and comparison of the
coefficient of \(x\) gives
\begin{equation}
L(c)
=
\frac{(3e^c+1)\sigma(c)-2(e^c+1)\sigma(0)}{\sinh c}.
\label{eq:d3-L}
\end{equation}

Finally, return to \eqref{eq:d3-collinear} and let \(y\downarrow0\), so
\(x=c-y\), whence
\[
S=\frac{y}{\sinh c}+o(y),
\qquad
g=1+\frac{1-\tau}{2}y+o(y),
\qquad
d=(1-\tau)+o(1),
\]
and \eqref{eq:d3-density} gives
\[
u(y,c,c-y)=L(c)+o(1).
\]
Comparing the coefficient of \(y\) gives
\begin{equation}
\sigma'(c)
=
\frac{1-\tau}{2}\sigma(c)
-
\frac{\tau}{\sinh c}L(c).
\label{eq:d3-sigma-second}
\end{equation}
Substituting \eqref{eq:d3-L} into \eqref{eq:d3-sigma-second} and comparing
with \eqref{eq:d3-sigma-first}, using \(\sigma'(0)=0\), gives
\[
(4e^{2c}+3e^c+1)\sigma(c)
=
4e^c(e^c+1)\sigma(0),
\]
so
\[
\sigma(c)
=
\sigma(0)
\frac{4e^c(e^c+1)}{4e^{2c}+3e^c+1}.
\]
Differentiating at \(c=0\) gives
$\sigma'(0)=\frac18\sigma(0)$.
Since \(\sigma'(0)=0\), we obtain \(\sigma(0)=0\); substituting this
back into the preceding explicit formula for \(\sigma(c)\) gives
$\sigma\equiv0$.
Formula \eqref{eq:d3-density} now gives
$
u\equiv0,$ as was claimed.

\medskip
\noindent
\textit{Step 3: The projected generator has a full additive completion.}
Set
\[
E_K^{PQR}=K(a,b,c)\,\beta_P\beta_Q\beta_R.
\]
For a cut \(T\in PQ\), define its full common-first subdivision defect by
\[
\begin{aligned}
\mathcal D_K(P,T,Q;R)
={}&E_K^{PQR}
-
\operatorname{Tr}_{PTR\to PQR}E_K^{PTR}\\
&-
\operatorname{Tr}_{TQR\to PQR}E_K^{TQR},
\end{aligned}
\]
where \(\operatorname{Tr}\) denotes the full common-first substitution from a
subtriangle to the parent triangle.  By Step 1, the cubic
\(\beta_P\beta_Q\beta_R\)-part cancels exactly, so
\(\mathcal D_K\) is of type \(\lambda\beta\beta\).  It is \(C^1\)-regular
in the cut parameter, and
\[
\mathcal D_K(P,P,Q;R)=
\mathcal D_K(P,Q,Q;R)=0
\]
as one-sided endpoint limits.

If \(P,T,U,Q\) occur in this order on \(PQ\), transport all terms to the
common-first coordinates of \(PQR\).  Associativity of the explicit
substitution formulas gives the two-cut identity
\[
\mathcal D_K(P,U,Q;R)+\mathcal D_K(P,T,U;R)
=
\mathcal D_K(P,T,Q;R)+\mathcal D_K(T,U,Q;R).
\]
This is exactly the same elementary identity used in the degree-two
completion.

Parametrize \(PQ\) by \(X_s\), \(0\le s\le c\).  For \(0\le r<t\le c\)
define
\[
\Gamma_K(r,t)
=
\left.
\frac{\partial}{\partial h}
\right|_{h=0^+}
\mathcal D_K(X_r,X_{r+h},X_t;R),
\]
and then
\[
\mathcal C_K(X_s,X_t,R)
=
\int_s^t
\operatorname{Tr}_{X_rX_tR\to X_sX_tR}
\Gamma_K(r,t)\,dr.
\]
The derivative exists by the stated \(C^1\) regularity and the endpoint
vanishing of \(\mathcal D_K\).  Differentiating the two-cut identity at the
collapsed daughter and applying the fundamental theorem of calculus, exactly
as in the degree-two proof, gives for \(s<u<t\)
\begin{equation}
\begin{aligned}
&\mathcal C_K(X_s,X_t,R)
-\mathcal C_K(X_s,X_u,R)
-\mathcal C_K(X_u,X_t,R)\\
&\qquad=
\mathcal D_K(X_s,X_u,X_t;R).
\end{aligned}
\label{eq:d3-correction-defect}
\end{equation}
It follows that
\begin{equation}
{
\cK_3^{PQR}=E_K^{PQR}-\mathcal C_K(P,Q,R)
}
\label{eq:K3-main}
\end{equation}
is additive under subdivision of \(PQ\).  The construction is equivariant
under the cyclic relabelling
\((P,Q,R)\mapsto(Q,R,P)\), so that \(\cK_3\) is cyclic, and its additivity for
the other two sides follows by cyclicity.  Since \(\mathcal C_K\) has type
\(\lambda\beta\beta\),we find
$
(\cK_3)_{\beta\beta\beta}=E_K$.
The integral defining \(\mathcal C_K\) vanishes on a collapsed marked side.
Writing its lower term cyclically defines the coefficient \(U_0\) appearing
in the proposition.

If two additive completions of \(E_K\) existed, their difference would have
zero cubic projection, and Step 2 would force that difference to vanish.
The additive completion is therefore unique. \end{proof}

\subsection{Degree four}
Continuity at the degenerate faces is enough to prove non-existence: additivity reduces the
coefficient to a one-dimensional boundary density, and cyclicity forces that
density to vanish.

\begin{proposition}[No degree-four generator]
\label{prop:degree-four-zero}
The only $C^0$-regular cyclic additive degree-four scalar invariant of hyperbolic spin
supertriangles vanishes.  
\end{proposition}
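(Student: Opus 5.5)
In degree four the only available monomial is $\lambda\beta_P\beta_Q\beta_R$, so I write the putative invariant as
\[
I_4^{PQR}=w(a,b,c)\,\lambda\beta_P\beta_Q\beta_R .
\]
Cyclicity of $\lambda$ (after fixing the spin lift) together with the fact that a cyclic permutation of $\beta_P,\beta_Q,\beta_R$ is even gives $w(a,b,c)=w(b,c,a)=w(c,a,b)$.

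\textbf{Step 1: the additivity equation.} Apply the full common-first transport of Section~\ref{sec:subdivision-calculus} to a cut $T\in PQ$. The $\lambda$-corrections to the second fermions are killed by the overall factor $\lambda$, so I only need $\lambda^\ell=g\lambda$, $\lambda^r=d\lambda$ and $\beta_T\equiv S\beta_P+U\beta_Q \pmod{\lambda}$. The left daughter $PTR$ contributes $g\,w(m,b,x)\,\lambda\beta_P(U\beta_Q)\beta_R$. The right daughter $TQR$ contributes $d\,w(a,m,y)\,\lambda(S\beta_P)\beta_Q\beta_R$, after ordering its vertices so that the monomial is written as $\beta_T\beta_Q\beta_R$. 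Equating coefficients gives
\[
w(a,b,c)=gU\,w(m,b,x)+dS\,w(a,m,y).
\]
With the weight $N$ from the degree-three argument this is exactly the weighted equation \eqref{eq:d3-weighted}: putting $F=w/N(c)$ gives $F(a,b,c)=F(m,b,x)+e^{-x}F(a,m,y)$.

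\textbf{Step 2: reduction to a boundary density.} Iterating along a partition of $PQ$ and passing to a Riemann sum gives, as in \eqref{eq:d3-density},
\[
w(a,b,c)=N(c)\int_0^c e^{-s}\sigma(r(s))\,ds,\qquad \sigma(t)=\lim_{h\downarrow 0} h\,w(\cdot,t,h)\ (\text{suitably normalized}).
\]
Since only $C^0$ regularity is assumed, I would phrase this more robustly. By additivity, $\Delta(P,Q,R)\,w\cdot(\text{weights})$ is a side-additive density. Continuity at the faces then makes each short-segment contribution $h\,\sigma+o(h)$ uniformly, which is enough for the Riemann sum. If $w$ itself is continuous up to $c=0$, the factor $N(h)\sim 1/h$ forces the contribution of a short segment to be $w(r,r,0)$ times $O(1)$, so additivity gives $w(t,t,0)=\ldots$. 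The cleanest route is the collapsing limit $x\to 0$: there $g\to e^{c/2}/\cosh(c/2)$, $U\to 0$ and $d,S\to 1$, so the term $gU\,w(b,b,\cdot)\to 0$ by continuity, and the equation is consistent. The real content therefore has to come from the density formula.

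\textbf{Step 3: cyclicity kills the density (main obstacle).} I would show that $\sigma=0$ by evaluating the density formula on the three degenerate faces and imposing cyclic invariance of $w$. For $R=P$, i.e.\ $(t,0,t)$, one has $r(s)=s$ and the formula gives $w(t,0,t)=N(t)\int_0^t e^{-s}\sigma(s)\,ds$. For $R=Q$, i.e.\ $(0,t,t)$, one has $r(s)=t-s$ and the formula gives $N(t)e^{-t}\int_0^t e^{s}\sigma(s)\,ds$. The face $(t,t,0)$ has $c=0$, so by cyclicity $w(t,t,0)$ equals both of these; since $w(t,t,0)$ is finite while $N(t)$ blows up as $t\to0$, this constrains $\sigma$ near $0$. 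Equating the first two expressions and differentiating in $t$ yields
\[
e^{-t}\sigma(t)=\sigma(t)+(\text{derivative terms}),
\]
an ODE which, together with finiteness at $t=0$, should force $\sigma\equiv0$. The density formula then gives $w\equiv0$ on non-degenerate triangles. The delicate point I expect is justifying the Riemann-sum limit under mere continuity. My first attempt is to avoid derivatives altogether by working with the integrated identities, differentiating only the explicit integrals in $t$ rather than $w$ itself.
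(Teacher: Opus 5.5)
Your Step 1 and the Riemann-sum reduction agree with the paper, but the closing step has a genuine gap.

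\textbf{The density is misnormalized.} Since $1/N(h)=h+O(h^2)$, continuity of $w$ at the face $c=0$ gives $F\bigl(r(s+h),r(s),h\bigr)=h\,w\bigl(r(s),r(s),0\bigr)+o(h)$. So the density is exactly the collapsed-side trace $\sigma(t)=w(t,t,0)$. It is not $\lim_{h\downarrow0}h\,w(\cdot,t,h)$, which vanishes for continuous $w$. The detour through $\Delta(P,Q,R)\,w$ is a leftover from the degree-three cubic projection and is not needed; uniform continuity at the face already gives the estimate for the Riemann sum.

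\textbf{Your Step 3 cannot force $\sigma=0$.} Because you never use $\sigma(t)=w(t,t,0)$, your plan rests only on equating the $(t,0,t)$ and $(0,t,t)$ evaluations plus ``finiteness at $t=0$''. Put $A(t)=\int_0^te^{-s}\sigma(s)\,ds$ and $B(t)=\int_0^te^{s}\sigma(s)\,ds$. The relation $A=e^{-t}B$ differentiates to $B(t)=(e^t-1)\sigma(t)$, hence $\sigma'=0$ for $t>0$, so every constant $\sigma\equiv K$ survives. The blow-up of $N$ gives nothing either, since $N(t)\int_0^te^{-s}\sigma(s)\,ds\to\sigma(0)$ automatically.

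Concretely, take $w=K\operatorname{sech}^2(c/2)$, i.e.\ $F=K(1-e^{-c})$. It satisfies your weighted equation, has density $K$, and agrees on the faces $(t,0,t)$ and $(0,t,t)$. It is excluded only by cyclicity against the face $(t,t,0)$, where its value $K$ differs from $K\operatorname{sech}^2(t/2)$ when $K\neq0$.

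\textbf{The missing step.} It is precisely that comparison, and it is how the paper concludes. Cyclicity gives $w(t,0,t)=w(0,t,t)=w(t,t,0)=\sigma(t)$, so the density formula becomes the closed system
\[
\sigma(t)=N(t)A(t)=N(t)e^{-t}B(t).
\]
From $A=e^{-t}B$ and its derivative one gets $A(t)=e^{-t}B(t)=(1-e^{-t})\sigma(t)$. Substituting into $\sigma=NA$ yields
\[
\sigma(t)=N(t)(1-e^{-t})\sigma(t)=\operatorname{sech}^2(t/2)\,\sigma(t).
\]
Since $\operatorname{sech}^2(t/2)<1$ for $t>0$, $\sigma$ vanishes there, and at $0$ by continuity. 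The density formula then gives $w\equiv0$.
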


\begin{proof}
Since there are exactly four odd common-first coordinates, every degree-four
scalar expression has the form
\[
I_4^{PQR}
=
\lambda\beta_P\beta_Q\beta_R\,G(a,b,c),
\]
where cyclicity  implies
$
G(a,b,c)=G(b,c,a)=G(c,a,b)$.

In the standard notation of Section \ref{sec:subdivision-calculus}
modulo terms proportional to $\lambda$, the cut-point second fermion is
$
\beta_T=S\beta_P+U\beta_Q$.
The omitted lower terms in the transported $\beta$'s make no contribution
here, because the degree-four monomial already contains the factor $\lambda$
and hence every such correction would contain $\lambda^2=0$.

For the left daughter $PTR$, we therefore obtain
\[
g\lambda\,\beta_P
   (S\beta_P+U\beta_Q)\beta_R
=
gU\,\lambda\beta_P\beta_Q\beta_R,
\]
while for the right daughter $QRT$, we find
\[
d\lambda\,\beta_Q\beta_R
   (S\beta_P+U\beta_Q)
=
dS\,\lambda\beta_P\beta_Q\beta_R.
\]
Additivity therefore gives
$
G(a,b,c)
=
gU\,G(m,b,x)+dS\,G(m,y,a)$,
and using cyclicity in the second term, this becomes
\[
G(a,b,c)
=
gU\,G(m,b,x)+dS\,G(a,m,y).
\]
Define the auxiliary
\[
N(t)=\frac{e^{t/2}}{\sinh t\,\cosh(t/2)},
\qquad
\widetilde G(a,b,c)=\frac{G(a,b,c)}{N(c)}.
\]
Since
\[
gU=\frac{N(c)}{N(x)},
\qquad
dS=e^{-x}\frac{N(c)}{N(y)},
\]
the preceding additivity equation is equivalent to
\begin{equation}\label{eq:d4-weighted}
\widetilde G(a,b,c)
=
\widetilde G(m,b,x)
+
e^{-x}\widetilde G(a,m,y).
\end{equation}

Let
$
\sigma(t)=G(t,t,0)
$
be the regular collapsed-side trace.  Fix a non-degenerate triangle,
parametrize
\[
X_s\in PQ,
\qquad d(P,X_s)=s,
\qquad 0\le s\le c,
\]
by arc length, and define
$
r(s)=d(X_s,R)$.
Since
$
N(h)=\frac1h+O(1)
~~{\rm as}~ (h\downarrow0),
$
continuity at the degenerate face gives
\[
\widetilde G\bigl(d(X_{s+h},R),d(X_s,R),h\bigr)
=
h\,\sigma(r(s))+o(h).
\]
Repeated application of \eqref{eq:d4-weighted} to a partition of $PQ$,
followed by passage to the corresponding Riemann sum, yields
\begin{equation}\label{eq:d4-density}
\widetilde G(a,b,c)
=
\int_0^c e^{-s}\sigma(r(s))\,ds,
\end{equation}
or equivalently
\[
G(a,b,c)
=
N(c)\int_0^c e^{-s}\sigma(r(s))\,ds.
\]

It remains only to determine the one-variable trace $\sigma$.  First, take the
degenerate triangle
$
(a,b,c)=(t,0,t)$,
for which $R=P$ and hence $r(s)=s$.  By cyclicity,
$G(t,0,t)=G(t,t,0)=\sigma(t)$, so \eqref{eq:d4-density} gives
\[
\sigma(t)
=
N(t)\int_0^t e^{-s}\sigma(s)\,ds.
\]
Next, take $(a,b,c)=(0,t,t)$,
for which $R=Q$ and $r(s)=t-s$.  Again by cyclicity,
$G(0,t,t)=\sigma(t)$, and therefore
\[
\sigma(t)
=
N(t)e^{-t}\int_0^t e^s\sigma(s)\,ds.
\]
Setting
\[
A(t)=\int_0^t e^{-s}\sigma(s)\,ds,
\qquad
B(t)=\int_0^t e^s\sigma(s)\,ds,
\]
the last two identities imply
$
A(t)=e^{-t}B(t)$.
Since $\sigma$ is continuous, the fundamental theorem of calculus applies.
Differentiating gives
$
e^{-t}\sigma(t)
=
-e^{-t}B(t)+\sigma(t)$,
and hence
\[
A(t)=e^{-t}B(t)
=(1-e^{-t})\sigma(t).
\]
Combining this with $\sigma(t)=N(t)A(t)$ gives
\[
\sigma(t)
=
N(t)(1-e^{-t})\sigma(t)
=
\operatorname{sech}^2(t/2)\,\sigma(t).
\]
For every $t>0$,
\(
\operatorname{sech}^2(t/2)<1
\), so
$
\sigma(t)=0$.
Continuity gives $\sigma(0)=0$ as well.  Finally,
\eqref{eq:d4-density} gives
$
\widetilde G\equiv0,
~
G\equiv0$,
and therefore
$
I_4\equiv0$, as required.
\end{proof}

\section*{Appendix: Examples and special families}
\label{sec:worked-examples-generators}

We illustrate the four generators on several families for which their
geometric meaning is especially transparent.  The first example displays the
spin involution itself; the second is the particularly simple locus
$\lambda=0$; the third exploits an isosceles body; and the last relates the
degree-two invariant $J$ to geodesics with ideal endpoints in the special
light cone.  Throughout, a spin lift is fixed unless explicitly stated
otherwise.

We use the common-first form
\[
P=((1-\lambda\beta_P)y_P\mid\lambda,\beta_P),\qquad
Q=((1-\lambda\beta_Q)y_Q\mid\lambda,\beta_Q),\qquad
R=((1-\lambda\beta_R)y_R\mid\lambda,\beta_R),
\]
with body side lengths
\[
a=d(Q_\#,R_\#),\qquad b=d(R_\#,P_\#),\qquad c=d(P_\#,Q_\#).
\]
The degree-two side corrections are
\[
\Delta a=-\lambda(\beta_Q+\beta_R)\tanh(a/2),
\]
\[
\Delta b=-\lambda(\beta_R+\beta_P)\tanh(b/2),
\qquad
\Delta c=-\lambda(\beta_P+\beta_Q)\tanh(c/2),
\]
and therefore
\[
J=d\Area(\Delta a,\Delta b,\Delta c).
\]

\begin{example}[Spin reversal]
\label{ex:spin-reversal}
{\rm Let \(\Delta^+\) be a spin supertriangle and \(\Delta^-\) the same
underlying supertriangle with the opposite spin lift.  Then
\[
\Area(\Delta^-)=\Area(\Delta^+),\qquad
J(\Delta^-)=J(\Delta^+),
\]
\[
\cB_2(\Delta^-)=\cB_2(\Delta^+),\qquad
\cK_3(\Delta^-)=-\cK_3(\Delta^+).
\]
Thus, the first three generators descend to ordinary supertriangles, while the
last one lives naturally on the spin double cover.  This is the simplest
concrete manifestation of the distinction between the two classification
problems.}
\end{example}

\begin{example}[The family \(\lambda=0\)]
\label{ex:lambda-zero}
{\rm If
$
\lambda=0$, then
\[
\Delta a=\Delta b=\Delta c=0,
\qquad
J=0.
\]
Moreover every lower term in the full common-first transport of the second
fermions contains a factor of \(\lambda\), so on every daughter triangle
arising in a subdivision, full transport agrees with projected transport.
The subdivision defects of the projected degree-two and degree-three
expressions therefore vanish identically, and the additive lifts are exactly
the projected expressions
\[
{
\cB_2
=
\beta_P\beta_Q\frac{c}{\sinh c}
+
\beta_Q\beta_R\frac{a}{\sinh a}
+
\beta_R\beta_P\frac{b}{\sinh b}
}
\]
and
\[
{
\cK_3
=
\beta_P\beta_Q\beta_R\,K(a,b,c),
}
\]
where
\[
K(a,b,c)
=
\frac1{\sinh c}
\int_{P_\#Q_\#}
\frac{2\,ds}{1+\cosh d(X,R_\#)}=
\frac{2\Area(P_\#,Q_\#,R_\#)}{\sinh h_c\,\sinh c}\,,
\]
where \(h_c\) denotes the signed body altitude from \(R_\#\) to the geodesic
\(P_\#Q_\#\). This family simultaneously displays the two new generators in their simplest
form.  Under spin reversal the quadratic expression \(\cB_2\) is unchanged,
whereas the cubic expression \(\cK_3\) changes sign.}
\end{example}

\begin{example}[An isosceles body]
\label{ex:isosceles-body}
{\rm Put the body side \(P_\#Q_\#\) in the standard position
\[
y_P=(1,1,0),\qquad
y_Q=(e^c,e^{-c},0),
\]
and place the third body vertex 
\[
y_R=
\bigl(\cosh h\,e^{c/2},\ \cosh h\,e^{-c/2},\ \sinh h\bigr),
\]
over the midpoint of this side, where $h>0$.
It follows that
\[
a=b=\ell,
\qquad
\cosh\ell=\cosh h\cosh(c/2).
\]
If \(A=B\) are the two base angles and \(C\) is the apex angle, then
\[
\cos A=\coth\ell\,\tanh(c/2),
\qquad
\cos C=
\frac{\cosh^2\ell-\cosh c}{\sinh^2\ell},
\]
so
\[
\Omega_{\ell,c}=\Area(P_\#,Q_\#,R_\#)
=\pi-2A-C.
\]

At the symmetric point \((a,b,c)=(\ell,\ell,c)\), write
\[
\Omega_\ell
=
\left.\frac{\partial\Omega}{\partial a}\right|_{a=b=\ell}
=
\left.\frac{\partial\Omega}{\partial b}\right|_{a=b=\ell},
\qquad
\Omega_c
=
\left.\frac{\partial\Omega}{\partial c}\right|_{a=b=\ell},
\]
so the defect-transgression generator is
\[
{
\begin{aligned}
J={}&-\lambda\,\Omega_\ell\tanh(\ell/2)
\bigl(\beta_P+\beta_Q+2\beta_R\bigr)\\
&-\lambda\,\Omega_c\tanh(c/2)
\bigl(\beta_P+\beta_Q\bigr).
\end{aligned}}
\]
The projected degree-two part is
\[
E_B
=
\beta_P\beta_Q\frac{c}{\sinh c}
+
\frac{\ell}{\sinh\ell}
\bigl(\beta_Q\beta_R+\beta_R\beta_P\bigr),
\]
while the projected cubic part is
\[
E_K
=
\beta_P\beta_Q\beta_R\,
\frac{2\Omega_{\ell,c}}{\sinh h\,\sinh c}.
\]
The reflection symmetry of the body is visible directly in all three
fermionic sectors.  For arbitrary \(\lambda\), the full invariants
\(\cB_2\) and \(\cK_3\) are obtained from these projected terms by the direct
subdivision-defect corrections of the main proof.  If in addition
\(\lambda=0\), those corrections vanish and the displayed \(E_B,E_K\) are
already the complete values of \(\cB_2,\cK_3\).}
\end{example}

\begin{example}[A supporting geodesic with special ideal endpoints]
\label{ex:special-light-cone-side}
{\rm Recall that the special light cone is
\[
\LLp_0=\osp\cdot e_0\subset\LLp,
\qquad
e_0=(1,0,0\mid0,0).
\]
Equivalently, it is the locus on which the fermion label $\theta$ of the light-cone
orbit vanishes.  More explicitly for
\(A=(x_1,x_2,y\mid\varphi,\psi)\in\LLp\), Proposition~2.5 of \cite{PZ} gives,
$
\theta(A)
=
\sqrt{x_1}\,\psi-\frac{y}{\sqrt{x_1}}\,\varphi
$
when \(x_1\) is invertible, and equivalently
$
\theta(A)
=
\sqrt{x_2}\,\varphi-\frac{y}{\sqrt{x_2}}\,\psi
$
when \(x_2\) is invertible, in each case up to the common fermionic sign.
Membership in \(\LLp_0\) is the intrinsic condition
\(\theta(A)=0\), and does not mean that the displayed odd coordinates vanish
in an arbitrary frame.

Suppose the two ideal endpoints of the supporting geodesic of the oriented
side \(PQ\) lie in \(\LLp_0\).  Since an independent ordered pair in
\(\LLp_0\) is classified by its pairing, cf.\ \cite{PZ}, after an
\(\osp\)-transformation preserving the chosen spin lift the two endpoint
vectors may be taken to be
\[
U=(2,0,0\mid0,0),
\qquad
V=(0,2,0\mid0,0),
\qquad
\pair{U}{V}=2.
\]
The supporting geodesic is therefore
\[
X_s=\frac12(e^sU+e^{-s}V)
=(e^s,e^{-s},0\mid0,0).
\]
Write the opposite vertex as
\[
R=(1-\alpha\beta)R_0\mid\alpha,\beta,
\]
with
\[
R_0=
\bigl(\cosh d\,e^\rho,\ \cosh d\,e^{-\rho},\ \sinh d\bigr),
\qquad
C=\cosh d,
\qquad
S=\sinh d.
\]
Set
\[
A_0=Ce^{-\rho},
\qquad
B_0=Ce^{\rho},
\qquad
c_s=\pair{X_s}{R_0}
=\frac12(e^sA_0+e^{-s}B_0)
=C\cosh(s-\rho).
\]
In this special-light-cone normal form, the endpoint fermions vanish.  The
local density for the degree-two transgression \(J\) therefore reduces to the
single opposite-vertex sector
\[
j(s)
=
-\alpha\beta\,
\frac{c_s+A_0B_0}{S(c_s+1)^2},
\]
which has the elementary potential
\[
\mathcal F(z)
=
-\alpha\beta\,
\frac{A_0z^2-B_0}
{S(A_0z^2+2z+B_0)},
\qquad z=e^s,
\]
with
\[
z\frac{d}{dz}\mathcal F(z)=j(s).
\]
Consequently, if \(P=X_{s_0}\) and \(Q=X_{s_1}\), then
\[
J(P,Q,R)
=
\mathcal F(e^{s_1})-\mathcal F(e^{s_0}).
\]
This example is useful for two reasons.  First, it shows that having
special-light-cone ideal endpoints does not force the fermionic defect
transgression to vanish: the odd data at the opposite vertex can still
contribute through the even product \(\alpha\beta\).  Second, spin reversal
sends \((\alpha,\beta)\mapsto(-\alpha,-\beta)\), so the product
\(\alpha\beta\), and therefore \(J\), is unchanged as required.

If all three supporting geodesics of the triangle have their ideal endpoints
in \(\LLp_0\), the same calculation applies separately to each oriented side.
One uses a separate special-light-cone normal form for each side; the resulting
three endpoint-difference formulas agree because \(J\) is an intrinsic cyclic
invariant.  No conclusion such as \(\lambda=0\) should be drawn from the
special-ended condition alone: that condition concerns the fermion labels of
the ideal endpoints of the supporting geodesics, whereas \(\lambda\) is the
common-first fermion of the finite spin supertriangle.}
\end{example}

\bibliographystyle{plain}
\bibliography{ref.bib}

@article{corrigendum,
	author = {Penner, Robert C.},
	journal = {Journal of Geometry and Physics},
	title = {Corrigendum to {\it 3d super hyperbolic geometry} with an Appendix correcting {\it Angle defect for super triangles}},
	year = {2026}}

@article{Klain2006HyperbolicValuations,
	author = {Klain, Daniel A.},
	doi = {10.1007/s00454-006-1251-6},
	journal = {Discrete \& Computational Geometry},
	number = {3},
	pages = {457--477},
	title = {Isometry-Invariant Valuations on Hyperbolic Space},
	volume = {36},
	year = {2006}}

@incollection{lambert,
	address = {Leipzig},
	author = {Lambert, Johann Heinrich},
	booktitle = {Die Theorie der Parallellinien von Euklid bis auf Gauss: Eine Urkundensammlung zur Vorgeschichte der nichteuklidischen Geometrie},
	editor = {Engel, Friedrich and St{\"a}ckel, Paul},
	note = {Written in 1766},
	pages = {152--208},
	publisher = {B. G. Teubner},
	title = {Theorie der Parallellinien},
	year = {1895}}

@article{Dehn1900,
	author = {Dehn, Max},
	journal = {Nachrichten von der K{\"o}niglichen Gesellschaft der Wissenschaften zu G{\"o}ttingen, Mathematisch-Physikalische Klasse},
	pages = {345--354},
	title = {Ueber raumgleiche Polyeder},
	year = {1900}}

@article{Zakharevich2016,
	author = {Zakharevich, Inna},
	doi = {10.1090/bull/1527},
	journal = {Bulletin of the American Mathematical Society},
	number = {2},
	pages = {269--294},
	title = {Perspectives on Scissors Congruence},
	volume = {53},
	year = {2016}}

@article{Hilbert1900,
	author = {Hilbert, David},
	doi = {10.1090/S0002-9904-1902-00923-3},
	journal = {Bulletin of the American Mathematical Society},
	number = {10},
	pages = {437--479},
	title = {Mathematical Problems},
	translator = {Newson, Mary Winston},
	volume = {8},
	year = {1902}}

@article{Sydler1965,
	author = {Sydler, Jean-Pierre},
	doi = {10.1007/BF02564364},
	journal = {Commentarii Mathematici Helvetici},
	pages = {43--80},
	title = {Conditions n{\'e}cessaires et suffisantes pour l'{\'e}quivalence des poly{\`e}dres de l'espace euclidien {\`a} trois dimensions},
	volume = {40},
	year = {1965}}

@article{3d,
	author = {Penner, Robert C.},
	doi = {10.1016/j.geomphys.2024.105171},
	journal = {Journal of Geometry and Physics},
	pages = {105171},
	title = {3d Super Hyperbolic Geometry},
	volume = {199},
	year = {2024}}

@article{IPZ,
	archiveprefix = {arXiv},
	author = {Ip, Ivan C. H. and Penner, Robert C. and Zeitlin, Anton M.},
	doi = {10.1016/j.aim.2018.07.019},
	eprint = {1605.08094},
	journal = {Advances in Mathematics},
	pages = {409--454},
	primaryclass = {math.GT},
	title = {{$\mathcal{N}=2$} super-{T}eichm{\"u}ller space},
	volume = {336},
	year = {2018}}

@article{PZ,
	archiveprefix = {arXiv},
	author = {Penner, R. C. and Zeitlin, Anton M.},
	doi = {10.4310/jdg/1552442609},
	eprint = {1509.06302},
	journal = {Journal of Differential Geometry},
	number = {3},
	pages = {527--566},
	primaryclass = {math.GT},
	title = {Decorated super-{T}eichm{\"u}ller space},
	volume = {111},
	year = {2019}}

@incollection{defect,
	address = {Cham},
	archiveprefix = {arXiv},
	author = {Penner, Robert C.},
	booktitle = {Essays on Geometry},
	doi = {10.1007/978-3-031-76257-4_6},
	editor = {Uluda\u{g}, A. Muhammed and Zeytin, Ayberk},
	eprint = {2208.06826},
	pages = {81--109},
	primaryclass = {math.GT},
	publisher = {Springer},
	title = {Angle Defect for Super Triangles},
	year = {2025}}

\end{document}